\DeclareMathOperator{\re}{Re}
\DeclareMathOperator{\im}{Im}
\DeclareMathOperator{\arccot}{arccot}
\newcommand{\mC}{{\mathds C}}
\newcommand{\mR}{{\mathds R}}
\newcommand{\mZ}{{\mathds Z}}
\newtheorem{theorem}{Theorem}[section]
\newtheorem{proposition}[theorem]{Proposition}
\newtheorem{lemma}[theorem]{Lemma}
\newtheorem{corollary}[theorem]{Corollary}
\theoremstyle{definition}
\newtheorem*{definition}{Definition}
\newtheorem*{example}{Example}
\newtheorem*{remark}{Remark}
\begin{document}

\title{Discrete Riemann surfaces based on quadrilateral cellular decompositions}

\author{Alexander I.~Bobenko\footnote{Institut f\"ur Mathematik, MA 8-3, Technische
Universit\"at Berlin, Stra{\ss}e des 17. Juni 136, 10623 Berlin, Germany.}$\;^{,1}$ \and Felix G\"unther$^{*,}$\footnote{European Post-Doctoral Institute for Mathematical Sciences: Institut des Hautes \'Etudes Scientifiques, 35 route de Chartres, 91440 Bures-sur-Yvette, France; Isaac Newton Institute for Mathematical Sciences, 20 Clarkson Road, Cambridge CB30EH, United Kingdom; Erwin Schr\"odinger International Institute for Mathematical Physics, Boltzmanngasse 9, 1090 Vienna, Austria}$\;^{,2}$}

\date{}
\maketitle
\footnotetext[1]{E-mail: bobenko@math.tu-berlin.de}
\footnotetext[2]{E-mail: fguenth@math.tu-berlin.de}

\begin{abstract}
\noindent
Our aim in this paper is to provide a theory of discrete Riemann surfaces based on quadrilateral cellular decompositions of Riemann surfaces together with their complex structure encoded by complex weights. Previous work, in particular of Mercat, mainly focused on real weights corresponding to quadrilateral cells having orthogonal diagonals. We discuss discrete coverings, discrete exterior calculus, and discrete Abelian differentials. Our presentation includes several new notions and results such as branched coverings of discrete Riemann surfaces, the discrete Riemann-Hurwitz Formula, double poles of discrete one-forms and double values of discrete meromorphic functions that enter the discrete Riemann-Roch Theorem, and a discrete Abel-Jacobi map.\\ \vspace{0.5ex}

\noindent
\textbf{2010 Mathematics Subject Classification:} 39A12; 30F30.\\ \vspace{0.5ex}

\noindent
\textbf{Keywords:} Discrete complex analysis, discrete Riemann surface, quad-graph, Riemann-Hurwitz formula, Riemann-Roch theorem, Abel-Jacobi map.
\end{abstract}

\raggedbottom
\setlength{\parindent}{0pt}
\setlength{\parskip}{1ex}


\section{Introduction}\label{sec:intro}

Linear theories of discrete complex analysis look back on a long and varied history. We refer here to the survey of Smirnov \cite{Sm10S}. Already Kirchhoff's circuit laws describe a discrete harmonicity condition for the potential function whose gradient describes the current flowing through the electric network. Discrete harmonic functions on the square lattice were studied by a number of authors in the 1920s, including Courant, Friedrichs, and Lewy \cite{CoFrLe28}. Two different notions for discrete holomorphicity on the square lattice were suggested by Isaacs \cite{Is41}. Dynnikov and Novikov studied a notion equivalent to one of them on triangular lattices in \cite{DN03}; the other was reintroduced by Lelong-Ferrand \cite{Fe44,Fe55} and Duffin \cite{Du56}. Duffin also extended the theory to rhombic lattices \cite{Du68}. Mercat \cite{Me01}, Kenyon \cite{Ke02}, and Chelkak and Smirnov \cite{ChSm11} resumed the investigation of discrete complex analysis on rhombic lattices or, equivalently, isoradial graphs.

Some two-dimensional discrete models in statistical physics exhibit conformally invariant properties in the thermodynamical limit. Such conformally invariant properties were established by Smirnov for site percolation on a triangular grid \cite{Sm01} and for the random cluster model \cite{Sm10}, by Chelkak and Smirnov for the Ising model \cite{ChSm12}, and by Kenyon for the dimer model on a square grid (domino tiling) \cite{Ke00}. In all cases, linear theories of discrete analytic functions on regular grids were important. The motivation for linear theories of discrete Riemann surfaces also comes from statistical physics, in particular, the Ising model. Mercat defined a discrete Dirac operator and discrete spin structures on quadrilateral decompositions and he identified criticality in the Ising model with rhombic quad-graphs \cite{Me01}. In \cite{Ci12}, Cimasoni discussed discrete Dirac operators and discrete spin structures on an arbitrary weighted graph isoradially embedded in a flat surface with conical singularities and how they can be used to give an explicit formula of the partition function of the dimer model on that graph. Also, he studied discrete spinors and their connection to s-holomorphic functions \cite{Ci15} that played an instrumental role in the universality results of Chelkak and Smirnov \cite{ChSm12}.

Important non-linear discrete theories of complex analysis involve circle packings or, more generally, circle patterns. Stephenson explains the links between circle packings and Riemann surfaces in \cite{Ste05}. Rodin and Sullivan proved that the Riemann mapping of a complex domain to the unit disk can be approximated by circle packings \cite{RSul87}. A similar result for isoradial circle patterns, even with irregular combinatorics, is due to B\"ucking \cite{Bue08}. In \cite{BoMeSu05} it was shown that discrete holomorphic functions describe infinitesimal deformations of circle patterns.

Mercat extended the linear theory from domains in the complex plane to discrete Riemann surfaces \cite{Me01b,Me01,Me07,Me08}. There, the discrete complex structure on a bipartite cellular decomposition of the surface into quadrilaterals is given by complex numbers $\rho_Q$ with positive real part. More precisely, the discrete complex structure defines discrete holomorphic functions by demanding that \[f(w_+)-f(w_-)=i\rho_Q\left(f(b_+)-f(b_-)\right)\] holds on any quadrilateral $Q$ with black vertices $b_-,b_+$ and white vertices $w_-,w_+$. Mercat focused on discrete complex structures given by real numbers in \cite{Me01b,Me01,Me07} and sketched some notions for complex $\rho_Q$ in \cite{Me08}. He introduced discrete period matrices \cite{Me01b, Me07}, their convergence to their continuous counterparts was shown in \cite{BoSk12}. In \cite{BoSk12}, also a discrete Riemann-Roch theorem was provided. Graph-theoretic analogues of the classical Riemann-Roch theorem and Abel-Jacobi theory were given by Baker and Norine \cite{BaNo07}.

A different linear theory for discrete complex analysis on triangulated surfaces using holomorphic cochains was introduced by Wilson \cite{Wi08}. Convergence of period matrices in that discretization to their smooth counterparts was also shown. A nonlinear theory of discrete conformality that discretizes the notion of conformally equivalent metrics was developed in \cite{BoPSp10}.

In \cite{BoG15}, a medial graph approach to discrete complex analysis on planar quad-graphs was suggested. Many results such as discrete Cauchy's integral formulae relied on discrete Stokes' Theorem~\ref{th:stokes} and Theorem~\ref{th:derivation} stating that the discrete exterior derivative is a derivation of the discrete wedge-product. These theorems turn out to be quite useful also in the current setting of discrete Riemann surfaces.

Our treatment of discrete differential forms on bipartite quad-decompositions of Riemann surfaces is close to what Mercat proposed in \cite{Me01b,Me01,Me07,Me08}. However, our version of discrete exterior calculus is based on the medial graph representation and is slightly more general. The goal of this paper is to present a comprehensive theory of discrete Riemann surfaces with complex weights $\rho_Q$ including discrete coverings, discrete exterior calculus, and discrete Abelian differentials. It includes several new notions and results including branched coverings of discrete Riemann surfaces, the discrete Riemann-Hurwitz Formula~\ref{th:Riemann_Hurwitz}, double poles of discrete one-forms and double values of discrete meromorphic functions that enter the discrete Riemann-Roch Theorem~\ref{th:Riemann_Roch}, and a discrete Abel-Jacobi map whose components are discrete holomorphic by Proposition~\ref{prop:Abel_holomorphic}.

The precise definition of a discrete complex structure will be given in Section~\ref{sec:basic}. Note that not all discrete Riemann surfaces can be realized as piecewise planar quad-surfaces, but these given by real weights $\rho_Q$ can, see Theorem~\ref{th:realization}. In Section~\ref{sec:holomap}, an idea how branch points of higher order can be modeled on discrete Riemann surfaces and a discretization of the Riemann-Hurwitz formula are given.

Since the notion of discrete holomorphic mappings developed in Section~\ref{sec:holomap} is too rigid to go further, we concentrate on discrete meromorphic functions and discrete one-forms. First, we shortly comment how the version of discrete exterior calculus developed in \cite{BoG15} generalizes to discrete Riemann surfaces in Section~\ref{sec:differentials}. The results of \cite{BoG15} that are relevant for the sequel are just stated, their proofs can be literally translated from \cite{BoG15}. Sometimes, we require in addition to a discrete complex structure local charts around the vertices of the quad-decomposition. Their existence is ensured by Proposition~\ref{prop:charts}. However, the definitions actually do not depend on the choice of charts.

In Section~\ref{sec:periods}, periods of discrete differentials are introduced and the discrete Riemann Bilinear Identity~\ref{th:RBI} is proven more or less in the same way as in the classical theory. Then, discrete harmonic differentials are studied in Section~\ref{sec:harmonic_holomorphic}. In Section~\ref{sec:Abelian_theory}, we recover the discrete period matrices of Mercat \cite{Me01b, Me07} and the discrete Abelian differentials of the first and the third kind of \cite{BoSk12} in the general setup of complex weights. Furthermore, discrete Abelian differentials of the second kind are defined. This leads to a slightly more general version of the discrete Riemann-Roch Theorem~\ref{th:Riemann_Roch}. Finally, discrete Abel-Jacobi maps and analogies to the classical theory are discussed in Section~\ref{sec:Abel}.


\section{Basic definitions}\label{sec:basic}

The aim of this section is to introduce discrete Riemann surfaces in Section~\ref{sec:setup}, giving piecewise planar quad-surfaces as an example in Section~\ref{sec:polyhedral}. There, we also discuss the question whether conversely discrete Riemann surfaces can be realized as piecewise planar quad-surfaces. The basic definitions are very similar to the notions in \cite{BoG15}, such as the medial graph introduced in Section~\ref{sec:medial}.


\subsection{Discrete Riemann surfaces}\label{sec:setup}

\begin{definition}
Let $\Sigma$ be a connected oriented surface without boundary, for short \textit{surface}. A \textit{bipartite quad-decomposition} $\Lambda$ \textit{of} $\Sigma$ is a strongly regular and locally finite cellular decomposition of $\Sigma$ such that all its 2-cells are quadrilaterals and its 1-skeleton is bipartite. Strong regularity requires that two different faces are either disjoint or share only one vertex or share only one edge; local finiteness requires that a compact subset of $\Sigma$ contains only finitely many quadrilaterals. If $\Sigma=\mC$ and $\Lambda$ is embedded in the complex plane such that all edges are straight line segments, then $\Lambda$ is called a \textit{planar quad-graph}.

Let $V(\Lambda)$ denote the set of 0-cells (\textit{vertices}), $E(\Lambda)$ the set of 1-cells (\textit{edges}), and $F(\Lambda)$ the set of 2-cells (\textit{faces} or \textit{quadrilaterals}) of $\Lambda$.
\end{definition}

In what follows, let $\Lambda$ be a bipartite quad-decomposition of the surface $\Sigma$.

\begin{definition}
We fix one decomposition of $V(\Lambda)$ into two independent sets and refer to the vertices of this decomposition as \textit{black} and \textit{white} vertices, respectively. Let $\Gamma$ be the graph defined on the black vertices where $vv'$ is an edge of $\Gamma$ if and only if its two black endpoints are vertices of a single face of $\Lambda$. Its dual graph $\Gamma^*$ is defined as the correponding graph on white vertices.
\end{definition}

\begin{remark}
The assumption of strong regularity guarantees that any edge of $\Gamma$ or $\Gamma^*$ is the diagonal of exactly one quadrilateral of $\Lambda$.
\end{remark}

\begin{definition}
$\Diamond:=\Lambda^*$ is the dual graph of $\Lambda$.
\end{definition}

\begin{definition}
If a vertex $v \in V(\Lambda)$ is a vertex of a quadrilateral $Q\in F(\Lambda)\cong V(\Diamond)$, then we write $Q \sim v$ or $v \sim Q$ and say that $v$ and $Q$ are \textit{incident} to each other.
\end{definition}

All faces of $\Lambda$ inherit an orientation from $\Sigma$. We may assume that the orientation on $\Sigma$ is chosen in such a way that the image of any orientation-preserving embedding of a two-cell $Q \in F(\Lambda)$ into the complex plane is positively oriented.

\begin{definition}
Let $Q \in F(\Lambda)$ with vertices $b_-,w_-,b_+,w_+$ in counterclockwise order, where $b_\pm \in V(\Gamma)$ and $w_\pm \in V(\Gamma^*)$. An orientation-preserving embedding $z_Q$ of $Q$ to a rectilinear quadrilateral in $\mC$ without self-intersections such that the image points of $b_-,w_-,b_+,w_+$ are vertices of the quadrilateral is called a \textit{chart} of $Q$. Two such charts are called \textit{compatible} if the oriented diagonals of the image quadrilaterals are in the same complex ratio \[\rho_Q:= -i \frac{w_+-w_-}{b_+-b_-}.\] Moreover, let $\varphi_Q:=\arccos\left(\re\left(i\rho_Q/|\rho_Q|\right)\right)$ be the angle under which the diagonal lines of $Q$ intersect.
\end{definition}

Note that $0<\varphi_Q<\pi$. Figure~\ref{fig:quadgraph} shows part of a planar quad-graph together with the notations we use for a single face $Q$ and the \textit{star of a vertex} $v$, i.e., the set of all faces incident to $v$.

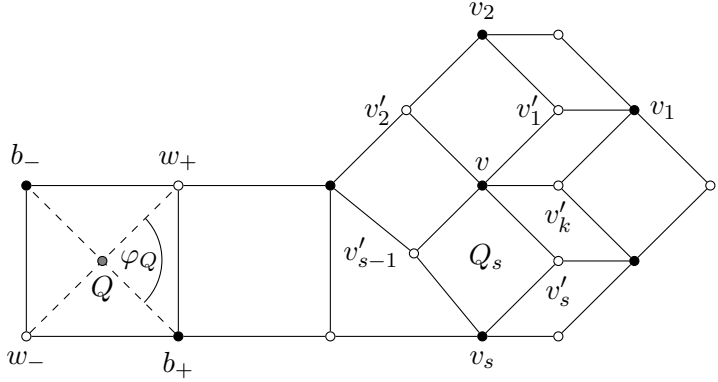
\begin{figure}[htbp]
\begin{center}
\beginpgfgraphicnamed{quad}
\begin{tikzpicture}
[white/.style={circle,draw=black,fill=white,thin,inner sep=0pt,minimum size=1.2mm},
black/.style={circle,draw=black,fill=black,thin,inner sep=0pt,minimum size=1.2mm},
gray/.style={circle,draw=black,fill=gray,thin,inner sep=0pt,minimum size=1.2mm}]
\node[white] (w1) [label=left:$v'_{s-1}$]
at (-0.9,-0.9) {};
\node[white] (w2) [label=below:$v'_s$]
 at (1,-1) {};
\node[white] (w3) [label=below:$v'_k$]
 at (1,0) {};
\node[white] (w4) [label=left:$v'_1$]
 at (1,1) {};
\node[white] (w5) [label=left:$v'_2$]
 at (-1,1) {};
\node[white] (w6) [label=above:$w_+$]
 at (-4,0) {};
\node[white] (w7)
 at (1,-2) {};
\node[white] (w8)
 at (3,0) {};
\node[white] (w9)
 at (1,2) {};
\node[white] (w10) [label=below:$w_-$]
 at (-6,-2) {};
\node[white] (w11)
 at (-2,-2) {};
\node[black] (b1) [label=above:$v$]
 at (0,0) {};
\node[black] (b2) [label=right:$v_1$]
 at (2,1) {};
\node[black] (b3)
 at (2,-1) {};
\node[black] (b4) [label=below:$v_s$]
 at (0,-2) {};
\node[black] (b5) [label=below:$b_+$]
 at (-4,-2) {};
\node[black] (b6) [label=above:$v_2$]
 at (0,2) {};
\node[black] (b7) [label=above :$b_-$]
 at (-6,0) {};
\node[black] (b8)
 at (-2,0) {};
\draw[color=white] (w1) --node[midway,color=black] {$Q_s$} (w2);
\draw (b2) -- (w9) -- (b6) -- (w5) -- (b8);
\draw (b4) -- (w7) -- (b3) -- (w8) -- (b2);
\draw (b8) -- (w1) --  (b4) --  (w2) -- (b3) -- (w3) -- (b2) -- (w4) -- (b6);
\draw (b1) -- (w1);
\draw (b1) -- (w2);
\draw (b1) -- (w3);
\draw (b1) -- (w4);
\draw (b1) -- (w5);
\draw (b8) -- (w11) -- (b4);
\draw (w6) -- (b5) -- (w10) -- (b7) -- (w6);
\draw (b5) -- (w11);
\draw (w6) -- (b8);
\draw[dashed] (b5) -- (b7);
\draw[dashed] (w10) -- (w6);
\node[gray] (z) [label=below:$Q$]
at (-5,-1) {};
\draw (-4.45,-1.55) arc (-45:43:0.8cm);
\coordinate[label=right:$\varphi_Q$] (phi) at (-4.9,-1);
\end{tikzpicture}
\endpgfgraphicnamed
\caption{Bipartite quad-decomposition with notations}
\label{fig:quadgraph}
\end{center}
\end{figure}

In addition, we denote by $\Diamond_0$ always a connected subgraph of $\Diamond$ and by $V(\Diamond_0)\subseteq V(\Diamond)$ the corresponding subset of faces of $\Lambda$. Through our identification $V(\Diamond)\cong F(\Lambda)$, we can call the elements of $V(\Diamond)$ quadrilaterals and identify them with the corresponding faces of $\Lambda$.

In particular, an equivalence class of charts $z_Q$ of a single quadrilateral $Q$ is uniquely characterized by the complex number $\rho_Q$ with a positive real part. An assignment of positive real numbers $\rho_Q$ to all faces $Q$ of $\Lambda$ was the definition of a discrete complex structure Mercat used in \cite{Me01}. In his subsequent work \cite{Me08}, he proposed a generalization to complex $\rho_Q$ with positive real part. Mercat's notion of a discrete Riemann surface is equivalent to the definition we give:

\begin{definition}
A \textit{discrete Riemann surface} is a triple $(\Sigma, \Lambda, z)$ of a bipartite quad-decomposition $\Lambda$ of a surface $\Sigma$ together with an \textit{atlas} $z$, i.e., a collection of charts $z_Q$ of all faces $Q \in F(\Lambda)$ that are compatible to each other. An assignment of complex numbers $\rho_Q$ with positive real part to the faces $Q$ of the quad-decomposition is said to be a \textit{discrete complex structure}.

$(\Sigma, \Lambda, z)$ is said to be \textit{compact} if the surface $\Sigma$ is compact.
\end{definition}

Note that real $\rho_Q$ correspond to quadrilaterals $Q$ whose diagonals are orthogonal to each other. They arise naturally if one considers a Delaunay triangulation of a polyhedral surface $\Sigma$ and places the vertices of the dual at the circumcenters of the triangles. Discrete Riemann surfaces based on this structure were investigated in \cite{BoSk12}. There, the above definition of a discrete Riemann surface was suggested as a generalization.

\begin{remark}
Compared to the classical theory, charts around vertices of $\Lambda$ are missing so far and were not considered by previous authors. In order to obtain definitions that can be immediately motivated from the classical theory, we will introduce such charts in our setting. However, we do not include them in the definition of a discrete Riemann surface. As it turns out, there always exist appropriate charts around vertices and besides discrete derivatives of functions on $V(\Diamond)$ all of our notions do not depend on these charts.
\end{remark}

\begin{definition}
Let $v \in V(\Lambda)$. An orientation-preserving embedding $z_v$ of the star of $v$ to the star of a vertex of a planar quad-graph $\Lambda'$ that maps vertices of $\Lambda$ to vertices of $\Lambda'$ is said to be a \textit{chart} as well. $z_v$ is said to be \textit{compatible} with the discrete complex structure of the discrete Riemann surface $(\Sigma,\Lambda,z)$ if for any quadrilateral $Q\sim v$ the induced chart of $z_v$ on $Q$ is compatible to $z_Q$.
\end{definition}
\begin{remark}
When we later speak about particular charts $z_v$, we always refer to charts compatible with the discrete complex structure.
\end{remark}

\begin{proposition}\label{prop:charts}
Let $\Lambda$ be a bipartite quad-decomposition of a Riemann surface $\Sigma$, and let the numbers $\rho_Q$, $Q\in V(\Diamond)$, define a discrete complex structure. Then, there exists an atlas $z$ such that the image quadrilaterals of charts $z_Q$ are parallelograms with the oriented ratio of diagonals equal to $i\rho_Q$ and such that for any $v \in V(\Lambda)$ there exists a chart $z_v$ compatible with the discrete complex structure.
\end{proposition}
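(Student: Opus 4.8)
The plan is to prove the two assertions separately, treating the global atlas of parallelogram charts $z_Q$ first and then the local vertex charts $z_v$. For the atlas I would simply prescribe, for each $Q\in F(\Lambda)$ with vertices $b_-,w_-,b_+,w_+$ in counterclockwise order, the chart $z_Q$ sending $b_\pm\mapsto\pm1$ and $w_\pm\mapsto\pm i\rho_Q$. Its image is the parallelogram with diagonals $[-1,1]$ and $[-i\rho_Q,i\rho_Q]$; because $\re\rho_Q>0$ these diagonals are not collinear, so the quadrilateral is nondegenerate and convex, and a one-line shoelace computation (the signed area comes out as $4\re\rho_Q>0$) shows that $-1,-i\rho_Q,1,i\rho_Q$ is positively oriented, so $z_Q$ is an orientation-preserving embedding. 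The oriented ratio of diagonals is $(i\rho_Q-(-i\rho_Q))/(1-(-1))=i\rho_Q$, i.e. $-i(w_+-w_-)/(b_+-b_-)=\rho_Q$, so each $z_Q$ realizes the prescribed structure with the required parallelogram shape; since the charts are prescribed face by face, they form an admissible atlas $z$.

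The substance lies in the vertex charts. Fix $v\in V(\Lambda)$, say black (the white case is identical), and let $Q_1,\dots,Q_n$ be the faces of the star of $v$ in counterclockwise order, with $Q_k$ carrying white vertices $w_k,w_{k+1}$ (shared cyclically, $w_{n+1}=w_1$) and opposite black vertex $b_k$. Building $z_v$ amounts to setting $v\mapsto0$ and choosing images $W_k=z_v(w_k)$ and $B_k=z_v(b_k)$ so that the quadrilaterals $(0,W_k,B_k,W_{k+1})$ are convex, positively oriented, pairwise non-overlapping, tile a full neighbourhood of $0$, and each realizes $\rho_{Q_k}$. Compatibility on each face pins the black image down in terms of the white ones: with the labelling dictated by the orientation it reads $i\rho_{Q_k}=(W_{k+1}-W_k)/B_k$, so $B_k=(W_{k+1}-W_k)/(i\rho_{Q_k})$ is determined once the $W_k$ are placed. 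Hence the only genuine freedom is the placement of the white vertices, which I would encode by the outgoing directions $\theta_k=\arg W_k$ — equivalently the opening angles $\alpha_k=\theta_{k+1}-\theta_k$ — together with the radii $r_k=|W_k|$. Flatness of the star is then exactly $\sum_k\alpha_k=2\pi$, and edge-consistency is automatic since each $W_k$ is a single point.

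The local input is a single-face lemma: for any $\varphi_{Q_k}=\arg(i\rho_{Q_k})\in(0,\pi)$ — this interval is precisely what $\re\rho_{Q_k}>0$ buys — and any opening angle $\alpha\in(0,\pi)$, one can choose the ratio $\tau=r_{k+1}/r_k>0$ so that $(0,W_k,B_k,W_{k+1})$ is convex, positively oriented, with the correct diagonal ratio. Normalizing $W_k=1$, $W_{k+1}=\tau e^{i\alpha}$, one computes $\arg B_k=\arg(\tau e^{i\alpha}-1)-\varphi_{Q_k}$; as $\tau$ runs over $(0,\infty)$ the term $\arg(\tau e^{i\alpha}-1)$ sweeps the whole interval $(\alpha,\pi)$, so $\arg B_k$ can be forced into the sector $(0,\alpha)$, and the complementary condition that $0$ and $B_k$ lie on opposite sides of the line $W_kW_{k+1}$ reduces to a quadratic inequality in $\tau$ whose solvability again uses $\re\rho_{Q_k}>0$. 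The upshot is that \emph{every} opening angle in $(0,\pi)$ is attainable by a convex face; this removes the obstruction that rigid realizations would impose, since parallelogram or isoceles charts force $\alpha_k>|\pi-2\varphi_{Q_k}|$ and generically fail to close up.

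The hard part will be the global closure: I must choose angles and radii so that simultaneously $\sum_k\alpha_k=2\pi$ and the radii close cyclically, $\prod_k(r_{k+1}/r_k)=1$, while keeping each face in its open convex range. Strong regularity gives $n\ge3$, so the angle constraint alone is feasible. I would then run the developing map and obtain closure by a continuity/degree argument: writing $\mu_k=\tau_k e^{i\alpha_k}$, the per-face admissible values of $\log\mu_k$ fill nonempty open regions whose imaginary parts cover $(0,\pi)$, and one shows that their Minkowski sum contains $2\pi i$, so that $\sum_k\log(r_{k+1}/r_k)$ can be driven to $0$ and the endpoint $W_{n+1}$ onto $W_1$ by the intermediate value theorem. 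I expect this to be the most delicate step, since the convex admissibility intervals for the length ratios must be shown wide enough to permit the required cancellation, and it is exactly here that the positive-real-part hypothesis is used decisively. Once closure holds, convex faces occupying disjoint angular sectors of a single full turn cannot overlap, so the configuration embeds; extending it to an honest planar quad-graph $\Lambda'$ by tiling the complement then yields the compatible chart $z_v$.
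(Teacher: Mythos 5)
Your construction of the face charts $z_Q$ matches the paper's, but the vertex charts are where the proposal breaks: your single-face lemma --- ``every opening angle in $(0,\pi)$ is attainable by a convex face'' --- is false, and the paper itself records the obstruction in the remark immediately following Proposition~\ref{prop:charts}. For a \emph{convex} quadrilateral with oriented diagonal ratio $i\rho$, $\rho$ real, the interior angle at a black vertex is at least $\arctan(|\rho|)$ (in the notation of Figure~\ref{fig:cosine}, that angle is $\arctan(f_1/e_1)+\arctan(f_2/e_1)\geq\arctan(f/e)=\arctan(|\rho|)$). So when $|\rho_{Q_k}|$ is large, opening angles $\alpha_k<\arctan(|\rho_{Q_k}|)$ are unattainable by convex faces, and if five or more faces with large $|\rho_{Q_s}|$ meet at $v$, convex faces cannot even have angle sum $2\pi$, so your plan of convex faces tiling a full turn is doomed on such surfaces. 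Your own computation exposes the failure: take $\rho$ real and large, so $\varphi=\pi/2$; forcing $\arg B_k\in(0,\alpha)$ confines $\tau$ to a window in which $|B_k|=|\tau e^{i\alpha}-1|/|\rho|$ is of order $\tan\alpha/|\rho|$, far too short to cross the chord $W_kW_{k+1}$, which stays at distance comparable to $|W_k|$ from the origin. Hence the ``quadratic inequality in $\tau$'' you invoke for the opposite-side condition has no solution; $\re\rho_{Q_k}>0$ controls only $\arg(i\rho_{Q_k})\in(0,\pi)$, not $|\rho_{Q_k}|$, and cannot rescue it. Since both the per-face lemma and your non-overlap argument (``convex faces occupying disjoint angular sectors cannot overlap'') lean on convexity, the strategy fails as stated; and independently, your global closure step --- that the Minkowski sum of the admissible $\log\mu_k$-regions contains $2\pi i$ --- is the crux of the whole argument and is only conjectured, not proven.

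The repair, and the paper's actual route, is to abandon convexity: a chart only requires simple, positively oriented quadrilaterals, so it is enough to keep $B_k$ strictly inside the open cone, whether it lands beyond the chord (convex face) or inside the triangle (a dart). The paper moreover sidesteps your two-constraint closure problem entirely by concentrating all flexibility in a single distinguished face: it fixes the angles in advance ($\alpha_1=\pi-\theta$ with $\theta<\varphi_{Q_1}<\pi-\theta$, and $\alpha_s=(\pi+\theta)/(k-1)$ for $s\neq 1$, summing to $2\pi$), constructs each $Q_s$, $s\neq 1$, inside its cone by intersecting the ray $t\,i\rho_{Q_s}$, $t>0$ --- which lies in the upper half-plane precisely because $\re\rho_{Q_s}>0$ --- with the circular arc of inscribed angle $\alpha_s$ over $[-1,1]$, glues the pieces by rotating and scaling, and fits the last face into the reserved cone by a one-dimensional intermediate-value argument for the diagonal intersection point $q$, using $\angle vv'_1v'_k<\theta<\varphi_{Q_1}<\pi-\theta<\pi-\angle v'_1v'_kv$. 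No degree argument, no multiplicative constraint on radii, and no convexity anywhere.
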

\begin{proof}
The construction of the charts $z_Q$ is simple: In the complex plane, the quadrilateral with black vertices $\pm 1$ and white vertices $\pm i\rho_Q$ is a parallelogram with the desired oriented ratio of diagonals.

In contrast, the construction of charts $z_v$ is more delicate. See Figure~\ref{fig:construction} for a visualization. Let us consider the star of a vertex $v\in V(\Gamma)$. If $v$ is white, then just replace $\rho_Q$ by $1/\rho_Q$ and $\varphi_Q$ by $\pi-\varphi_Q$ in the following. Let $Q_1,Q_2,\ldots,Q_k$ be the quadrilaterals incident to $v$.

We choose $0<\theta<\pi$ in such a way that $\theta<\varphi_{Q_1}<\pi-\theta$. Let $\alpha_1:= \pi-\theta$, and define $\alpha_s:= (\pi+\theta)/(k-1)$ for the other $s$. Then, all $\alpha_s$ sum up to $2\pi$.

First, we construct the images of $Q_s$, $s\neq 1$, starting with an auxiliary construction. As in Figure~\ref{fig:quadgraph}, let $v$, $v'_{s-1}$, $v_s$, $v'_{s}$ be the vertices of $Q_s$ in counterclockwise order. Then, we map $v'_{s-1}$ to $-1$ and $v'_{s}$ to $1$. All points $x$ that enclose an oriented angle $\alpha_s>0$ with $\pm 1$ lie on a circular arc above the real axis. Since the real part of $\rho_{Q_s}$ is positive, the ray $ti\rho_{Q_s}$, $t>0$, intersects this arc in exactly one point. If we choose the intersection point $x_v$ as the image of $v$ and $x_{v_s}:=x_v-2i\rho_{Q_s}$ as the image of $v_s$,then we get a quadrilateral in $\mC$ that has the desired oriented ratio of diagonals $i\rho_{Q_s}$. The quadrilateral is convex if and only if $x_v-2i\rho_{Q_s}$ has nonpositive imaginary part.

Now, we translate all the image quadrilaterals such that $v$ is always mapped to zero. By construction, the image of $Q_s$ is contained in a cone of angle $\alpha_s$. Thus, we can rotate and scale the images of $Q_s$, $s\neq 1$, in such a way that they do not overlap and that the images of edges $vv'_{s}$ coincide. Since all $\alpha_s$ sum up to $2\pi$, there is still a cone of angle $\alpha_1=\pi-\theta$ empty.

Let us identify the vertices $v$, $v'_k$, and $v'_1$ with their corresponding images and choose $q$ on the line segment $v'_kv'_1$. If $q$ approaches the vertex $v'_k$, then $\angle vqv'_k \to \pi - \angle v'_1v'_kv$, and if $q$ approaches $v'_1$, then $\angle vqv'_k \to \angle vv'_1v'_k$. Since \[\angle vv'_1v'_k<\theta<\varphi_{Q_1}<\pi-\theta<\pi - \angle v'_1v'_kv,\] there is a point $q$ on the line segment such that $\angle vqv'_k = \varphi_Q$. If we take the image of $v_1$ on the ray $tq$, $t>0$, such that its distance to the origin is $|v'_k-v'_1|/|\rho_{Q_1}|$, then we obtain a quadrilateral with the oriented ratio of diagonals $i\rho_{Q_1}$.
\end{proof}

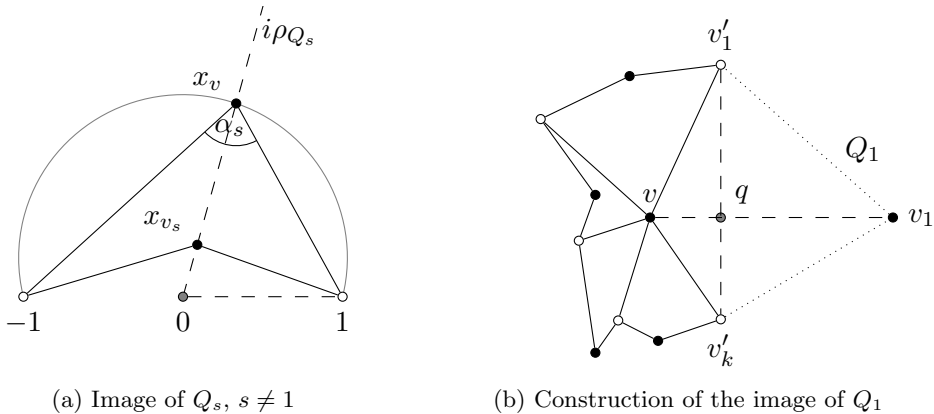
\begin{figure}[htbp]
   \centering
    \subfloat[Image of $Q_s$, $s\neq 1$]{
    \beginpgfgraphicnamed{construction1}
			\begin{tikzpicture}[white/.style={circle,draw=black,fill=white,thin,inner sep=0pt,minimum size=1.2mm},
black/.style={circle,draw=black,fill=black,thin,inner sep=0pt,minimum size=1.2mm},
gray/.style={circle,draw=black,fill=gray,thin,inner sep=0pt,minimum size=1.2mm},scale=0.7]
			\clip(-3.5,-1.5) rectangle (3.5,5.5);
			\draw [domain=0.0:17, dash pattern=on 5pt off 5pt] plot(\x,{(-0.0--3.66*\x)/1.0});
			\tkzDefPoint(-3,0){w1}\tkzDefPoint(1,3.66){b1}\tkzDefPoint(3,0){w2}
			\tkzCircumCenter(w1,b1,w2)
		  \tkzGetPoint{O}
			\tkzDrawArc(O,w2)(w1)
			\node[white] (w1) [label=below:$-1$] at (-3,0) {};
			\node[white] (w2) [label=below:$1$] at (3,0) {};
			\node[gray] (x) [label=below:$0$] at (0,0) {};
			\node[black] (b1) [label=above left:$x_v$] at (1,3.66) {};
			\node[black] (b2) [label=above left:$x_{v_s}$] at (0.2686098530106421,0.9831120620189502) {};
			\draw [dash pattern=on 5pt off 5pt] (x) -- (w2);
			\draw (w1)--(b1)--(w2)--(b2)--(w1);
			\draw (2,5) node {$i\rho_{Q_s}$};
			\draw (0.41,3.12) arc (222.46:298.65:0.8cm);
      \coordinate[label=right:$\alpha_{s}$] (alpha) at (0.4,3.2);
		\end{tikzpicture}
		\endpgfgraphicnamed}
		\qquad
		\subfloat[Construction of the image of $Q_1$]{
		\beginpgfgraphicnamed{construction2}
		\begin{tikzpicture}
		[white/.style={circle,draw=black,fill=white,thin,inner sep=0pt,minimum size=1.2mm},
black/.style={circle,draw=black,fill=black,thin,inner sep=0pt,minimum size=1.2mm},
gray/.style={circle,draw=black,fill=gray,thin,inner sep=0pt,minimum size=1.2mm},scale=0.75]
				\clip(2,-4) rectangle (10.8,2.3);
					\node[black] (b1) [label=right:$v_1$] at (9.82,-1.2) {};
					\node[black] (b2) [label=above:$v$] at (5.56,-1.2) {};
					\node[black] (b3) at (4.6,-0.8) {};
					\node[black] (b4) at (5.7,-3.38) {};
					\node[black] (b5) at (4.6,-3.59) {};
					\node[black] (b6) at (5.2,1.3) {};
					\node[white] (w1) [label=above:$v'_1$] at (6.8,1.5) {};
					\node[white] (w2) [label=below:$v'_{k}$] at (6.8,-3) {};
					\node[white] (w3) at (3.64,0.54) {};
					\node[white] (w4) at (5,-3.02) {};
					\node[white] (w5) at (4.31,-1.61) {};
					\node[gray] (q) [label=above right:$q$] at (6.8,-1.2) {};
					\coordinate[label=right:$Q_1$] (q1) at (8.8,0);
				\draw [dash pattern=on 5pt off 5pt] (b2)-- (b1);
				\draw [dash pattern=on 5pt off 5pt] (w1)-- (w2);
				\draw [dotted] (w2)-- (b1)--(w1);
				\draw (b2) -- (w1);
				\draw (b2) -- (w2);
				\draw (b2) -- (w3);
				\draw (b2) -- (w4);
				\draw (b2) -- (w5);
				\draw (w1) -- (b6) -- (w3);
				\draw (w3) -- (b3) -- (w5);
				\draw (w5) -- (b5) -- (w4);
				\draw (w4) -- (b4) -- (w2);	
			\end{tikzpicture}
		\endpgfgraphicnamed}
   \caption[]{Visualization of the proof of Proposition~\ref{prop:charts}}
   \label{fig:construction}
\end{figure}

\begin{remark}
Note that dependent on the discrete Riemann surface it could be impossible to find charts around vertex stars whose images consist of convex quadrilaterals only. Indeed, the interior angle at a black vertex $v$ of a convex quadrilateral with purely imaginary oriented ratio of diagonals $i\rho$ has to be at least $\arctan(|\rho|)=\pi/2-\arccot(|\rho|)$. In particular, the interior angles at $v$ of five or more incident convex quadrilaterals $Q_s$ such that $\rho_{Q_s}>\pi$ sum up to more than $2\pi$.
\end{remark}


\subsection{Piecewise planar quad-surfaces and discrete Riemann surfaces}\label{sec:polyhedral}

A polyhedral surface $\Sigma$ without boundary consists of Euclidean polygons glued together along common edges. Clearly, there are a lot of possibilities to make it a discrete Riemann surface. An essentially unique way to make a closed polyhedral surface a discrete Riemann surface is the following (see for example \cite{BoSk12}): The vertices of the (essentially unique) Delaunay triangulation are the black vertices and the centers of the circumcenters of the triangles are the white vertices (Figure~\ref{fig:DelaunayVoronoi}). The corresponding quadrilaterals possess isometric embeddings into the complex plane and form together a discrete Riemann surface. Note that all quadrilaterals are kites, corresponding to a discrete complex structure with real numbers $\rho_Q$ that are given by the so-called \textit{cotangent weights} \cite{PP93}. The corresponding cellular decomposition is called \textit{Delaunay-Voronoi quadrangulation}.

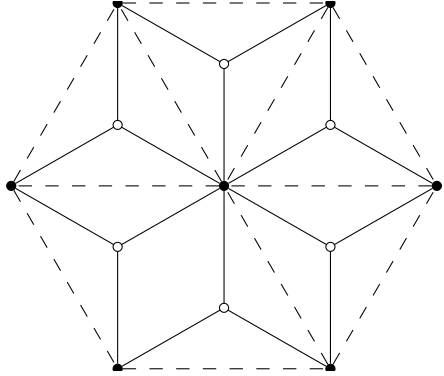
\begin{figure}[htbp]
\begin{center}
\beginpgfgraphicnamed{DelaunayVoronoi}
\begin{tikzpicture}
[white/.style={circle,draw=black,fill=white,thin,inner sep=0pt,minimum size=1.2mm},
black/.style={circle,draw=black,fill=black,thin,inner sep=0pt,minimum size=1.2mm},
gray/.style={circle,draw=black,fill=gray,thin,inner sep=0pt,minimum size=1.2mm},scale=0.7]
\clip(-4.1,-3.5) rectangle (4.1,3.5);
\node[black] (b1)
 at (-2,3.464) {};
\node[black] (b2)
 at (2,3.464) {};
\node[black] (b3)
 at (-4,0) {};
\node[black] (b4)
 at (0,0) {};
\node[black] (b5)
 at (4,0) {};
\node[black] (b6)
 at (-2,-3.464) {};
\node[black] (b7)
 at (2,-3.464) {};
\tkzSetUpPoint[fill=white,size=3mm]
\tkzCircumCenter(b1,b2,b4) \tkzGetPoint{w2}
\tkzCircumCenter(b1,b3,b4) \tkzGetPoint{w1}
\tkzCircumCenter(b5,b2,b4) \tkzGetPoint{w3}
\tkzCircumCenter(b3,b6,b4) \tkzGetPoint{w4}
\tkzCircumCenter(b6,b7,b4) \tkzGetPoint{w5}
\tkzCircumCenter(b7,b5,b4) \tkzGetPoint{w6}
\draw [dash pattern=on 5pt off 5pt] (b3)--(b1)-- (b2)-- (b5)-- (b4)-- (b3)-- (b6)-- (b7)-- (b4)--(b1);
\draw [dash pattern=on 5pt off 5pt] (b7)--(b5);
\draw [dash pattern=on 5pt off 5pt] (b2)--(b4);
\draw (w1)--(b1);
\draw (w1)--(b3);
\draw (w1)--(b4);
\draw (w2)--(b1);
\draw (w2)--(b2);
\draw (w2)--(b4);
\draw (w3)--(b5);
\draw (w3)--(b2);
\draw (w3)--(b4);
\draw (w4)--(b3);
\draw (w4)--(b6);
\draw (w4)--(b4);
\draw (w5)--(b6);
\draw (w5)--(b7);
\draw (w5)--(b4);
\draw (w6)--(b7);
\draw (w6)--(b5);
\draw (w6)--(b4);
\tkzDrawPoints(w1,w2,w3,w4,w5,w6)
\end{tikzpicture}
\endpgfgraphicnamed
\caption{Delaunay-Voronoi quadrangulation corresponding to a Delaunay triangulation}
\label{fig:DelaunayVoronoi}
\end{center}
\end{figure}

Let us suppose that the polyhedral surface $\Sigma$ is a piecewise planar quad-surface. Then, $\Sigma$ becomes a discrete Riemann surface in a canonical way. In the classical theory, any polyhedral surface possesses a canonical complex structure and any compact Riemann surface can be recovered from some polyhedral surface \cite{Bost92}. In the discrete setting, the situation is different.

\begin{theorem}\label{th:realization}
Let $(\Sigma,\Lambda,z)$ be a compact discrete Riemann surface.
\begin{enumerate}
\item If all numbers $\rho_Q$ of the discrete complex structure are real, then there exists a polyhedral surface consisting of rhombi such that its induced discrete complex structure is the one of $(\Sigma,\Lambda,z)$.
\item If all numbers $\rho_Q$ of the discrete complex structure are real but one is not, then there exists no piecewise planar quad-surface with the combinatorics of $\Lambda$ such that its induced discrete complex structure coincides with the one of $(\Sigma,\Lambda,z)$.
\end{enumerate}
\end{theorem}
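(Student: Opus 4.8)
The plan is to treat the two statements separately: (i) is a direct construction, while (ii) rests on a global cancellation argument built from a single metric identity.

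For (i) I would build the polyhedral surface face by face. A Euclidean rhombus has orthogonal diagonals, so its induced ratio is automatically real; conversely, if its black half-diagonal has length $p$ and its white half-diagonal length $r$, then an orientation-preserving isometric placement in $\mC$ gives $\rho_Q=r/p$, which ranges over all of $(0,\infty)$. Since a rhombus is equilateral and two faces sharing an edge of $\Lambda$ must agree on its length, connectedness forces every edge of $\Lambda$ to carry one common length, which I would simply fix to be $1$. For each face $Q$ I then take the unit rhombus whose black and white half-diagonals are $\cos\psi_Q$ and $\sin\psi_Q$ with $\psi_Q:=\arctan\rho_Q\in(0,\pi/2)$, and glue these rhombi isometrically along the edges of $\Lambda$. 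Because the gluing pattern is combinatorially the embedding $\Lambda\hookrightarrow\Sigma$ and all shared edges have equal length, this yields a flat, cone-singular polyhedral surface whose quad-decomposition is $\Lambda$ and whose induced ratios are exactly the prescribed $\rho_Q$.

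For (ii) the key observation is metric. For any planar quadrilateral with vertices $A,B,C,D$ in order one has the elementary identity
\[
|AB|^2+|CD|^2-|BC|^2-|DA|^2=-2\,(C-A)\cdot(D-B),
\]
so that the diagonals $AC$ and $BD$ are orthogonal precisely when the two sums of squares of opposite sides coincide; note that this needs no convexity. Applying it to a face $Q$ with vertices $b_-,w_-,b_+,w_+$ and using $w_+-w_-=i\rho_Q(b_+-b_-)$, I would record
\[
S_Q:=|b_-w_-|^2+|b_+w_+|^2-|w_-b_+|^2-|w_+b_-|^2=2\,\im(\rho_Q)\,|b_+-b_-|^2,
\]
where the lengths are the well-defined edge lengths of the putative piecewise planar quad-surface. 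Thus $S_Q=0$ exactly when $\rho_Q$ is real. Next I would assign to each edge $e\sim Q$ the sign $\sigma(e,Q)=+1$ if the boundary orientation of $Q$ runs along $e$ from its black to its white endpoint, and $-1$ otherwise. Traversing $\partial Q$ as $b_-\to w_-\to b_+\to w_+$ these signs alternate as $(+,-,+,-)$, so $S_Q=\sum_{e\sim Q}\sigma(e,Q)\,|e|^2$. Summing over all faces and interchanging the order of summation, each edge of $\Lambda$ is incident to exactly two faces which, since $\Sigma$ is oriented and closed, induce opposite orientations on it; hence the two contributions cancel and $\sum_Q S_Q=0$. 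On the other hand, under the hypothesis of (ii) every $S_Q$ vanishes except $S_{Q_0}\neq 0$, so $\sum_Q S_Q=S_{Q_0}\neq 0$ — a contradiction; compactness is what guarantees this is a finite sum.

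The main obstacle is organizing the sign bookkeeping so that the global sum telescopes: one must verify that the ``black-to-white'' reference orientation produces alternating signs $(+,-,+,-)$ around every face, which uses bipartiteness of $\Lambda$, and opposite signs across every shared edge, which uses orientability of $\Sigma$. The metric identity itself is robust, holding for non-convex quadrilaterals as well, so the only genuine geometric input beyond it is that a piecewise planar realization endows each edge with a single length shared by its two incident faces.
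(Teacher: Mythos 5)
Your proposal is correct and follows essentially the same route as the paper: part (i) is the same construction of unit rhombi with the angle chosen so that the diagonal ratio is $\rho_Q$, glued along the edges of $\Lambda$, and part (ii) is the same global cancellation of the per-face alternating sums of squared edge lengths, which vanish exactly for real $\rho_Q$ and telescope to zero over the closed oriented surface. The only cosmetic difference is that you derive the key identity $S_Q=-2(C-A)\cdot(D-B)=2\im(\rho_Q)\,|b_+-b_-|^2$ directly from the dot product, whereas the paper obtains $a^2-b^2+c^2-d^2=-2\cos(\varphi_Q)ef$ by four applications of the cosine rule; your version is marginally more robust since it avoids any implicit appeal to the diagonals intersecting inside the quadrilateral.
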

\begin{proof}
(i) The diagonals of a rhombus intersect orthogonally. Clearly, the oriented ratio of diagonals of a rhombus $Q$ is $i\rho_Q=i\tan\left(\alpha/2\right)$, where $\alpha$ denotes the interior angle at a black vertex. Choosing $\alpha=2\arctan(\rho_Q)$ gives a rhombus with the desired oriented ratio of diagonals. If all the side lengths of the rhombi are one, then we can glue them together to obtain the desired closed polyhedral surface.

(ii) For a chart $z_Q$ of $Q\in V(\Diamond)$, consider the image $z_Q(Q)$. We denote the lengths of its edges by $a,b,c,d$ in counterclockwise order, starting with an edge going from a black to a white vertex, and the lengths of the line segments connecting the vertices with the intersection of the diagonal lines by $e_1,e_2,f_1,f_2$ as in Figure~\ref{fig:cosine}.

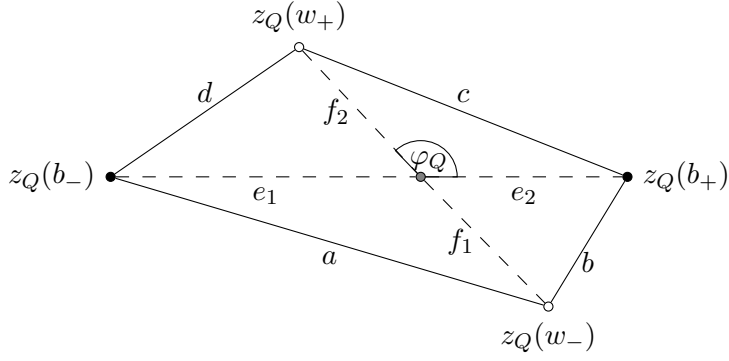
\begin{figure}[htbp]
   \centering
		\beginpgfgraphicnamed{cosine}
		\begin{tikzpicture}
		[white/.style={circle,draw=black,fill=white,thin,inner sep=0pt,minimum size=1.2mm},
black/.style={circle,draw=black,fill=black,thin,inner sep=0pt,minimum size=1.2mm},
gray/.style={circle,draw=black,fill=gray,thin,inner sep=0pt,minimum size=1.2mm},scale=4]
					\draw [shift={(1.02,0.5)}] (0,0) -- (-0.13:0.12) arc (-0.13:133.79:0.12) -- cycle;
					\node[white] (w1) [label=below:$z_Q(w_-)$] at (1.44,0.07) {};
					\node[white] (w2) [label=above:$z_Q(w_+)$] at (0.62,0.93) {};
					\node[black] (b1) [label=left:$z_Q(b_-)$] at (0,0.5) {};
					\node[black] (b2) [label=right:$z_Q(b_+)$] at (1.7,0.5) {};
					\draw (b1)--node[midway,above] {$d$} (w2);
					\draw (w2)--node[midway,above] {$c$} (b2);
					\draw (b2)--node[midway,below] {$b$} (w1);
					\draw (w1)--node[midway,below] {$a$} (b1);
					\node[gray] (q) at (1.02,0.5) {};
					\draw [dash pattern=on 5pt off 5pt] (w2)--node[midway,left] {$f_2$}(q)--node[midway,left] {$f_1$} (w1);
					\draw [dash pattern=on 5pt off 5pt] (b1)--node[midway,below] {$e_1$}(q)--node[midway,below] {$e_2$} (b2);
					\coordinate[label=right:$\varphi_Q$] (phi) at (0.95,0.55);
			\end{tikzpicture}
		\endpgfgraphicnamed
   \caption{Illustration of the formula $a^2-b^2+c^2-d^2=-2\cos(\varphi_Q)ef$}
   \label{fig:cosine}
\end{figure}

Cosine theorem implies \begin{align*}
a^2&=e_1^2+f_1^2-2e_1f_1\cos(\varphi_Q),\\
b^2&=e_2^2+f_1^2+2e_2f_1\cos(\varphi_Q),\\
c^2&=e_2^2+f_2^2-2e_2f_2\cos(\varphi_Q),\\
d^2&=e_1^2+f_2^2+2e_1f_2\cos(\varphi_Q).
\end{align*}
Taking the alternating sum, we get \[a^2-b^2+c^2-d^2=-2\cos(\varphi_Q)(e_1f_1+e_2f_1+e_2f2+e_1f_2)=-2\cos(\varphi_Q)ef,\] where $e:=e_1+e_2$ and $f:=f_1+f_2$ are the lengths of the two diagonals. In particular, $\varphi_Q=\pi/2$ if and only if $a^2-b^2+c^2-d^2=0$.

Suppose there is a piecewise planar quad-surface with the combinatorics of $\Lambda$ such that its induced discrete complex structure is the one of $(\Sigma,\Lambda,z)$. Let us orient all edges from the white to its black endpoint. For each quadrilateral $Q$ we consider its alternating sum of edge lengths such that the sign in front of an edge that is oriented in counterclockwise direction is positive and negative otherwise. If we sum these sums up for all $Q\in V(\Diamond)$, then each edge length appears twice with different signs, so the sum is zero. On the other hand, for all but one $Q$ $\rho_Q$ is real and the remaining one is not, so the sum is nonzero, contradiction. Thus, there cannot exist such a piecewise planar quad-surface.   
\end{proof}


\subsection{Medial graph} \label{sec:medial}

\begin{definition}
The \textit{medial graph} $X$ of the bipartite quad-decomposition $\Lambda$ of the surface $\Sigma$ is defined as the following cellular decomposition of $\Sigma$. Its vertex set is given by all the midpoints of edges of $\Lambda$, and two vertices $x,x'$ are adjacent if and only if the corresponding edges belong to the same face $Q$ of $\Lambda$ and have a vertex $v\in V(\Lambda)$ in common. We denote this edge (or 1-cell) by $[Q,v]$. A \textit{face} (or 2-cell) $F_v$ of $X$ corresponding to $v\in V (\Lambda)$ shall have the edges of $\Lambda$ incident to $v$ as vertices, and a \textit{face} (or 2-cell) $F_Q$ of $X$ corresponding to $Q\in F(\Lambda)\cong V(\Diamond)$ shall have the four edges of $\Lambda$ belonging to $Q$ as vertices. In Figure~\ref{fig:medial}, the vertices of the medial graph are colored gray. In this sense, the set $F(X)$ of \textit{faces} of $X$ is defined and in bijection with $V(\Lambda)\cup V(\Diamond)$.
\end{definition}

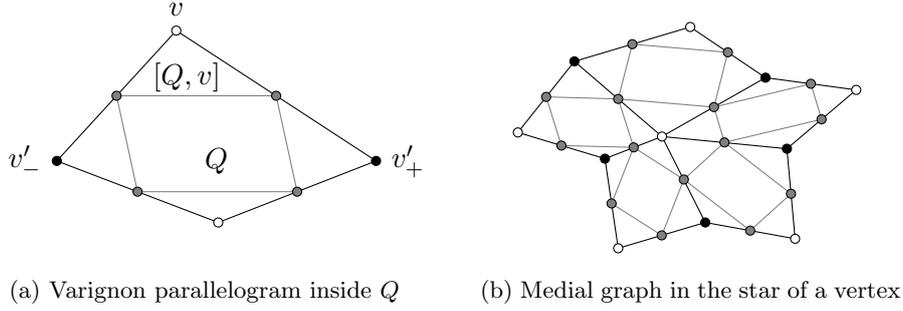
\begin{figure}[htbp]
   \centering
    \subfloat[Varignon parallelogram inside $Q$]{
    \beginpgfgraphicnamed{medial1}
			\begin{tikzpicture}[white/.style={circle,draw=black,fill=white,thin,inner sep=0pt,minimum size=1.2mm},
black/.style={circle,draw=black,fill=black,thin,inner sep=0pt,minimum size=1.2mm},
gray/.style={circle,draw=black,fill=gray,thin,inner sep=0pt,minimum size=1.2mm},scale=0.6]
			\clip(-1.7,-6.5) rectangle (7.4,-0.2);
			\draw (-0.6,-4.16)-- (2.02,-1.28);
			\draw (2.02,-1.28)-- (6.4,-4.16);
			\draw (6.4,-4.16)-- (2.94,-5.52);
			\draw (2.94,-5.52)-- (-0.6,-4.16);
			\draw [color=gray] (1.17,-4.84)-- (0.71,-2.72);
			\draw [color=gray] (0.71,-2.72)-- (4.21,-2.72);
			\draw [color=gray] (4.21,-2.72)-- (4.67,-4.84);
			\draw [color=gray] (4.67,-4.84)-- (1.17,-4.84);
			\node[white] (w1) at (2.94,-5.52) {};
			\node[white] (w2) [label=above:$v$] at (2.02,-1.28) {};
			\node[black] (b1) [label=left:$v'_{-}$] at (-0.6,-4.16) {};
			\node[black] (b2) [label=right:$v'_{+}$] at (6.4,-4.16) {};
			\draw (2.9,-4.16) node {$Q$};
			\node[gray] (m1) at (0.71,-2.72) {};
			\node[gray] (m2) at (4.21,-2.72) {};
			\node[gray] (m3) at (4.67,-4.84) {};
			\node[gray] (m4) at (1.17,-4.84) {};
			\draw (2.25,-2.3) node {$[Q,v]$};
		\end{tikzpicture}
		\endpgfgraphicnamed}
		\qquad
		\subfloat[Medial graph in the star of a vertex]{
		\beginpgfgraphicnamed{medial2}
		\begin{tikzpicture}
		[white/.style={circle,draw=black,fill=white,thin,inner sep=0pt,minimum size=1.2mm},
black/.style={circle,draw=black,fill=black,thin,inner sep=0pt,minimum size=1.2mm},
gray/.style={circle,draw=black,fill=gray,thin,inner sep=0pt,minimum size=1.2mm},scale=0.6]
	\clip(2,-4) rectangle (10.8,2);
				\draw (2.4,-1.03)-- (3.64,0.54);
				\draw (3.64,0.54)-- (6.18,1.3);
				\draw (3.64,0.54)-- (5.56,-1.12);
				\draw (5.56,-1.12)-- (4.31,-1.61);
				\draw (4.31,-1.61)-- (2.4,-1.03);
				\draw (4.31,-1.61)-- (4.6,-3.59);
				\draw (9.82,-0.09)-- (7.83,0.18);
				\draw (9.82,-0.09)-- (8.3,-1.39);
				\draw (7.83,0.18)-- (5.56,-1.12);
				\draw (8.3,-1.39)-- (5.56,-1.12);
				\draw (7.83,0.18)-- (6.18,1.3);
				\draw (5.56,-1.12)-- (6.51,-3.02);
				\draw (6.51,-3.02)-- (4.6,-3.59);
				\draw (6.51,-3.02)-- (8.48,-3.38);
				\draw (8.48,-3.38)-- (8.3,-1.39);
				\draw [color=gray] (4.94,-1.36)-- (4.6,-0.29);
				\draw [color=gray] (4.6,-0.29)-- (6.7,-0.47);
				\draw [color=gray] (6.7,-0.47)-- (6.93,-1.25);
				\draw [color=gray] (6.93,-1.25)-- (6.04,-2.07);
				\draw [color=gray] (6.04,-2.07)-- (4.94,-1.36);
					\node[white] (w1) at (9.82,-0.09) {};
					\node[white] (w2) at (5.56,-1.12) {};
					\node[white] (w3) at (2.4,-1.03) {};
					\node[white] (w4) at (8.48,-3.38) {};
					\node[white] (w5) at (4.6,-3.59) {};
					\node[white] (w6) at (6.18,1.3) {};
					\node[black] (b1) at (7.83,0.18) {};
					\node[black] (b2) at (8.3,-1.39) {};
					\node[black] (b3) at (3.64,0.54) {};
					\node[black] (b4) at (6.51,-3.02) {};
					\node[black] (b5) at (4.31,-1.61) {};
					\draw [color=gray] (7.495,-3.2) --(8.39,-2.385)--(6.93,-1.25)--(9.06,-0.74)--(8.825,0.045)--(6.7,-0.47)--(7.005,0.74) --(4.91,0.92)--(4.6,-0.29)--(3.02,-0.245)--(3.355,-1.32)--(4.94,-1.36)--(4.455,-2.6)--(5.555,-3.305)--(6.04,-2.07)--(7.495,-3.2);				
					\node[gray] (m1) at (6.93,-1.25) {};
					\node[gray] (m2) at (6.7,-0.47) {};
					\node[gray] (m3) at (4.6,-0.29) {};
					\node[gray] (m4) at (4.94,-1.36) {};
					\node[gray] (m5) at (6.04,-2.07) {};
					\node[gray] (m6) at (7.495,-3.2) {};
					\node[gray] (m7) at (8.39,-2.385) {};
					\node[gray] (m8) at (9.06,-0.74) {};
					\node[gray] (m9) at (8.825,0.045) {};
					\node[gray] (m10) at (7.005,0.74) {};
					\node[gray] (m11) at (4.91,0.92) {};
					\node[gray] (m12) at (3.02,-0.245) {};
					\node[gray] (m13) at (3.355,-1.32) {};
					\node[gray] (m14) at (4.455,-2.6) {};
					\node[gray] (m15) at (5.555,-3.305) {};
			\end{tikzpicture}
		\endpgfgraphicnamed}
   \caption[]{Medial graph $X$ and notation of its edges}
   \label{fig:medial}
\end{figure}

A priori, $X$ is just a combinatorial datum, giving a cellular decomposition of $\Sigma$ with induced orientation. But charts $z_v$ and $z_Q$ induce geometric realizations of the faces $F_v$ and $F_Q$ corresponding to $v\in V(\Lambda)$ and $Q\in V(\Diamond)$, respectively, in the complex plane. For this, we identify vertices of $X$ with the midpoints of the images of corresponding edges and map the edges of $X$ to straight line segments. $z_Q$ always induces an orientation-preserving embedding, $z_v$ does if it maps the quadrilaterals of the star of $v$ to quadrilaterals whose interior angle at $z_v(v)$ is less than $\pi$. Due to Varignon's theorem, $z_Q(F_Q)$ is a parallelogram, even if $z_Q(Q)$ is not. Also, the image of the oriented edge $e=[Q,v]$ of $X$ connecting the edges $vv'_-$ with $vv'_+$ is just half the image of the diagonal: $2z_Q(e)=z_Q(v'_+)-z_Q(v'_-)$. In this sense, $e$ is \textit{parallel} to the edge $v'_-v'_+$ of $\Gamma$ or $\Gamma^*$.

\begin{definition}
We call an edge of $X$ \textit{black} or \textit{white} if it is parallel to an edge of $\Gamma$ or $\Gamma^*$, respectively.
\end{definition}

\begin{remark}
Even if $z_v$ does not induce an orientation-preserving embedding of $F_v$, we still obtain a rectilinear polygon $z_v(F_v)$ by the construction described above. In particular, the algebraic area of $z_v(F_v)$ is defined, where the orientation of $z_v(F_v)$ is inherited from the orientation of the star of $v$ on $\Sigma$.
\end{remark}

\begin{definition}
For a connected subgraph $\Diamond_0 \subseteq \Diamond$, we denote by $\Lambda_0$ the subgraph of $\Lambda$ whose vertices and edges are exactly the vertices and edges of the quadrilaterals in $V(\Diamond_0)$. An \textit{interior} vertex $v\in V(\Lambda_0)$ is a vertex such that all incident faces in $\Lambda$ belong to $V(\Diamond_0)$. All other vertices of $\Lambda_0$ are said to be \textit{boundary vertices}. Let $\Gamma_0$ and $\Gamma_0^*$ denote the subgraphs of $\Gamma$ and of $\Gamma^*$ whose edges are exactly the diagonals of quadrilaterals in $V(\Diamond_0)$.

$\Diamond_0\subseteq\Diamond$ is said to \textit{form a simply-connected closed region} if the union of all quadrilaterals in $V(\Diamond_0)$ forms a simply-connected closed region in $\Sigma$.

Furthermore, we denote by $X_0 \subseteq X$ the connected subgraph of $X$ consisting of all edges $[Q,v]$ where $Q\in V(\Diamond_0)$ and $v$ is a vertex of $Q$. For a finite collection $F$ of faces of $X_0$, $\partial F$ denotes the union of all counterclockwise oriented boundaries of faces in $F$, where oriented edges in opposite directions cancel each other out. 
\end{definition}


\section{Discrete holomorphic mappings}\label{sec:holomap}

Throughout this section, let $(\Sigma, \Lambda, z)$ and $(\Sigma', \Lambda', z')$ be discrete Riemann surfaces.


\subsection{Discrete holomorphicity}\label{sec:Cauchy_Riemann}

The following notion of discrete holomorphic functions is essentially due to Mercat \cite{Me01,Me07,Me08}.

\begin{definition}
Let $f:V(\Lambda_0) \to \mC$. $f$ is said to be \textit{discrete holomorphic} if the \textit{discrete Cauchy-Riemann equation} \[\frac{f(b_+)-f(b_-)}{z_Q(b_+) - z_Q(b_-)}=\frac{f(w_+)-f(w_-)}{z_Q(w_+) - z_Q(w_-)}\] is satisfied for all quadrilaterals $Q \in V(\Diamond_0)$ with vertices $b_-,w_-,b_+,w_+$ in counterclockwise order, starting with a black vertex. $f$ is \textit{discrete antiholomorphic} if $\bar{f}$ is discrete holomorphic.
\end{definition}

Note that the discrete Cauchy-Riemann equation in the chart $z_Q$ is equivalent to the corresponding equation in a compatible chart $z'_Q$, i.e., it depends on the discrete complex structure only.

\begin{definition}
A mapping $f:V(\Lambda) \to V(\Lambda')$ is said to be \textit{discrete holomorphic} if the following conditions are satisfied:
\begin{enumerate}
 \item $f(V(\Gamma))\subseteq V(\Gamma')$ and $f(V(\Gamma^*))\subseteq{V({\Gamma'}^*)}$;
 \item for any quadrilateral $Q \in F(\Lambda)$, there exists a face $Q' \in F(\Lambda')$ such that $f(v)\sim Q'$ for all $v\sim Q$;
 \item for any quadrilateral $Q \in F(\Lambda)$, the function $z'_{Q'} \circ f: V(Q)\to \mC$ is discrete holomorphic.
\end{enumerate}
\end{definition}

The first condition asserts that $f$ respects the bipartite structures of the quad-decompositions. The second one discretizes continuity and guarantees that the third holomorphicity condition makes sense.

\begin{remark}
Note that a discrete holomorphic mapping $f$ may be \textit{biconstant} (constant at black and constant at white vertices) at some quadrilaterals, but not at all, whereas in the smooth case, any holomorphic mapping that is locally constant somewhere is constant on connected components. We resolve this contradiction by interpreting quadrilaterals where $f$ is biconstant as branch points.
\end{remark}


\subsection{Simple properties and branch points}\label{sec:branch}

The following lemma discretizes the classical fact that nonconstant holomorphic mappings are open.

\begin{lemma} \label{lem:open_map}
Let $f:V(\Lambda) \to V(\Lambda')$ be a discrete holomorphic mapping. Then, for any $v \in V(\Lambda)$ there exists a nonnegative integer $k$ such that the image of the star of $v$ goes $k$ times along the star of $f(v)$ (preserving the orientation).
\end{lemma}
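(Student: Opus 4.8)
The plan is to reduce the statement to a local winding count around the single vertex $v$, the essential input being a rigidity dichotomy for $f$ on each incident quadrilateral. Assume first that $v$ is black; the white case follows verbatim after replacing each $\rho_Q$ by $1/\rho_Q$. Let $Q_1,\dots,Q_n$ be the quadrilaterals of the star of $v$ in counterclockwise order, so that consecutive quadrilaterals $Q_s,Q_{s+1}$ share a radial edge $vv'_s\in E(\Lambda)$ from $v$ to a white neighbour $v'_s$ (indices mod $n$). By condition (ii) each $Q_s$ has an image face $Q'_s$ with $f(v)\sim Q'_s$, so $Q'_s$ lies in the star of $f(v)$; moreover, since $v,v'_s\sim Q_s,Q_{s+1}$, condition (ii) gives $f(v),f(v'_s)\sim Q'_s$ and $f(v),f(v'_s)\sim Q'_{s+1}$, so the black–white pair $f(v),f(v'_s)$ spans a common side of both faces. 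Hence $f$ carries each radial edge $vv'_s$ to a radial edge $f(v)f(v'_s)$ of the star of $f(v)$.

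The analytic heart is a dichotomy on a single quadrilateral $Q=Q_s$ with counterclockwise vertices $b_-,w_-,b_+,w_+$. Writing $g:=z'_{Q'}\circ f$ and using that the chart $z_Q$ has diagonal ratio $i\rho_Q$, the discrete Cauchy–Riemann equation reads $g(w_+)-g(w_-)=i\rho_Q\bigl(g(b_+)-g(b_-)\bigr)$. Since $g$ takes values among the four vertices of $z'_{Q'}(Q')$ and respects colours, $g(b_+)-g(b_-)\in\{0,\pm\Delta\}$ and $g(w_+)-g(w_-)\in\{0,\pm i\rho_{Q'}\Delta\}$, where $\Delta$ denotes the black diagonal of the image. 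Matching these against the Cauchy–Riemann relation forces either $g(b_+)=g(b_-)$, whence also $g(w_+)=g(w_-)$, so $f$ is biconstant on $Q$ and collapses it onto a single side at $f(v)$; or $g$ is a bijection onto the vertices of $Q'$. Here the hypothesis $\re\rho_Q,\re\rho_{Q'}>0$ is decisive: the two bijections that swap exactly one colour class (the orientation-reversing ones) are compatible with Cauchy–Riemann only if $\rho_{Q'}=-\rho_Q$, which positivity forbids, while the two orientation-preserving bijections instead force $\rho_{Q'}=\rho_Q$. Thus on each $Q_s$ the map $f$ is either biconstant or an orientation-preserving bijection onto $Q'_s$.

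It remains to turn this into a winding count. Enumerate the radial edges of the star of $f(v)$ as $E_0,\dots,E_{m-1}$ counterclockwise, and let $\sigma(s)$ be the index with $f(v)f(v'_s)=E_{\sigma(s)}$. If $Q_{s+1}$ is biconstant, then its two white vertices $v'_s,v'_{s+1}$ have the same image, so $\sigma(s+1)=\sigma(s)$. If $Q_{s+1}$ is an orientation-preserving bijection, its two radial edges $vv'_s,vv'_{s+1}$ map to the two radial edges of $Q'_{s+1}$ in their counterclockwise order, and these are consecutive in the star of $f(v)$, forcing $\sigma(s+1)=\sigma(s)+1\pmod m$. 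Hence $s\mapsto\sigma(s)$ only advances counterclockwise by one step or stands still, and never retreats. Traversing the star of $v$ once returns $\sigma$ to its initial value, so the total advance $\sum_{s}\bigl(\sigma(s+1)-\sigma(s)\bigr)$, which equals the number of non-biconstant incident quadrilaterals, is a nonnegative multiple $km$ of $m$. This $k\ge 0$ is exactly the number of times the image of the star of $v$ runs around the star of $f(v)$ in the positive direction, with $k=0$ precisely when every $Q_s$ is biconstant, i.e. at a branch point.

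The main obstacle is the middle step: establishing that a colour-preserving, vertex-valued discrete holomorphic map on one quadrilateral is either biconstant or an orientation-preserving bijection. Everything there hinges on combining the Cauchy–Riemann relation with the positivity of the real parts of both $\rho_Q$ and $\rho_{Q'}$ to exclude orientation reversal and the spurious solutions with $\rho_{Q'}=-\rho_Q$. Once this rigidity is secured, the global statement reduces to the bookkeeping of a monotone, non-decreasing index $\sigma$ around the vertex star.
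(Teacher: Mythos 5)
Your proof is correct and follows essentially the same route as the paper: the per-quadrilateral dichotomy (biconstant versus orientation-preserving bijection onto the image quadrilateral) followed by the bookkeeping that the image radial edge can only stand still or advance by one counterclockwise step, so that closing up forces the total advance to be a nonnegative multiple $km$ of the valence $m$ of $f(v)$ --- the paper merely asserts orientation-preservation and the dichotomy implicitly ``by definition of discrete holomorphicity,'' whereas you derive them explicitly from the discrete Cauchy--Riemann equation together with $\re\rho_Q,\re\rho_{Q'}>0$, which is a welcome filling-in of detail rather than a different method. One terminological slip in your final aside: in the paper's language, $k=0$ makes $v$ a \emph{vanishing point}, not a branch point (branch points are the vertices with $k>1$, or quadrilaterals where $f$ is biconstant).
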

\begin{proof}
By definition of discrete holomorphicity, the image of the star of $v$ is contained in the star of $f(v)$ and the orientation is preserved. If $f$ is biconstant around the star of $v$, then the statement is true with $k=0$. So assume that $f$ is not biconstant there. Then, at least one quadrilateral in the star is mapped to a complete quadrilateral in the star of $f(v)$. The next quadrilateral is either mapped to an edge if $f$ is biconstant at this quadrilateral or to the neighboring quadrilateral. Since this has to close up in the end, the image goes $k>0$ times along the star of $f(v)$.
\end{proof}

\begin{definition}
If the number $k$ in the lemma above is zero, then we say that $v$ is a \textit{vanishing point}. Otherwise, $v$ is a \textit{regular point}. If $k>1$, then we say that $v$ is a \textit{branch point} of multiplicity $k$. In any case, we define $b_f(v)=k-1$ as the \textit{branch number} of $f$ at $v$.

If $f$ is biconstant at $Q\in F(\Lambda)$, then we say that $Q$ is a \textit{branch point} of multiplicity two with branch number $b_f(Q)=1$. Otherwise, $Q$ is not a branch point and $b_f(Q)=0$.
\end{definition}

\begin{figure}[htbp]
   \centering
    \subfloat{
    \includegraphics[scale=0.4]{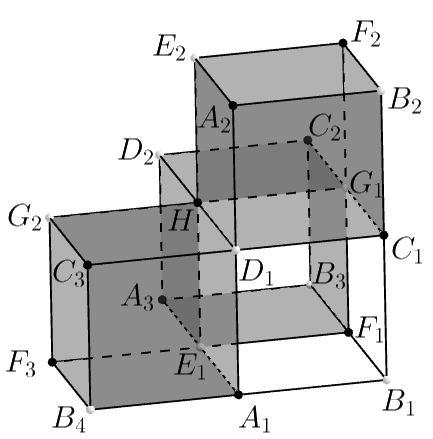}}
		\qquad
		\subfloat{
		\includegraphics[scale=0.4]{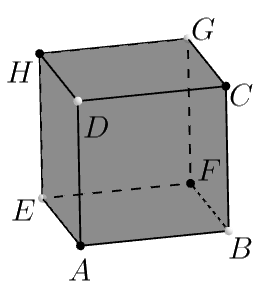}}
   \caption[]{Two-sheeted cover of a cube by a surface of genus three}
   \label{fig:cover}
\end{figure}

\begin{example}
Figure~\ref{fig:cover} shows a two-sheeted covering of an elementary cube by a surface of genus three that is composed of 8 vertices, 24 edges, and 12 faces. For this, points $X_i$ and $X_j$, $X \in \{A,B,C,D,E,F,G\}$ and $i,j \in \{1,2,3,4\}$, are identified. The mapping $f$ between these surfaces maps a point $X_i$ to the corresponding point $X$ on the cube.

The bipartite quad-decomposition of the surface of genus three is not strongly regular, but a uniform decomposition of each square into nine smaller squares gives us a strongly regular quad-decomposition. This makes both surfaces discrete Riemann surfaces in a canonical way, and $f$ is discrete holomorphic. Each of the eight vertices of the surface of genus three is a branch point of multiplicity two.
\end{example}

\begin{remark}
Note that even if $f$ is not globally biconstant, it may have vanishing points. The reason for saying that quadrilaterals where $f$ is biconstant are branch points of multiplicity two is that if we go along the vertices $b_-,w_-,b_+,w_+$ of $Q$, then its images are $f(b_-),f(w_-),f(b_-),f(w_-)$ (Figure~\ref{fig:merging}). However, in combination with vanishing points, this definition of branching might be misleading. It is more appropriate to consider a finite subgraph $\Diamond_0\subseteq\Diamond$ that forms a simply-connected closed region consisting of $F$ quadrilaterals, $I$ interior points $V_{\textnormal{int}}$ (all of them vanishing points), and $B=2(F-I+1)$ boundary points (all of them regular points) as one single branch point of multiplicity $F-I+1$. Indeed, black and white points alternate at the boundary and they are always mapped to the same black or white image point, respectively. In terms of branch numbers this interpretation is fine since \[F-I=\sum_{Q \in V(\Diamond_0)} b_f(Q) + \sum_{v \in V_{\textnormal{int}}} b_f(v).\]

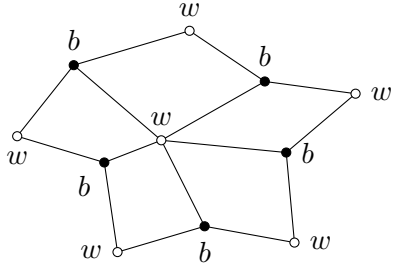
\begin{figure}[htbp]
		\centering
		\beginpgfgraphicnamed{merging}
		\begin{tikzpicture}
		[white/.style={circle,draw=black,fill=white,thin,inner sep=0pt,minimum size=1.2mm},
black/.style={circle,draw=black,fill=black,thin,inner sep=0pt,minimum size=1.2mm},
gray/.style={circle,draw=black,fill=gray,thin,inner sep=0pt,minimum size=1.2mm},scale=0.6]
	\clip(1.8,-4) rectangle (10.8,2);
				\draw (2.4,-1.03)-- (3.64,0.54);
				\draw (3.64,0.54)-- (6.18,1.3);
				\draw (3.64,0.54)-- (5.56,-1.12);
				\draw (5.56,-1.12)-- (4.31,-1.61);
				\draw (4.31,-1.61)-- (2.4,-1.03);
				\draw (4.31,-1.61)-- (4.6,-3.59);
				\draw (9.82,-0.09)-- (7.83,0.18);
				\draw (9.82,-0.09)-- (8.3,-1.39);
				\draw (7.83,0.18)-- (5.56,-1.12);
				\draw (8.3,-1.39)-- (5.56,-1.12);
				\draw (7.83,0.18)-- (6.18,1.3);
				\draw (5.56,-1.12)-- (6.51,-3.02);
				\draw (6.51,-3.02)-- (4.6,-3.59);
				\draw (6.51,-3.02)-- (8.48,-3.38);
				\draw (8.48,-3.38)-- (8.3,-1.39);
					\node[white] (w1) [label=right:$w$] at (9.82,-0.09) {};
					\node[white] (w2) [label=above:$w$] at (5.56,-1.12) {};
					\node[white] (w3) [label=below:$w$] at (2.4,-1.03) {};
					\node[white] (w4) [label=right:$w$] at (8.48,-3.38) {};
					\node[white] (w5) [label=left:$w$] at (4.6,-3.59) {};
					\node[white] (w6) [label=above:$w$] at (6.18,1.3) {};
					\node[black] (b1) [label=above:$b$] at (7.83,0.18) {};
					\node[black] (b2) [label=right:$b$] at (8.3,-1.39) {};
					\node[black] (b3) [label=above:$b$] at (3.64,0.54) {};
					\node[black] (b4) [label=below:$b$] at (6.51,-3.02) {};
					\node[black] (b5) [label=below left:$b$] at (4.31,-1.61) {};
			\end{tikzpicture}
		\endpgfgraphicnamed
   \caption{A branch point of multiplicity 5=5-1+1 (labels indicate image points)}
   \label{fig:merging}
\end{figure}

\end{remark}

\begin{corollary}\label{cor:surjective}
Let $f:V(\Lambda) \to V(\Lambda')$ be discrete holomorphic and not biconstant. Then, $f$ is surjective. If in addition $\Sigma$ is compact, then $\Sigma'$ is compact as well.
\end{corollary}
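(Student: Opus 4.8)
The plan is to mimic the classical proof that a nonconstant holomorphic map out of a connected surface has open and closed image, using Lemma~\ref{lem:open_map} as the local ingredient and the connectedness of $\Sigma'$ as the global one. First I would introduce the set $T'\subseteq F(\Lambda')$ of those faces $Q'$ that are \emph{fully covered}, meaning that there is a quadrilateral $Q\in F(\Lambda)$ on which $f$ restricts to a bijection from the vertices of $Q$ onto the vertices of $Q'$; equivalently, $f$ is not biconstant on $Q$ and its image face is $Q'$. Because $f$ is not biconstant, at least one quadrilateral is mapped onto a complete quadrilateral, so $T'\neq\emptyset$.

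The heart of the argument is to show that $T'$ is closed under edge-adjacency of faces in $\Lambda'$. Suppose $Q'_1\in T'$ and $Q'_2\in F(\Lambda')$ shares an edge $e'=b'w'$ with $Q'_1$. Choose $Q_1$ with $f|_{Q_1}$ bijective onto the vertices of $Q'_1$, and let $b$ be the vertex of $Q_1$ mapped to the endpoint $b'$ of $e'$. Since $Q_1\sim b$ is mapped onto a complete quadrilateral, $b$ is not a vanishing point, so by Lemma~\ref{lem:open_map} the image of the star of $b$ runs $k\geq 1$ times along the star of $b'$; in particular every face incident to $b'$ is realized as the image of a complete quadrilateral in the star of $b$, and therefore lies in $T'$. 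As $b'$ is an endpoint of $e'$ and $e'$ is an edge of $Q'_2$, the face $Q'_2$ is incident to $b'$ and hence belongs to $T'$.

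Now I would conclude. The edge-adjacency graph of the faces of $\Lambda'$ is connected because $\Sigma'$ is a connected surface, so the nonempty adjacency-closed set $T'$ must equal all of $F(\Lambda')$. Thus every face of $\Lambda'$ is fully covered, so in particular every vertex of $\Lambda'$ lies in the image of $f$, which proves surjectivity. For the compactness claim, compactness of $\Sigma$ together with local finiteness forces $F(\Lambda)$ to be finite; since the assignment sending each $Q$ to its image face is defined on all of $F(\Lambda)$ and, by $T'=F(\Lambda')$, hits every face of $\Lambda'$, the set $F(\Lambda')$ is finite as well. Then $\Sigma'$ is the union of the finitely many closed quadrilaterals $\overline{Q'}$, each compact, hence $\Sigma'$ is compact.

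The delicate point I expect to be the main obstacle is the adjacency-propagation step: one must extract from the phrase ``the image goes $k$ times along the star of $f(v)$'' in Lemma~\ref{lem:open_map} the precise assertion that, when $k\geq 1$, \emph{every} face of the star of $b'$ is realized as the image of a complete quadrilateral, and not merely touched by a degenerate one. Once this is pinned down, the open--closed bookkeeping via connectedness and the finiteness count behind compactness are both routine.
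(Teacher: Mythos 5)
Your proof is correct, but it runs along a genuinely different track than the paper's. The paper argues by contradiction on \emph{vertices}: it picks a black vertex $v'$ outside the image, takes $v_0'\in f(\Gamma)$ combinatorially closest to $v'$, uses the observation that all black neighbors of a black vanishing point share its image (plus non-biconstancy and connectedness) to find a \emph{regular} preimage $v_0$ of $v_0'$, and then applies Lemma~\ref{lem:open_map} to produce an image point strictly closer to $v'$ — a descent contradiction. You instead run an open--closed argument on the \emph{dual} graph: the set $T'$ of fully covered faces is nonempty and closed under edge-adjacency, hence all of $F(\Lambda')$ by connectedness of $\Diamond'=(\Lambda')^*$. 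Both proofs have Lemma~\ref{lem:open_map} as their only local ingredient, and the delicate point you flag is indeed justified by the lemma's proof: walking around the star of $b$, the image advances one face along the star of $b'$ exactly at the non-biconstant quadrilaterals and stalls at the biconstant ones, so closing up after $k\geq 1$ loops forces every face of the star of $b'$ to be the bijective image of a complete quadrilateral. One small step you should make explicit (the paper also leaves it implicit, inside Lemma~\ref{lem:open_map}): nonemptiness of $T'$ uses the dichotomy from the discrete Cauchy--Riemann equation that on each quadrilateral $f$ is either biconstant or maps the four vertices bijectively onto the four vertices of the image face — if $f(b_+)\neq f(b_-)$ then the equation forces $f(w_+)\neq f(w_-)$, and strong regularity identifies the vertices incident to $Q'$ with the vertices of $Q'$. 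What each approach buys: the paper's descent is shorter; yours proves the stronger statement that every face of $\Lambda'$ is fully covered, which is precisely the second characterization of the sheet number in Theorem~\ref{th:degree} (where the paper rederives it from Lemma~\ref{lem:open_map}), and your compactness count via $Q\mapsto Q'$ on faces is an equally routine replacement for the paper's vertex count.
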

\begin{proof}
Assume that $f$ is not surjective. Then, there is $v'\in V(\Lambda')$ not contained in the image. Say $v'$ is black. Take $v_0' \in f(\Gamma)$ combinatorially closest to $v'$. Since all black neighbors of a black vanishing point of $f$ have the same image and $f$ is not biconstant, there is a regular point $v_0$ in the preimage of $v_0'$. By Lemma~\ref{lem:open_map}, the image of the star of $v_0$ equals the star of $v_0'$. Thus, there is an image point combinatorially nearer to $v'$ as $v_0'$, contradiction.

If $\Sigma$ is compact, then $\Lambda$ is finite. So $\Lambda'$ is finite as well and $\Sigma'$ is compact.
\end{proof}

\begin{corollary}\label{cor:Liouville}
Let $\Sigma$ be compact and $\Sigma'$ be homeomorphic to a plane. Then, any discrete holomor\-phic mapping $f:V(\Lambda) \to V(\Lambda')$ is biconstant.
\end{corollary}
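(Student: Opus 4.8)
The plan is to argue by contradiction and invoke Corollary~\ref{cor:surjective}, which already packages the entire openness argument of Lemma~\ref{lem:open_map}. The only genuinely new ingredient needed is the topological observation that a surface homeomorphic to a plane is non-compact, so the dichotomy provided by Corollary~\ref{cor:surjective} can be resolved in only one way. In this sense the statement is nothing more than the discrete counterpart of the classical fact that a nonconstant holomorphic map out of a compact Riemann surface has compact image and therefore cannot land in $\mC$.

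Concretely, I would suppose that $f$ is \emph{not} biconstant and derive a contradiction. Since $\Sigma$ is compact by hypothesis, the second part of Corollary~\ref{cor:surjective} applies: a discrete holomorphic mapping that is not biconstant on a compact source is surjective, and moreover forces the target surface $\Sigma'$ to be compact as well (because $\Lambda$ finite and surjectivity make $\Lambda'$ finite). But $\Sigma'$ is assumed homeomorphic to a plane, and a plane is not compact; equivalently, any quad-decomposition of a plane has infinitely many quadrilaterals, so $\Sigma'$ cannot carry the finite decomposition $\Lambda'$ that compactness would require. This contradiction shows that $f$ must in fact be biconstant.

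Since Corollary~\ref{cor:surjective} does all of the work, there is essentially no obstacle here. The one point worth stating explicitly is the non-compactness of the target, which is immediate because no closed surface without boundary is homeomorphic to the plane. Thus the whole proof reduces to observing that the ``compact target'' alternative of Corollary~\ref{cor:surjective} is ruled out by hypothesis, leaving biconstancy as the only possibility.
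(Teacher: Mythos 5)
Your proof is correct and is exactly the argument the paper intends: the corollary follows immediately from Corollary~\ref{cor:surjective}, since a non-biconstant $f$ with compact source would force $\Sigma'$ to be compact, contradicting the non-compactness of a surface homeomorphic to the plane. Nothing further is needed.
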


Note that we will prove the more general discretization of Liouville's theorem that any complex valued discrete holomorphic function $f:V(\Lambda) \to \mC$ on a compact discrete Riemann surface is biconstant later in Theorem~\ref{th:Liouville}.

\begin{theorem}\label{th:degree}
Let $f:V(\Lambda) \to V(\Lambda')$ be a discrete holomorphic mapping. Then, there exists a number $N \in \mZ_{\geq 0} \cup \left\{ \infty \right\}$ such that for all $v' \in V(\Lambda')$: \[N=\sum_{v\in f^{-1}(v')}\left(b_f(v)+1\right).\] Furthermore, for any $Q' \in F(\Lambda')$, $N$ equals the number of $Q \in F(\Lambda)$ such that $f$ maps the vertices of $Q$ bijectively to the vertices of $Q'$.
\end{theorem}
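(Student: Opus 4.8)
The plan is to introduce, for each $v'\in V(\Lambda')$, the fibre weight $N(v'):=\sum_{v\in f^{-1}(v')}(b_f(v)+1)$ and, for each $Q'\in F(\Lambda')$, the face count $M(Q')$ equal to the number of $Q\in F(\Lambda)$ whose vertices $f$ maps bijectively onto those of $Q'$. I would show $N(v')=M(Q')$ whenever $v'\sim Q'$, and then use connectedness of $\Sigma'$ to deduce that both are globally constant. The first task is to translate Lemma~\ref{lem:open_map} into a per-quadrilateral count. Fix $v'$, a preimage $v\in f^{-1}(v')$, and write $k=b_f(v)+1$. By the lemma the image of the star of $v$ runs $k$ times around the star of $v'$ preserving orientation; the quadrilaterals incident to $v$ at which $f$ is not biconstant are exactly those mapped bijectively onto quadrilaterals incident to $v'$, and in wrapping around $k$ times each quadrilateral incident to $v'$ is covered exactly $k$ times. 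Hence for every fixed $Q'\sim v'$ the number of $Q\sim v$ whose vertices $f$ maps bijectively onto those of $Q'$ equals $k=b_f(v)+1$; for a vanishing point $k=0$, consistent with the fact that then every incident quadrilateral is biconstant.

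With this in hand I would run a double count. Fix $v'$ and any incident $Q'$, and count pairs $(v,Q)$ with $v\in f^{-1}(v')$, $Q\sim v$, and $f$ mapping the vertices of $Q$ bijectively onto those of $Q'$. Summing over $v$ first and using the previous paragraph gives $\sum_{v\in f^{-1}(v')}(b_f(v)+1)=N(v')$. Counting instead by $Q$: any $Q$ mapping bijectively onto $Q'$ has, by bijectivity, a single vertex sent to the prescribed $v'$, and that vertex $v$ satisfies $v\in f^{-1}(v')$ and $Q\sim v$, so $Q$ contributes exactly one pair; thus the count equals $M(Q')$. Therefore $N(v')=M(Q')$ for every incident pair.

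It remains to propagate this equality. Since $M(Q')$ depends only on $Q'$, the four vertices of any quadrilateral $Q'$ share one value of $N$; dually $N(v')$ is shared by all quadrilaterals incident to $v'$. As $\Sigma'$ is connected, the incidence graph on $V(\Lambda')\cup F(\Lambda')$ with an edge for each relation $v'\sim Q'$ is connected, so all the values $N(v')$ and $M(Q')$ coincide with a single $N\in\mZ_{\geq0}\cup\{\infty\}$, the value $\infty$ arising precisely when one, hence every, fibre is infinite. This common $N$ is the asserted degree, and the final assertion of the theorem is exactly the identity $N=M(Q')$.

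I expect the main obstacle to be the first step: reading off from the open-mapping lemma that, around a preimage $v$ of multiplicity $k$, every target quadrilateral incident to $v'$ is covered exactly $k$ times, with the biconstant quadrilaterals and vanishing points accounted for correctly, and checking that ``$f$ not biconstant at $Q$'' coincides with ``$f$ maps the vertices of $Q$ bijectively onto those of $Q'$'' via condition (ii) of discrete holomorphicity. Once this local count is secured, the double count and the connectedness argument are routine.
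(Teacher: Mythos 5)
Your proposal is correct and takes essentially the same approach as the paper: the local count from Lemma~\ref{lem:open_map} that exactly $b_f(v)+1$ quadrilaterals incident to a preimage $v$ are mapped bijectively onto a fixed $Q'\sim f(v)$, the double count of pairs $(v,Q)$ via the observation that a bijectively mapped $Q$ has exactly one vertex over $v'$, and propagation of the common value through vertex--face incidences. The only cosmetic differences are that you make the connectedness step explicit and treat the biconstant case uniformly, whereas the paper splits that case off and invokes Corollary~\ref{cor:surjective}, which your double count renders unnecessary.
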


\begin{proof}
If $f$ is biconstant, then all $b_f(v)+1$ are zero and $N=0$ fulfills the requirements.

Assume now that $f$ is not biconstant. By Corollary~\ref{cor:surjective}, $f$ is surjective. Let $Q' \in F(\Lambda')$ and let $v'$ be a vertex of $Q'$. We want to count the number $N$ of $Q \in F(\Lambda)$ such that $f$ maps the vertices of $Q$ bijectively to the vertices of $Q'$. Let $v \in f^{-1}(v')$. By Lemma~\ref{lem:open_map}, exactly $b_f(v)+1$ quadrilaterals incident to $v$ are mapped bijectively to $Q'$. Conversely, any $Q \in F(\Lambda)$ such that $f$ maps the vertices of $Q$ bijectively to the vertices of $Q'$ has exactly one vertex in the preimage of $f^{-1}(v')$. Therefore, \[N=\sum_{v\in f^{-1}(v')}\left(b_f(v)+1\right).\] The same formula holds true if we replace $Q'$ by another face incident to $v'$ or $v'$ by some other vertex incident to $Q'$. Thus, $N$ does not depend on the choice of the face $Q'$ and the incident vertex $v'$.
\end{proof}

\begin{definition}
If $N>0$, then $f$ is called an \textit{$N$-sheeted discrete holomorphic covering}.
\end{definition}

\begin{remark}
If $\Sigma$ is compact, then $N<\infty$. The characterization of $N$ as the number of preimage quadrilaterals nicely explains why $N$ is called the number of sheets of $f$. However, a quadrilateral of $\Lambda$ corresponds to one of the $N$ sheets (and not to just two single points) only if $f$ is not biconstant there.
\end{remark}

Finally, we state and prove a \textit{discrete Riemann-Hurwitz formula}.

\begin{theorem}\label{th:Riemann_Hurwitz}
Let $\Sigma$ be compact and $f:V(\Lambda) \to V(\Lambda')$ be an $N$-sheeted discrete holomorphic covering of the compact discrete Riemann surface $\Sigma'$ of genus $g'$. Then, the genus $g$ of $\Sigma$ is equal to \[g=N(g'-1)+1+\frac{b}{2},\] where $b$ is the total branching number of $f$: \[b=\sum_{v \in V(\Lambda)} b_f(v) + \sum_{Q \in V(\Diamond)} b_f(Q).\]
\end{theorem}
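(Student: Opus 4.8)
The plan is to mimic the classical proof by an Euler characteristic count, the key inputs being the two counting identities supplied by Theorem~\ref{th:degree}. Since $\Sigma$ and $\Sigma'$ are compact oriented surfaces without boundary, their Euler characteristics are $\chi(\Sigma)=2-2g$ and $\chi(\Sigma')=2-2g'$. For a quad-decomposition every face has four edges and, because the surface is closed, every edge is shared by exactly two faces, so $2|E(\Lambda)|=4|F(\Lambda)|$ and hence \[\chi(\Sigma)=|V(\Lambda)|-|E(\Lambda)|+|F(\Lambda)|=|V(\Lambda)|-|F(\Lambda)|;\] the same reduction applies to $\Lambda'$. Thus it suffices to establish the relation $\chi(\Sigma)=N\,\chi(\Sigma')-b$, since solving $2-2g=N(2-2g')-b$ for $g$ yields exactly $g=N(g'-1)+1+b/2$.

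First I would count vertices. As $N>0$, the map $f$ is not biconstant and hence surjective by Corollary~\ref{cor:surjective}. Summing the identity $N=\sum_{v\in f^{-1}(v')}(b_f(v)+1)$ of Theorem~\ref{th:degree} over all $v'\in V(\Lambda')$ and regrouping the double sum according to the unique image $f(v)$ of each vertex $v$ gives \[N\,|V(\Lambda')|=\sum_{v\in V(\Lambda)}\bigl(b_f(v)+1\bigr)=|V(\Lambda)|+\sum_{v\in V(\Lambda)}b_f(v),\] so that $|V(\Lambda)|=N\,|V(\Lambda')|-\sum_{v}b_f(v)$.

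The delicate step is the face count. By the second condition of discrete holomorphicity every $Q\in F(\Lambda)$ has all its vertices incident to a single face $Q'\in F(\Lambda')$, and since $f$ respects colors the black vertices $b_\pm$ map to black vertices of $Q'$ and the white vertices $w_\pm$ to white ones. I claim only two cases can occur: either $f$ maps $V(Q)$ bijectively onto $V(Q')$, or $f$ is biconstant at $Q$. Indeed, if $f(b_-)=f(b_+)$ then in the chart $z'_{Q'}$ the numerator $z'_{Q'}(f(b_+))-z'_{Q'}(f(b_-))$ vanishes, so the discrete Cauchy-Riemann equation for $z'_{Q'}\circ f$ forces $z'_{Q'}(f(w_+))-z'_{Q'}(f(w_-))=0$ and hence $f(w_-)=f(w_+)$; the converse implication is symmetric, which rules out any partial collapse onto a triangle. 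The bijective faces are precisely the non-branch faces ($b_f(Q)=0$), and by Theorem~\ref{th:degree} each $Q'\in F(\Lambda')$ receives exactly $N$ of them, so there are $N\,|F(\Lambda')|$ such faces in total; the remaining faces are the biconstant ones, numbering $\sum_{Q}b_f(Q)$. Therefore \[|F(\Lambda)|=N\,|F(\Lambda')|+\sum_{Q\in V(\Diamond)}b_f(Q).\]

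Subtracting the face count from the vertex count then gives \[\chi(\Sigma)=|V(\Lambda)|-|F(\Lambda)|=N\bigl(|V(\Lambda')|-|F(\Lambda')|\bigr)-\sum_{v}b_f(v)-\sum_{Q}b_f(Q)=N\,\chi(\Sigma')-b,\] which is the sought relation. I expect the main obstacle to be the face dichotomy above: one must be certain that a quadrilateral cannot degenerate onto a triangle under $f$, and the argument ruling this out rests essentially on the linearity of the discrete Cauchy-Riemann equation in the chart $z'_{Q'}$ together with the injectivity of $z'_{Q'}$ on the vertices of $Q'$.
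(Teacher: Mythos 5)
Your proof is correct and takes essentially the same route as the paper: reduce via $|E(\Lambda)|=2|F(\Lambda)|$ to the Euler characteristic relation $2-2g=N(2-2g')-b$, then extract the vertex count $|V(\Lambda)|=N|V(\Lambda')|-\sum_v b_f(v)$ and the face count $|F(\Lambda)|=N|F(\Lambda')|+\sum_Q b_f(Q)$ from Theorem~\ref{th:degree}. Your explicit justification of the face dichotomy (each quadrilateral is either mapped bijectively onto a face of $\Lambda'$ or is biconstant, ruled by the linearity of the discrete Cauchy-Riemann equation) is a correct verification of a step the paper uses without comment, so it is a welcome addition rather than a departure.
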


\begin{proof}
Since we consider quad-decompositions, the number of edges of $\Lambda$ equals twice the number of faces. Thus, the Euler characteristic $2-2g$ of $\Sigma$ is given by $|V(\Lambda)|-|V(\Diamond)|$. By Theorem~\ref{th:degree}, \[|V(\Lambda)|=N|V(\Lambda')|-\sum_{v \in V(\Lambda)} b_f(v).\] If we count the number of faces of $\Lambda$, then we have $N|V(\Diamond')|$ quadrilaterals that are mapped to a complete quadrilateral of $\Lambda'$ by Theorem~\ref{th:degree} and $\sum_{Q \in V(\Diamond)} b_f(Q)$ faces are mapped to an edge of $\Lambda'$. Hence, \[|V(\Diamond)|=N|V(\Diamond')|+\sum_{Q \in V(\Diamond)} b_f(Q).\]

\begin{align*}2-2g=|V(\Lambda)|-|V(\Diamond)|&=N|V(\Lambda')|-\sum_{v \in V(\Lambda)} b_f(v)-N|V(\Diamond')|-\sum_{Q \in V(\Diamond)} b_f(Q)=N(2-2g')-b\end{align*} now implies the final result.
\end{proof}

\begin{example}
In the example depicted in Figure~\ref{fig:cover}, $g=3$, $g'=0$, $N=2$, and $b=8$. \[3=2\cdot(0-1)+1+\frac{8}{2}\] then demonstrates the validity of the discrete Riemann-Hurwitz formula. 
\end{example}


\section{Discrete exterior calculus}\label{sec:differentials}

In this section, we consider a discrete Riemann surface $(\Sigma, \Lambda, z)$ and adapt the fundamental notions and properties of discrete complex analysis discussed in \cite{BoG15} to discrete Riemann surfaces. All omitted proofs can be literally translated from \cite{BoG15} to the more general setting of discrete Riemann surfaces.

Note that our treatment of discrete exterior calculus is similar to Mercat's approach in \cite{Me01,Me07,Me08}. However, in Section~\ref{sec:differential_forms}
we suggest a different notation of multiplication of functions with discrete one-forms, leading to a discrete exterior derivative that is defined on a larger class of discrete one-forms in Section~\ref{sec:derivative}. It coincides with Mercat's discrete exterior derivative in the case of discrete one-forms of type $\Diamond$ that he considers. In contrast, our definitions mimic the coordinate representation of the smooth theory. Still, our definitions of a discrete wedge product in Section~\ref{sec:wedge} and a discrete Hodge star in Section~\ref{sec:hodge} are equivalent to Mercat's in \cite{Me08}.


\subsection{Discrete differential forms} \label{sec:differential_forms}

The most important type of functions is $f:V(\Lambda)\to\mC$, but in local charts complex functions defined on subsets of $V(\Diamond)$ such as $\partial_\Lambda f$ occur as well.

\begin{definition}
A \textit{discrete one-form} or \textit{discrete differential} $\omega$ is a complex function on the oriented edges of the medial graph $X_0$ such that $\omega(-e)=\omega(e)$ for any oriented edge $e$ of $X_0$. Here, $-e$ denotes the edge $e$ with opposite orientation.

The evaluation of $\omega$ at an oriented edge $e$ of $X_0$ is denoted by $\int_e \omega$. For a directed path $P$ in $X_0$ consisting of oriented edges $e_1,e_2,\ldots,e_n$, the \textit{discrete integral} along $P$ is defined as $\int_P \omega=\sum_{k=1}^n \int_{e_k} \omega$. For closed paths $P$, we write $\oint_P \omega$ instead.
\end{definition}
\begin{remark}
If we speak about discrete one-forms or discrete differentials and do not specify their domain, then we will always assume that they are defined on oriented edges of the whole medial graph $X$.
\end{remark}

Of particular interest are discrete one-forms that actually come from discrete one-forms on $\Gamma$ and $\Gamma^*$.

\begin{definition}
A discrete one-form $\omega$ defined on the oriented edges of $X_0$ is of \textit{type} $\Diamond$ if for any quadrilateral $Q \in V(\Diamond_0)$ and its incident black (or white) vertices $v,v'$ the equality $\omega([Q,v])=-\omega([Q,v'])$ holds. The latter two edges inherit their orientation from $\partial F_Q$.
\end{definition}

\begin{definition}
A \textit{discrete two-form} $\Omega$ is a complex function on $F(X_0)$.

The evaluation of $\Omega$ at a face $F$ of $X_0$ is denoted by $\iint_F \Omega$. If $S$ is a set of faces $F_1,\ldots, F_n$ of $X_0$, then $\iint_S \Omega=\sum_{k=1}^n \iint_{F_k} \Omega$ defines the \textit{discrete integral} of $\Omega$ over $S$.

$\Omega$ is of \textit{type} $\Lambda$ if $\Omega$ vanishes on all faces of $X_0$ corresponding to $V(\Diamond_0)$ and of \textit{type} $\Diamond$ if $\Omega$ vanishes on all faces of $X_0$ corresponding to $V(\Lambda_0)$.
\end{definition}
\begin{remark}
Discrete two-forms of type $\Lambda$ or type $\Diamond$ correspond to functions on $V(\Lambda_0)$ or $V(\Diamond_0)$ by the discrete Hodge star that will be defined later in Section~\ref{sec:hodge}.
\end{remark}

\begin{definition}
Let for short $z$ be a chart $z_Q$ of a quadrilateral $Q \in V(\Diamond)$ or a chart $z_v$ of the star of a vertex $v \in V(\Lambda)$. On its domain, the discrete one-forms $dz$ and $d\bar{z}$ are defined in such a way that $\int_e dz=z(e)$ and $\int_e d\bar{z}=\overline{z(e)}$ hold for any oriented edge $e$ of $X$. The discrete two-forms $\Omega_\Lambda^z$ and $\Omega_\Diamond^z$ are zero on faces of $X$ corresponding to vertices of $\Diamond$ or $\Lambda$, respectively, and defined by \[\iint_F \Omega_\Lambda^z=-4i\textnormal{area}(z(F)) \textnormal{ and } \iint_F \Omega_\Diamond^z=-4i\textnormal{area}(z(F))\] on faces $F$ corresponding to vertices of $\Lambda$ or $\Diamond$, respectively. Here, $\textnormal{area}(z(F))$ denotes the algebraic area of the polygon $z_v(F_v)$ or the Euclidean area of the parallelogram $z(F)$, respectively.
\end{definition}

\begin{remark}
Our main objects either live on the quad-decomposition $\Lambda$ or on its dual $\Diamond$. Thus, we have to deal with two different cellular decompositions at the same time. The medial graph has the crucial property that its faces split into two sets which are respectively $\Lambda=\Diamond^*$ and $\Diamond=\Lambda^*$. Furthermore, the Euclidean area of the Varignon parallelogram inside a quadrilateral $z(Q)$ is just half of its area. In an abstract sense, a corresponding statement is true for the cells of $X$ corresponding to vertices of $\Lambda$ and the faces of $\Diamond$. This statement can be made precise in the setting of planar parallelogram-graphs, see \cite{BoG15}. For this reason, the additional factor of two is necessary to make $\Omega_\Lambda^z$ and $\Omega_\Diamond^z$ the straightforward discretizations of $dz \wedge d\bar{z}$. As it turns out in Section~\ref{sec:wedge}, $\Omega_\Diamond^z$ is indeed the discrete wedge product of $dz$ and $d\bar{z}$.
\end{remark}

\begin{definition}
Let $f:V(\Lambda_0)\to\mC$, $h:V(\Diamond_0)\to\mC$, $\omega$ a discrete one-form defined on the oriented edges of $X_0$, and $\Omega_1,\Omega_2$ discrete two-forms defined on $F(X_0)$ that are of type $\Lambda$ and $\Diamond$, respectively. For any oriented edge $e=[Q,v]$ and any faces $F_v, F_Q$ of $X_0$ corresponding to $v\in V(\Lambda_0)$ or $Q \in V(\Diamond_0)$, we define the products $f\omega$, $h\omega$, $f\Omega_1$, and $h\Omega_2$ by
\begin{align*}
\int_{e}f\omega:&=f(v)\int_{e}\omega \ \quad \textnormal{ and } \quad \iint_{F_v} f\Omega_1:=f(v)\iint_{F_v}\Omega_1, \quad \iint_{F_Q} f\Omega_1:=0;\\
\int_{e}h\omega:&=h(Q)\int_{e}\omega \quad \textnormal{ and } \quad \iint_{F_v} h\Omega_2:=0, \qquad \qquad \qquad \; \! \iint_{F_Q} h\Omega_2:=h(Q)\iint_{F_Q}\Omega_2.
\end{align*}
\end{definition}

\begin{remark}
A discrete one-form of type $\Diamond$ can be locally represented as $pdz_Q+qd\bar{z}_Q$ on all edges of a face of $X$ corresponding to $Q \in V(\Diamond)$, where $p,q \in \mC$. Similarly, we could define discrete one-forms of type $\Lambda$. However, this notion would depend on the chart and would be not well-defined on a discrete Riemann surface.
\end{remark}


\subsection{Discrete derivatives and Stokes' theorem}\label{sec:derivative}

\begin{definition}
Let $Q \in V(\Diamond)\cong F(\Lambda)$ and $f$ be a complex function on the vertices of $Q$. In addition, let $v\in V(\Lambda)$ and $h$ be a complex function defined on all quadrilaterals $Q_s \sim v$. Let $F_Q$ and $F_v$ be the faces of $X$ corresponding to $Q$ and $v$ with counterclockwise orientations of their boundaries. Then, the \textit{discrete derivatives} $\partial_\Lambda f$, $\bar{\partial}_\Lambda f$ in the chart $z_Q$ and $\partial_\Diamond h$, $\bar{\partial}_\Diamond h$ in the chart $z_v$ are defined by
\begin{align*}
 \partial_\Lambda f(Q)&:=\frac{1}{\iint_{F_Q} \Omega_\Diamond^{z_Q}}\oint_{\partial F_Q} f d\bar{z}_Q, \qquad \bar{\partial}_\Lambda f (Q):=\frac{-1}{\iint_{F_Q} \Omega_\Diamond^{z_Q}}\oint_{\partial F_Q} f dz_Q;\\
 \partial_\Diamond h(v)&:=\frac{1}{\iint_{F_v} \Omega_\Lambda^{z_v}}\oint_{\partial F_v} h d\bar{z}_v, \qquad \quad \bar{\partial}_\Diamond h(v):=\frac{-1}{\iint_{F_v} \Omega_\Lambda^{z_v}}\oint_{\partial F_v} h dz_v.
\end{align*}

$h$ is said to be \textit{discrete holomorphic} in the chart $z_v$ if $\bar{\partial}_\Diamond h (v)=0$.
\end{definition}

As in the classical theory, the discrete derivatives depend on the chosen chart. We do not include these dependences in the notions, but it will be clear from the context which chart is used.

\begin{remark}
Whereas discrete holomorphicity for functions $f:V(\Lambda) \to \mC$ is well-defined and equivalent to $\bar{\partial}_\Lambda f (Q)=0$ in any chart $z_Q$ (see \cite{BoG15}), discrete holomorphicity of functions on $V(\Diamond)$ is not consistently defined by the discrete complex structure. Indeed, if $\rho_Q=1$ for all faces $Q$ incident to $v \in V(\Lambda)$, then any cyclic polygon with the correct number of vertices can be the image of the vertices adjacent to $v$ under a chart $z_v$ compatible with the discrete complex structure, but the equation $\bar{\partial}_\Diamond h (v)=0$ depends on the choice of the cyclic polygon. 
\end{remark}

\begin{definition}
Let $f:V(\Lambda_0) \to \mC$ and $h:V(\Diamond_0) \to \mC$. We define the \textit{discrete exterior derivatives} $df$ and $dh$ on the edges of $X_0$ in a chart $z$ as follows:
\begin{align*}
 df:=\partial_\Lambda f dz+\bar{\partial}_\Lambda f d\bar{z}, \quad dh:=\partial_\Diamond h dz+\bar{\partial}_\Diamond h d\bar{z}.
\end{align*}

Let $\omega$ be a discrete one-form defined on all boundary edges of a face $F_v$ of the medial graph $X$ corresponding to $v\in V(\Lambda)$ or on all four boundary edges of a face $F_Q$ of $X$ corresponding to $Q\in F(\Lambda)$. In a chart $z$ around $F_v$ or $F_Q$, respectively, we write $\omega=p dz+ q d\bar{z}$ with functions $p,q$ defined on faces incident to $v$ or vertices incident to $Q$, respectively. The \textit{discrete exterior derivative} $d\omega$ is given by
\begin{align*}
 d\omega|_{F_v}&:=\left(\partial_\Diamond q - \bar{\partial}_\Diamond p\right)  \Omega_\Lambda^z,\\
 d\omega|_{F_Q}&:=\left(\partial_\Lambda q - \bar{\partial}_\Lambda p\right) \Omega_\Diamond^z.
\end{align*}
\end{definition}

The representation of $\omega$ as $p dz+ q d\bar{z}$ ($p,q$ defined on edges of $X$) we have used above may be nonunique. However, $d\omega$ is well-defined and does not depend on the chosen chart by \textit{discrete Stokes' theorem}.

\begin{theorem}\label{th:stokes}
Let $f:V(\Lambda_0) \to \mC$ and $\omega$ be a discrete one-form defined on oriented edges of $X_0$. Then, for any directed edge $e$ of $X_0$ starting in the midpoint of the edge $vv'_-$ and ending in the midpoint of the edge $vv'_+$ of $\Lambda_0$ and for any finite collection of faces $F$ of $X_0$ with counterclockwise oriented boundary $\partial F$ we have:

\begin{align*}
 \int_e df&=\frac{f(v'_+)-f(v'_-)}{2}=\frac{f(v)+f(v'_+)}{2}-\frac{f(v)+f(v'_-)}{2};\\
 \iint_F d\omega&=\oint_{\partial F} \omega.
\end{align*}
\end{theorem}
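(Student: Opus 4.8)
The plan is to treat the two identities separately, reducing the second to the case of a single face of $X_0$, and to obtain both essentially by unwinding the definitions of the discrete derivatives and of the discrete exterior derivative.

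For the first identity I would work in a fixed chart $z_Q$ of the quadrilateral $Q$ containing the medial edge $e=[Q,v]$. By the medial relation $2z_Q(e)=z_Q(v'_+)-z_Q(v'_-)$ and the definition $df=\partial_\Lambda f\,dz_Q+\bar\partial_\Lambda f\,d\bar z_Q$, one has $\int_e df=\partial_\Lambda f(Q)\,z_Q(e)+\bar\partial_\Lambda f(Q)\,\overline{z_Q(e)}$. Writing $D_b:=z_Q(b_+)-z_Q(b_-)$ and $D_w:=z_Q(w_+)-z_Q(w_-)$ for the images of the two diagonals, the claim $\int_e df=(f(v'_+)-f(v'_-))/2$ for both a black and a white edge is equivalent to the linear system $\partial_\Lambda f(Q)\,D_b+\bar\partial_\Lambda f(Q)\,\overline{D_b}=f(b_+)-f(b_-)$ and $\partial_\Lambda f(Q)\,D_w+\bar\partial_\Lambda f(Q)\,\overline{D_w}=f(w_+)-f(w_-)$. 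I would verify this by evaluating $\partial_\Lambda f(Q)$ and $\bar\partial_\Lambda f(Q)$ directly: the contour $\partial F_Q$ runs through the four edge midpoints of $Q$, on each medial edge $f$ equals its value at the incident vertex (by the product rule $\int_e f\omega=f(v)\int_e\omega$), and the Varignon parallelogram has area one quarter of $\im(\overline{D_b}D_w)$, so that $\iint_{F_Q}\Omega_\Diamond^{z_Q}=\tfrac12(D_b\overline{D_w}-D_w\overline{D_b})$. Substituting into $\oint_{\partial F_Q}f\,d\bar z_Q$ and $\oint_{\partial F_Q}f\,dz_Q$ then yields exactly the solution of the above $2\times2$ system, proving the first formula.

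For the second identity I would first observe that both sides are additive over faces: the left side by the definition of the discrete integral of a two-form, and the right side because an edge shared by two faces of $F$ occurs in their boundaries with opposite orientations and cancels, leaving only $\partial F$. Hence it suffices to prove $\iint_F d\omega=\oint_{\partial F}\omega$ for a single face $F=F_Q$ or $F=F_v$. Writing $\omega=p\,dz+q\,d\bar z$ in a chart $z$ around the face, the definition $d\omega|_{F_Q}=(\partial_\Lambda q-\bar\partial_\Lambda p)\Omega_\Diamond^z$ together with the defining relations $\partial_\Lambda q\iint_{F_Q}\Omega_\Diamond^z=\oint_{\partial F_Q}q\,d\bar z$ and $\bar\partial_\Lambda p\iint_{F_Q}\Omega_\Diamond^z=-\oint_{\partial F_Q}p\,dz$ telescope immediately to $\iint_{F_Q}d\omega=\oint_{\partial F_Q}(p\,dz+q\,d\bar z)=\oint_{\partial F_Q}\omega$; the case $F_v$ is identical with $\Omega_\Lambda^z$, $\partial_\Diamond$, $\bar\partial_\Diamond$ in place of $\Omega_\Diamond^z$, $\partial_\Lambda$, $\bar\partial_\Lambda$. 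Since the right-hand side depends only on $\omega$ and not on the representation $p\,dz+q\,d\bar z$ or on the chart, this same computation establishes that $d\omega$ is well-defined, as announced before the theorem.

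The routine part is thus the direct unwinding of the definitions; the step demanding genuine care is the sign and orientation bookkeeping in the first formula — in particular making the factor $-4i$ and the signed area of the Varignon parallelogram consistent with the counterclockwise orientation of $\partial F_Q$, so that the computed $\partial_\Lambda f(Q)$ and $\bar\partial_\Lambda f(Q)$ really solve the linear system and not a sign-flipped variant.
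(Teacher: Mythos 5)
Your proposal is correct, and it follows essentially the same route as the paper's source: the paper omits the proof, stating that it translates literally from \cite{BoG15}, where discrete Stokes' theorem is likewise obtained by unwinding the definitions — computing $\partial_\Lambda f$ and $\bar\partial_\Lambda f$ from the contour integrals over $\partial F_Q$ (your identity $\iint_{F_Q}\Omega_\Diamond^{z_Q}=\tfrac12\bigl(D_b\overline{D_w}-D_w\overline{D_b}\bigr)$ and the resulting $2\times 2$ system check out, with the orientation convention $\re\rho_Q>0$ guaranteeing $\im\bigl(\overline{D_b}D_w\bigr)>0$), and reducing the second identity by face-wise cancellation to the single-face case, where it is immediate from the definition of $d\omega$. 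Your closing observation that the chart-independence of $d\omega$ is a byproduct of this computation matches the paper's remark preceding the theorem.
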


\begin{definition}
Let $\Diamond_0 \subseteq \Diamond$ form a simply-connected closed region. A discrete one-form $\omega$ defined on oriented edges of $X_0$ is said to be \textit{closed} if $d\omega\equiv 0$.
\end{definition}

\begin{proposition}\label{prop:dd0}
 Let $f:V(\Lambda) \to \mC$. Then, $ddf=0$.
\end{proposition}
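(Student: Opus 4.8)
The plan is to derive the vanishing of $ddf$ entirely from discrete Stokes' Theorem~\ref{th:stokes}, bypassing the piecewise formulas for the exterior derivative. Since $f$ is defined on all of $V(\Lambda)$, the discrete one-form $df$ is defined on every oriented edge of the whole medial graph $X$, and hence so is $ddf$. By the second identity of Theorem~\ref{th:stokes}, we have $\iint_F ddf=\oint_{\partial F} df$ for every face $F$ of $X$. As a discrete two-form is by definition just a complex function on $F(X)$, it will be enough to prove that $\oint_{\partial F} df=0$ for every $F\in F(X)$.

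The key step is to exhibit $df$ as an exact form in an elementary combinatorial sense. I would introduce a function $g$ on the vertices of $X$, that is, on the midpoints of edges of $\Lambda$, by setting $g(m_{vw}):=\bigl(f(v)+f(w)\bigr)/2$ for the midpoint $m_{vw}$ of an edge $vw$ of $\Lambda$. The first identity of Theorem~\ref{th:stokes} then reads exactly as a difference of $g$ at the endpoints of a medial edge: for any oriented edge $e$ of $X$ starting in $m_{vv'_-}$ and ending in $m_{vv'_+}$ one has
\[
\int_e df=\frac{f(v)+f(v'_+)}{2}-\frac{f(v)+f(v'_-)}{2}=g(m_{vv'_+})-g(m_{vv'_-}).
\]
Consequently, along any closed path $P=e_1,\ldots,e_n$ in $X$ the integral $\oint_P df=\sum_k\int_{e_k} df$ telescopes, since each summand is the increment of $g$ along $e_k$ and these increments cancel around a closed loop. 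Applying this to the counterclockwise boundary $\partial F$ of an arbitrary face of $X$—whether $F=F_v$ runs through the midpoints of the edges incident to $v$, or $F=F_Q$ is the Varignon cell of $Q$—yields $\oint_{\partial F} df=0$, and therefore $\iint_F ddf=0$ for all $F\in F(X)$, so $ddf\equiv 0$.

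I do not expect any serious obstacle here; the argument is the discrete shadow of the classical identity $d^2=0$ for an exact form. The only point requiring genuine care is the orientation bookkeeping in the second paragraph: one must read the first Stokes identity correctly as a difference of the single vertex function $g$ between the start and the end of each medial edge, so that the telescoping is valid regardless of whether the traversed edges are black or white. Once $df=dg$ is established in this discrete sense, the closedness of the face boundaries of $X$ forces $\oint_{\partial F}df=0$ automatically, and the result follows without further computation.
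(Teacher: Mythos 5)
Your proof is correct and takes essentially the same route as the paper's (which, per Section~\ref{sec:differentials}, is literally the one from \cite{BoG15}): apply the second identity of discrete Stokes' Theorem~\ref{th:stokes} to get $\iint_F ddf=\oint_{\partial F}df$, and then use the first identity to see that $\oint_{\partial F}df$ telescopes to zero around every face boundary of $X$. Your midpoint function $g$ on $V(X)$ is merely an explicit bookkeeping device for that telescoping, so nothing genuinely new is added or missing.
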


\begin{corollary}\label{cor:commutativity}
Let $f$ be a function defined on the vertices of all quadrilaterals incident to $v \in V(\Lambda)$. Then, $\partial_\Diamond\bar{\partial}_\Lambda f(v)=\bar{\partial}_\Diamond\partial_\Lambda f(v)$ in a chart $z_v$ of the star of $v$. In particular, $\partial_\Lambda f$ is discrete holomorphic in $z_v$ if $f$ is discrete holomorphic.
\end{corollary}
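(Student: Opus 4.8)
The plan is to read the asserted commutation relation directly off Proposition~\ref{prop:dd0}, applied to the discrete one-form $\omega:=df$. First I would form $df=\partial_\Lambda f\,dz_v+\bar{\partial}_\Lambda f\,d\bar{z}_v$ in the chart $z_v$, where $\partial_\Lambda f$ and $\bar{\partial}_\Lambda f$ are the functions on the quadrilaterals $Q\sim v$, computed in the charts of the $Q$ induced by $z_v$. Since $f$ is prescribed on the vertices of all quadrilaterals incident to $v$, both coefficients are defined on every face of the star of $v$, so $\omega$ is a genuine discrete one-form on all boundary edges of $F_v$ and $d\omega|_{F_v}$ is meaningful. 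Comparing with the representation $\omega=p\,dz_v+q\,d\bar{z}_v$ occurring in the definition of $d\omega$, I identify $p=\partial_\Lambda f$ and $q=\bar{\partial}_\Lambda f$.

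Next I would invoke the defining formula for the exterior derivative of a one-form on the face $F_v$,
\[
d\omega|_{F_v}=\left(\partial_\Diamond q-\bar{\partial}_\Diamond p\right)\Omega_\Lambda^{z_v}=\left(\partial_\Diamond\bar{\partial}_\Lambda f(v)-\bar{\partial}_\Diamond\partial_\Lambda f(v)\right)\Omega_\Lambda^{z_v}.
\]
On the other hand $\omega=df$ gives $d\omega=ddf$, which vanishes by Proposition~\ref{prop:dd0}; note that this is a purely local statement at $F_v$, so the fact that $f$ is only given near $v$ is no obstruction. Taking $\iint_{F_v}$ of the displayed two-form and using that $\iint_{F_v}\Omega_\Lambda^{z_v}=-4i\,\textnormal{area}(z_v(F_v))\neq0$ --- this nonvanishing being exactly what makes $\partial_\Diamond$ and $\bar{\partial}_\Diamond$ well-defined at $v$ --- forces the scalar factor to be zero. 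That is precisely the identity $\partial_\Diamond\bar{\partial}_\Lambda f(v)=\bar{\partial}_\Diamond\partial_\Lambda f(v)$.

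For the final clause I would use that discrete holomorphicity of $f$ means $\bar{\partial}_\Lambda f(Q)=0$ for every quadrilateral $Q$, a condition independent of the chart; hence $Q\mapsto\bar{\partial}_\Lambda f(Q)$ is identically zero on the star of $v$, and therefore $\partial_\Diamond\bar{\partial}_\Lambda f(v)=0$. The commutation relation just established then yields $\bar{\partial}_\Diamond\partial_\Lambda f(v)=0$, which is the definition of $\partial_\Lambda f$ being discrete holomorphic in the chart $z_v$. The one point I expect to require genuine care --- and which I regard as the main obstacle --- is the chart bookkeeping in the first paragraph: one must verify that expanding the chart-independent one-form $df$ in $z_v$ really reproduces the coefficients $\partial_\Lambda f,\bar{\partial}_\Lambda f$ computed in the induced charts, using compatibility of $z_v$ with the discrete complex structure together with the scaling behaviour of the products $\partial_\Lambda f\,dz$ and $\bar{\partial}_\Lambda f\,d\bar{z}$ under changes of compatible charts, so that Proposition~\ref{prop:dd0} may legitimately be applied in the single chart $z_v$ at $F_v$.
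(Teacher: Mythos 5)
Your proposal is correct and takes exactly the route the paper intends: the statement is read off from Proposition~\ref{prop:dd0} by expanding $ddf|_{F_v}=\left(\partial_\Diamond\bar{\partial}_\Lambda f(v)-\bar{\partial}_\Diamond\partial_\Lambda f(v)\right)\Omega_\Lambda^{z_v}$ in the chart $z_v$ and dividing out the nonvanishing factor $\iint_{F_v}\Omega_\Lambda^{z_v}$, with the holomorphicity clause following since $\bar{\partial}_\Lambda f\equiv 0$ is chart-independent. Your care about locality and about identifying $p=\partial_\Lambda f$, $q=\bar{\partial}_\Lambda f$ in the single chart $z_v$ matches the paper, which itself notes (in the proof of Lemma~\ref{lem:harmonic_forms}) that Proposition~\ref{prop:dd0} is also locally true.
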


\begin{corollary}\label{cor:f_holomorphic}
 Let $f:V(\Lambda) \to \mC$. Then, $f$ is discrete holomorphic at all faces incident to $v\in V(\Lambda)$ if and only if in a chart $z_v$ around $v$, $df=p dz_v$ for some function $p$ defined on the faces incident to $v$. In this case, $p$ is discrete holomorphic in $z_v$.
\end{corollary}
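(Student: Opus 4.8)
The plan is to carry out everything in a fixed chart $z_v$ of the star of $v$, whose existence is guaranteed by Proposition~\ref{prop:charts}, and to exploit that by definition $df=\partial_\Lambda f\,dz_v+\bar\partial_\Lambda f\,d\bar z_v$ on the edges in the star. Recall that discrete holomorphicity of $f$ at a face $Q$ is equivalent to $\bar\partial_\Lambda f(Q)=0$ and, crucially, does not depend on the chart used. The whole statement will then reduce to the assertion that the representation of $df$ in the form $p\,dz_v+q\,d\bar z_v$ with $p,q$ defined on the faces incident to $v$ is unique, which forces $p=\partial_\Lambda f$ and $q=\bar\partial_\Lambda f$ on each such face.

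The forward direction is immediate: if $f$ is discrete holomorphic at every $Q\sim v$, then $\bar\partial_\Lambda f(Q)=0$ for all such $Q$, whence $df=\partial_\Lambda f\,dz_v$ and one may take $p=\partial_\Lambda f$. For the converse, suppose $df=p\,dz_v$ for some $p$ defined on the faces incident to $v$. Comparing with the defining expression yields $(p-\partial_\Lambda f)\,dz_v=\bar\partial_\Lambda f\,d\bar z_v$ on every edge $[Q,v']$ with $Q\sim v$, and I would restrict this identity to the four edges of a single face $F_Q$. There the coefficients $p(Q),\partial_\Lambda f(Q),\bar\partial_\Lambda f(Q)$ are constant, so it suffices to know that $dz_v$ and $d\bar z_v$, viewed as functions on the edges of $F_Q$, are linearly independent; this forces $\bar\partial_\Lambda f(Q)=0$, i.e.\ discrete holomorphicity of $f$ at $Q$.

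The main point is therefore this linear independence, and I expect it to be the only place where the hypothesis $\re\rho_Q>0$ enters. Evaluating $dz_v$ and $d\bar z_v$ on one black and one white edge of $F_Q$, their values are proportional to the black diagonal $z_v(b_+)-z_v(b_-)$ and the white diagonal $z_v(w_+)-z_v(w_-)$, whose oriented ratio is $i\rho_Q$. A short computation shows that the relevant $2\times 2$ determinant is a nonzero multiple of $\re\rho_Q$, equivalently that the two diagonals are not parallel since $0<\varphi_Q<\pi$; this yields the desired uniqueness. Finally, for the additional claim that $p$ is discrete holomorphic in $z_v$, observe that in the holomorphic case $p=\partial_\Lambda f$, and Corollary~\ref{cor:commutativity} states precisely that $\partial_\Lambda f$ is discrete holomorphic in $z_v$ whenever $f$ is, so no further work is needed.
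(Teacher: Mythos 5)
Your proposal is correct and follows essentially the argument the paper intends (the proof is omitted in the paper with a reference to \cite{BoG15}): the equivalence of discrete holomorphicity with $\bar{\partial}_\Lambda f(Q)=0$ in any chart, the uniqueness of the representation $p\,dz_v+q\,d\bar{z}_v$ on $\partial F_Q$ --- which the paper itself asserts at the start of Section~\ref{sec:wedge} and which you correctly justify by the determinant $z_v(e)\overline{z_v(e^*)}-\overline{z_v(e)}z_v(e^*)=-2i|z_v(e)|^2\re\rho_Q\neq 0$ --- and Corollary~\ref{cor:commutativity} for the discrete holomorphicity of $p=\partial_\Lambda f$. No gaps.
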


\begin{definition}
A discrete differential $\omega$ of type $\Diamond$ is \textit{discrete holomorphic} if $d\omega=0$ and if in any chart $z_Q$ of a quadrilateral $Q\in V(\Diamond)$, $\omega=p dz_Q$. $\omega$ is \textit{discrete antiholomorphic} if $\bar{\omega}$ is discrete holomorphic. 
\end{definition}
\begin{remark}
It suffices to check this condition for just one chart of $Q$, as follows from Lemma~\ref{lem:Hodge_projection} below. In particular, discrete holomorphicity of discrete one-forms depends on the discrete complex structure only. If $\omega$ is discrete holomorphic, then we can write $\omega=p dz_v$ in a chart $z_v$ around $v \in V(\Lambda)$, where $p$ is a function defined on the faces incident to $v$. In this case, $p$ is discrete holomorphic in $z_v$. Conversely, the closeness condition can be replaced by requiring that $p$ is discrete holomorphic.
\end{remark}

\begin{proposition}\label{prop:primitive2}
Let $\Diamond_0 \subseteq \Diamond$ form a simply-connected closed region and let $\omega$ be a closed discrete differential of type $\Diamond$ defined on oriented edges of $X_0$. Then, there is a function $f:=\int \omega :V(\Lambda_0)\to\mC$ such that $\omega=df$. $f$ is unique up to two additive constants on $\Gamma_0$ and $\Gamma^*_0$. If $\omega$ is discrete holomorphic, then $f$ is as well.
\end{proposition}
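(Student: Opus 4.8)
The plan is to mimic the classical construction of a primitive by integration, using discrete Stokes' theorem as the essential tool. First I would fix a base vertex $v_0 \in V(\Lambda_0)$ and, for any other vertex $v$, define $f(v):=\oint_P \omega$, where $P$ is a directed path in $X_0$ running from a point near $v_0$ to a point near $v$. The subtlety here is that $\omega$ lives on the medial graph $X_0$, whose vertices are edge-midpoints of $\Lambda_0$, so I must be careful about exactly which path in $X_0$ represents ``integrating from $v_0$ to $v$.'' The natural choice, dictated by the first identity in Theorem~\ref{th:stokes}, is to use paths that track the progression through quadrilaterals: since $\int_e df = \tfrac{1}{2}(f(v'_+)-f(v'_-))$ for an edge $e=[Q,v]$ of $X_0$, integrating $\omega$ along such edges recovers successive half-differences of the primitive along the diagonals of $\Gamma_0$ and $\Gamma^*_0$.

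The key step is well-definedness, i.e.\ path-independence of $f(v)$. Two directed paths $P,P'$ in $X_0$ with the same endpoints differ by a closed loop, and since $\Diamond_0$ forms a simply-connected closed region, that loop bounds a finite collection of faces $F$ of $X_0$. Discrete Stokes' theorem then gives $\oint_{\partial F}\omega = \iint_F d\omega = 0$ because $\omega$ is closed. Hence $\oint_P\omega = \oint_{P'}\omega$, so $f$ is well-defined up to the choice of value at the starting point. Because $X_0$ separates into black and white edges and these never directly connect a $\Gamma_0$-vertex to a $\Gamma^*_0$-vertex through a single integration, the construction fixes $f$ on $V(\Gamma_0)$ and on $V(\Gamma^*_0)$ independently; this is precisely the source of the two additive constants, one for each color class. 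Having established this, the relation $\omega = df$ follows edge by edge: evaluating $\int_e df$ via the first formula in Theorem~\ref{th:stokes} and comparing with $\int_e \omega$ (which by the path-definition of $f$ equals exactly that half-difference) shows the two one-forms agree on every oriented edge of $X_0$.

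I expect the main obstacle to be the bookkeeping around the two independent additive constants and verifying that the medial-graph integration genuinely produces the stated half-difference structure rather than something off by a factor. Concretely, one must check that integrating $\omega$ along a black edge of $X_0$ updates only the $\Gamma_0$-values and along a white edge only the $\Gamma^*_0$-values, so that the primitive is consistently defined on each color class and that the freedom is exactly two-dimensional. This requires tracing through the identification of vertices of $X_0$ with edge-midpoints and confirming connectivity of the black and white subgraphs on a simply-connected region.

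Finally, for the holomorphicity claim, suppose $\omega$ is discrete holomorphic, so in every chart $z_Q$ we have $\omega = p\,dz_Q$ with vanishing $d\bar z_Q$-component. By the definition of $df$ as $\partial_\Lambda f\,dz_Q + \bar\partial_\Lambda f\,d\bar z_Q$ and the uniqueness of $\omega=df$, the $d\bar z_Q$-component must vanish, giving $\bar\partial_\Lambda f(Q)=0$ on every $Q\in V(\Diamond_0)$. Hence $f$ is discrete holomorphic, completing the proof.
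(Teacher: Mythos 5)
Your overall strategy (integrate along paths, use discrete Stokes for path-independence, separate the black and white color classes to account for the two constants, and read off holomorphicity from the unique representation $\omega=p\,dz_Q+q\,d\bar z_Q$) is the right one, and it is the approach of the proof the paper refers to. But there is a genuine gap: nowhere do you invoke the hypothesis that $\omega$ is of type $\Diamond$, i.e.\ that $\omega([Q,v])=-\omega([Q,v'])$ for the two black (resp.\ white) vertices $v,v'$ of each quadrilateral $Q$, with orientations from $\partial F_Q$. This hypothesis is not bookkeeping; it is exactly what makes the proposition true. Your Stokes argument (two paths in $X_0$ with the same endpoints differ by a loop bounding faces, so their integrals agree) only produces a well-defined function $g$ on the \emph{vertices of $X_0$}, i.e.\ on the midpoints of edges of $\Lambda_0$, playing the role of $\bigl(f(u)+f(v)\bigr)/2$ on the midpoint of $uv$. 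Descending from $g$ to a function $f$ on $V(\Lambda_0)$ requires, on every quadrilateral, that the alternating sum of the four midpoint values vanishes, and a short computation shows this is precisely the type $\Diamond$ condition. Equivalently: your claim that ``$\int_e\omega$ equals exactly that half-difference'' for \emph{every} edge $e$ needs the two parallel medial edges inside a face $F_Q$ to assign the same increment to the same diagonal, which again is the type $\Diamond$ identity. The claim genuinely fails without it: since $df$ is automatically of type $\Diamond$ (by the first formula of Theorem~\ref{th:stokes}, $\int_e df$ depends only on the diagonal $v'_-v'_+$), a closed discrete one-form that is not of type $\Diamond$ admits no primitive on $V(\Lambda_0)$ at all, even on a simply-connected region.

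Relatedly, your color-separation step needs a sharper argument than ``Stokes on loops of $X_0$'': a loop in $X_0$ mixes black and white edges, so $\oint_P\omega=0$ only controls the \emph{sum} of the black and white contributions, not each separately. The correct route is to use type $\Diamond$ to push $\omega$ down to one-forms on $\Gamma_0$ and $\Gamma_0^*$; note that for type $\Diamond$ forms the condition $\oint_{\partial F_Q}\omega=0$ holds automatically (the four terms cancel in pairs), so the actual content of $d\omega=0$ is $\oint_{\partial F_v}\omega=0$ around vertices $v$, which says the induced form on $\Gamma_0$ (resp.\ $\Gamma_0^*$) has vanishing integral around every face, i.e.\ around every white (resp.\ black) vertex. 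Simple connectivity of the region then gives path-independence separately on $\Gamma_0$ and on $\Gamma_0^*$, producing $f$ up to one additive constant per color class, with connectivity of $\Gamma_0$ and $\Gamma_0^*$ ensuring the freedom is exactly two-dimensional. With this repaired, your verification that $\omega=df$ edge by edge, the uniqueness statement, and the holomorphicity argument via the uniqueness of the local representation $p\,dz_Q+q\,d\bar z_Q$ all go through as you describe.
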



\subsection{Discrete wedge product}\label{sec:wedge}

Let $\omega$ be a discrete one-form of type $\Diamond$. Then, for any chart $z_Q$ of a quadrilateral $Q\in V(\Diamond)$ there is a unique representation $\omega|_{\partial F_Q}=p dz_Q+q d\bar{z}_Q$ with complex numbers $p$ and $q$. To calculate them, one can first construct a function $f$ on the vertices of $Q$ such that $\omega|_{\partial F_Q}=df$ and then take $p=\partial_\Lambda f$ and $q=\bar{\partial}_\Lambda f$, see \cite{BoG15}.

\begin{definition}
Let $\omega,\omega'$ be two discrete one-forms of type $\Diamond$ defined on the oriented edges of $X_0$. Then, the \textit{discrete wedge product} $\omega\wedge\omega'$ is defined as the discrete two-form of type $\Diamond$ that equals \[\left(pq'-qp'\right)\Omega_\Diamond^{z_Q}\] on a face $F_Q$ corresponding to $Q\in V(\Diamond)$. Here, $z_Q$ is a chart of $Q$ and $\omega|_{\partial F_Q}=p dz_Q+ q d\bar{z}_Q$ and $\omega'|_{\partial F_Q}=p' dz_Q+ q' d\bar{z}_Q$.
\end{definition}

The following proposition connects our definition of a discrete wedge product with Mercat's in \cite{Me01,Me07,Me08} and also shows that the discrete wedge product does not depend on the choice of the chart.

\begin{proposition}\label{prop:wedge_Mercat}
Let $F_Q$ be the face of $X$ corresponding to $Q\in V(\Diamond)$, let $z_Q$ be a chart, and let $e,e^*$ be the oriented edges of $X$ parallel to the black and white diagonal of $Q$, respectively, such that $\im \left(z_Q(e^*)/z_Q(e)\right)>0$. Then, \[\iint_{F_Q} \omega\wedge\omega' =  2\int_e \omega \int_{e^*} \omega'- 2\int_{e^*} \omega \int_e \omega'.\]
\end{proposition}

Finally, the discrete exterior derivative is a derivation for the wedge product:

\begin{theorem}\label{th:derivation}
Let $f:V(\Lambda_0) \to \mC$ and $\omega$ be a discrete one-form of type $\Diamond$ defined on the oriented edges of $X_0$. Then, the following identity holds on $F(X_0)$: \[d(f\omega)=df\wedge\omega+fd\omega.\]
\end{theorem}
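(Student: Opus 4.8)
The plan is to verify the claimed identity of discrete two-forms face by face, treating separately the faces $F_v$ corresponding to $v\in V(\Lambda_0)$ and the faces $F_Q$ corresponding to $Q\in V(\Diamond_0)$. First I would record two structural observations that make the right-hand side meaningful. Applying discrete Stokes' Theorem~\ref{th:stokes} to the oriented edges of $\partial F_Q$ shows that $\int_{[Q,v]}df=(f(v'_+)-f(v'_-))/2$, where $v'_\pm$ are the two vertices of $Q$ of opposite color to $v$; evaluating this on the two black (or two white) vertices of $Q$ with their $\partial F_Q$-orientations gives opposite values, so $df$ is a discrete one-form of \emph{type} $\Diamond$, and hence $df\wedge\omega$ is well-defined and again of type $\Diamond$. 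The product $f\,d\omega$ is formed with the type-$\Lambda$ part of $d\omega$, so it vanishes on every $F_Q$ and equals $f(v)\,d\omega$ on every $F_v$.

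On a face $F_v$ the identity is immediate: since $df\wedge\omega$ is of type $\Diamond$ it vanishes on $F_v$, so the right-hand side reduces to $\iint_{F_v}f\,d\omega=f(v)\iint_{F_v}d\omega$. For the left-hand side, Stokes' Theorem gives $\iint_{F_v}d(f\omega)=\oint_{\partial F_v}f\omega$; the boundary edges of $F_v$ are exactly the edges $[Q_s,v]$, on which $(f\omega)([Q_s,v])=f(v)\,\omega([Q_s,v])$, so the constant $f(v)$ factors out and a second application of Stokes to $\omega$ yields $f(v)\oint_{\partial F_v}\omega=f(v)\iint_{F_v}d\omega$. The two sides agree.

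The content of the theorem is therefore concentrated on the faces $F_Q$, where $f\,d\omega$ contributes nothing and one must show $\iint_{F_Q}d(f\omega)=\iint_{F_Q}(df\wedge\omega)$. For the left-hand side I would apply Stokes once more, $\iint_{F_Q}d(f\omega)=\oint_{\partial F_Q}f\omega$, expand the integral over the four boundary edges $[Q,b_-],[Q,w_-],[Q,b_+],[Q,w_+]$ in counterclockwise order, and use the type-$\Diamond$ relations $\int_{[Q,b_-]}\omega=-\int_{[Q,b_+]}\omega$ and $\int_{[Q,w_-]}\omega=-\int_{[Q,w_+]}\omega$ to collect the result into $A\,(f(b_-)-f(b_+))+B\,(f(w_-)-f(w_+))$, where $A$ and $B$ abbreviate the $\omega$-integrals along the edges parallel to the white and black diagonal of $Q$. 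For the right-hand side I would evaluate $df\wedge\omega$ via Proposition~\ref{prop:wedge_Mercat}, inserting $\int_e df=(f(b_+)-f(b_-))/2$ and $\int_{e^*}df=(f(w_+)-f(w_-))/2$ from Stokes together with the matching $\omega$-integrals $\int_e\omega$ and $\int_{e^*}\omega$; the factors of $2$ cancel and the two expressions coincide.

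The main obstacle is bookkeeping rather than conceptual: one must fix the counterclockwise orientation of $\partial F_Q$ once and for all and then track how it relates to the orientations of the diagonal-parallel edges $e,e^*$ prescribed in Proposition~\ref{prop:wedge_Mercat} (the condition $\im(z_Q(e^*)/z_Q(e))>0$ amounts to $\re(\rho_Q)>0$, which is precisely the standing assumption on the discrete complex structure). Making every sign in the expansion over the four boundary edges line up with the signs produced by Proposition~\ref{prop:wedge_Mercat} is the only delicate point; once the orientation conventions are pinned down, the matching is a direct computation.
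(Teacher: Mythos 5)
Your proof is correct. Note first that the paper does not actually prove Theorem~\ref{th:derivation} itself: Section~\ref{sec:differentials} defers all omitted proofs to \cite{BoG15}, where the argument is the same face-by-face verification you give, so your overall strategy matches the intended one. Your treatment of $F_v$ (double application of Stokes after factoring out $f(v)$, plus the observation that $df\wedge\omega$, being of type $\Diamond$, vanishes there) is exactly right, as is the preliminary check that $df$ is of type $\Diamond$, which is what makes $df\wedge\omega$ well-defined; one can also note that $d\omega$ is not merely ``used through its type-$\Lambda$ part'' but is genuinely of type $\Lambda$, since the contour integral of a type-$\Diamond$ form around any $F_Q$ vanishes. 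The one substantive variation from \cite{BoG15} is on the faces $F_Q$: there the computation is done in a chart, writing $\omega=p\,dz_Q+q\,d\bar{z}_Q$ with constants $p,q$ and comparing $d(f\omega)|_{F_Q}=\left(\partial_\Lambda(fq)-\bar{\partial}_\Lambda(fp)\right)\Omega_\Diamond^{z_Q}$ with $\left(\partial_\Lambda f\,q-\bar{\partial}_\Lambda f\,p\right)\Omega_\Diamond^{z_Q}$, whereas you route through Proposition~\ref{prop:wedge_Mercat}; the two are equivalent, and your version has the advantage of avoiding coordinates entirely. For the record, the sign bookkeeping you flag as the delicate point does close up: with vertices $b_-,w_-,b_+,w_+$ in counterclockwise order, take $e=[Q,w_-]$ and $e^*=[Q,b_+]$ with their $\partial F_Q$-orientations, so that $2z_Q(e)=z_Q(b_+)-z_Q(b_-)$ and $2z_Q(e^*)=z_Q(w_+)-z_Q(w_-)$, whence $z_Q(e^*)/z_Q(e)=i\rho_Q$ and $\im\left(i\rho_Q\right)=\re\left(\rho_Q\right)>0$ as you say; then with $A=\int_{[Q,b_-]}\omega=-\int_{e^*}\omega$ and $B=\int_e\omega$, Proposition~\ref{prop:wedge_Mercat} gives $2\int_e df\int_{e^*}\omega-2\int_{e^*}df\int_e\omega=A\left(f(b_-)-f(b_+)\right)+B\left(f(w_-)-f(w_+)\right)$, which is precisely $\oint_{\partial F_Q}f\omega$ after the type-$\Diamond$ cancellations.
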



\subsection{Discrete Hodge star and discrete Laplacian}\label{sec:hodge}

\begin{definition}
Let $\Omega_{\Sigma}$ be a fixed nowhere vanishing discrete two-form, $f:F(\Lambda_0)\to\mC$, $h:V(\Diamond_0)\to\mC$, $\omega$ a discrete one-form of type $\Diamond$ defined on oriented edges of $X_0$, and $\Omega$ a discrete two-form either of type $\Lambda$ or $\Diamond$. In a chart $z_Q$ of $Q\in V(\Diamond)$, we write $\omega|_{\partial F_Q}=p dz_Q +qd\bar{z}_Q$. Then, the \textit{discrete Hodge star} is defined by \begin{align*} \star f:= f \Omega_\Sigma; \quad \star h:= h \Omega_\Sigma; \quad \star \omega|_{\partial F_Q}:=-ip dz_Q+iq d\bar{z}_Q;\quad \star \Omega&:=\frac{\Omega}{\Omega_\Sigma}.\end{align*}
\end{definition}

\begin{remark}
In the planar case, the choice of $\Omega_{\Sigma}=i/2 \Omega_\Diamond^z$ on faces of $X$ corresponding to faces of the quad-graph and $\Omega_{\Sigma}=i/2 \Omega_\Lambda^z$ on faces corresponding to vertices is the most natural one. Throughout the remainder of this chapter, $\Omega_{\Sigma}$ is a fixed positive real two-form on $(\Sigma,\Lambda,z)$.

In the classical setup, there is a canonical nonvanishing two-form coming from a complete Riemannian metric of constant curvature. An interesting question is whether there exists some canonical two-form for discrete Riemann surfaces as we defined them. Note that the nonlinear theory developed in \cite{BoPSp10} contains a uniformization of discrete Riemann surfaces and discrete metrics with constant curvature.
\end{remark}

\begin{proposition}\label{prop:hodge_Mercat}
Let $Q\in V(\Diamond)$ with chart $z_Q$, and let $e,e^*$ be oriented edges of $X$ parallel to the black and white diagonal of $Q$, respectively, such that $\im \left(e^*/e\right)>0$. If $\omega$ is a discrete one-form of type $\Diamond$ defined on the oriented edges of the boundary of the face of $X$ corresponding to $Q$, then
 \begin{align*}
 \int_e \star\omega&=\cot\left(\varphi_Q\right) \int_e \omega-\frac{|z_Q(e)|}{|z_Q(e^*)| \sin\left(\varphi_Q\right)}\int_{e^*}\omega,\\
 \int_{e^*} \star\omega&=\frac{|z_Q(e^*)|}{|z_Q(e)| \sin\left(\varphi_Q\right)} \int_e \omega-\cot\left(\varphi_Q\right)\int_{e^*}\omega.
\end{align*}
\end{proposition}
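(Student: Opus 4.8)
The plan is to work entirely inside a single chart $z_Q$, reducing the proposition to a direct computation of the action of the Hodge star on the basis one-forms $dz_Q$ and $d\bar z_Q$, and then re-expressing the result in terms of the integrals $\int_e\omega$ and $\int_{e^*}\omega$. First I would use the representation $\omega|_{\partial F_Q}=p\,dz_Q+q\,d\bar z_Q$, so that by definition $\star\omega|_{\partial F_Q}=-ip\,dz_Q+iq\,d\bar z_Q$. The strategy is to treat $\left(\int_e\omega,\int_{e^*}\omega\right)$ and $(p,q)$ as two pairs of linear coordinates on the two-dimensional space of $\Diamond$-type one-forms on $F_Q$, compute the linear change of coordinates between them, conjugate the diagonal matrix $\operatorname{diag}(-i,i)$ representing $\star$ in the $(p,q)$-basis by this change of coordinates, and read off the claimed formulas.

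Concretely, the key algebraic inputs are the values $z_Q(e)$ and $z_Q(e^*)$ of the two basis edges, which are (up to the factor $\tfrac12$ noted in Section~\ref{sec:medial}) the oriented black and white half-diagonals of the image quadrilateral. Since $\int_e dz_Q=z_Q(e)$ and $\int_e d\bar z_Q=\overline{z_Q(e)}$, and likewise for $e^*$, I would write
\begin{align*}
\int_e\omega&=p\,z_Q(e)+q\,\overline{z_Q(e)},\\
\int_{e^*}\omega&=p\,z_Q(e^*)+q\,\overline{z_Q(e^*)}.
\end{align*}
Inverting this $2\times 2$ system expresses $p,q$ in terms of $\int_e\omega,\int_{e^*}\omega$; the determinant is $z_Q(e)\overline{z_Q(e^*)}-\overline{z_Q(e)}z_Q(e^*)=-2i\,\im\!\left(\overline{z_Q(e)}z_Q(e^*)\right)$, which is nonzero precisely because $\im\left(z_Q(e^*)/z_Q(e)\right)>0$, so the two diagonals are genuinely transverse. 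Substituting $p\mapsto -ip$, $q\mapsto iq$ and reassembling $\int_e\star\omega$ and $\int_{e^*}\star\omega$ via the same evaluation formulas then produces explicit coefficients.

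The remaining task is to recognize those coefficients as the trigonometric expressions in the statement. Here I would write $z_Q(e^*)/z_Q(e)=(|z_Q(e^*)|/|z_Q(e)|)e^{i\psi}$ and identify the angle $\psi$ between the half-diagonals with $\varphi_Q$; recall from the definition of $\varphi_Q$ that it is exactly the intersection angle of the diagonal lines, and the orientation condition $\im\left(z_Q(e^*)/z_Q(e)\right)>0$ fixes the sign so that $\sin\varphi_Q>0$. Then every coefficient in the inverted system becomes a ratio of $|z_Q(e)|$, $|z_Q(e^*)|$, and $\sin\varphi_Q$, while the diagonal entries collapse to $\cot\varphi_Q$ using $\cos\varphi_Q/\sin\varphi_Q$. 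I expect the main obstacle to be purely bookkeeping: tracking the factors of $e^{\pm i\varphi_Q}$ and the complex conjugations so that the off-diagonal entries come out as the stated $\pm|z_Q(e)|/(|z_Q(e^*)|\sin\varphi_Q)$ and its reciprocal, with the correct signs, rather than any genuine conceptual difficulty. A useful sanity check is the real-weight case $\varphi_Q=\pi/2$ (orthogonal diagonals), where $\cot\varphi_Q=0$ and the formulas reduce to the familiar Hodge star swapping $e$ and $e^*$ up to the ratio of diagonal lengths.
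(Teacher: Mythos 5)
Your proposal is correct and is essentially the paper's own argument: the paper omits this proof, noting it translates literally from \cite{BoG15}, and that proof is precisely your direct chart computation --- write $\omega|_{\partial F_Q}=p\,dz_Q+q\,d\bar z_Q$, evaluate on the half-diagonal edges $e,e^*$, invert the $2\times 2$ system, and identify $\arg\left(z_Q(e^*)/z_Q(e)\right)=\arg(i\rho_Q)=\varphi_Q$. Your sanity check at $\varphi_Q=\pi/2$ and the observation that the orientation condition $\im\left(z_Q(e^*)/z_Q(e)\right)>0$ both guarantees invertibility and fixes $\sin\varphi_Q>0$ are exactly the right bookkeeping.
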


Proposition~\ref{prop:hodge_Mercat} shows not only that our definition of a discrete Hodge star on discrete one-forms does not depend on the chosen chart, but also that it coincides with Mercat's definition given in \cite{Me08}.

Clearly, $\star^2=-\textnormal{Id}$ on discrete differentials of type $\Diamond$ and $\star^2=\textnormal{Id}$ on complex functions and discrete two-forms. The next lemma shows that discrete holomorphic differentials are well-defined.

\begin{lemma}\label{lem:Hodge_projection}
Let $Q \in V(\Diamond)$ and $F_Q$ be the face of $X$ corresponding to $Q$. A discrete differential $\omega$ of type $\Diamond$ defined on the oriented edges of $F_Q$ is of the form $\omega=p dz_Q$ (or $\omega=q d\bar{z}_Q$) in any chart $z_Q$ of $Q\in V(\Diamond)$ if and only if $\star\omega=-i\omega$ (or $\star\omega=i\omega$).
\end{lemma}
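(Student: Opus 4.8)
The plan is to verify the claimed equivalence directly using the explicit coordinate formulas for the discrete Hodge star from the definition in Section~\ref{sec:hodge}. The Hodge star on a discrete one-form of type $\Diamond$ is defined purely in terms of the coefficients $p,q$ in the local representation $\omega|_{\partial F_Q}=p\,dz_Q+q\,d\bar z_Q$, namely $\star\omega|_{\partial F_Q}=-ip\,dz_Q+iq\,d\bar z_Q$. So the entire statement is essentially a linear-algebra computation at the single face $F_Q$, and the task is to match the condition $\omega=p\,dz_Q$ (i.e.\ $q=0$) with the eigenvalue condition $\star\omega=-i\omega$.

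First I would observe that since $\omega$ is of type $\Diamond$ and defined on the four edges of $F_Q$, the representation $\omega|_{\partial F_Q}=p\,dz_Q+q\,d\bar z_Q$ exists with uniquely determined complex numbers $p,q$ (as recalled at the start of Section~\ref{sec:wedge}, one constructs $f$ on the vertices of $Q$ with $\omega|_{\partial F_Q}=df$ and sets $p=\partial_\Lambda f$, $q=\bar\partial_\Lambda f$). Then the forward direction is immediate: if $\omega=p\,dz_Q$, then $q=0$, so by definition $\star\omega|_{\partial F_Q}=-ip\,dz_Q=-i\omega$; the case $\omega=q\,d\bar z_Q$ gives $p=0$ and $\star\omega=iq\,d\bar z_Q=i\omega$ analogously. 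For the converse, if $\star\omega=-i\omega$, then comparing coefficients in the representation $-ip\,dz_Q+iq\,d\bar z_Q=-ip\,dz_Q-iq\,d\bar z_Q$ forces $iq=-iq$, hence $q=0$ and $\omega=p\,dz_Q$; the condition $\star\omega=i\omega$ forces $p=0$ analogously.

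The one genuine subtlety, and the step I expect to be the main obstacle, is the phrase \emph{in any chart}: I must confirm that $q=0$ (respectively $p=0$) is a chart-independent statement, so that the equivalence holds for all charts $z_Q$ of $Q$ simultaneously rather than just one. The clean way to handle this is to invoke Proposition~\ref{prop:hodge_Mercat}, which expresses $\int_e\star\omega$ and $\int_{e^*}\star\omega$ through $\int_e\omega$ and $\int_{e^*}\omega$ using only the geometric quantities $\varphi_Q$ and the ratio $|z_Q(e^*)|/|z_Q(e)|$, and which was noted there to show that $\star$ does not depend on the chosen chart. Since $\star$ itself is chart-independent and the condition $\star\omega=-i\omega$ is phrased purely in terms of $\omega$ and $\star\omega$ (no chart appears), the eigenvalue condition is manifestly chart-independent; consequently the coefficient $q$ vanishes in one chart if and only if $\star\omega=-i\omega$, and this in turn is equivalent to $q$ vanishing in every chart. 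This is exactly the content needed to justify the remark following the definition of discrete holomorphic differentials, that it suffices to check the representation $\omega=p\,dz_Q$ in a single chart.
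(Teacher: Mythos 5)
Your proposal is correct and follows essentially the same route as the paper: the paper's proof is exactly the coefficient comparison in the unique representation $\omega|_{\partial F_Q}=p\,dz_Q+q\,d\bar z_Q$, identifying $\star\omega=-i\omega$ with $q=0$ and $\star\omega=i\omega$ with $p=0$. Your explicit appeal to Proposition~\ref{prop:hodge_Mercat} for chart-independence is a sound way of making precise what the paper leaves implicit in the remark preceding the lemma.
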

\begin{proof}
Let us take a (unique) representation $\omega=p dz_Q+ q d\bar{z}_Q$ in a coordinate chart $z_Q$ of $Q\in V(\Diamond)$. By definition, $\star\omega=-i\omega$ is equivalent to $q=0$. Analogously, $\star\omega=i\omega$ is equivalent to $p=0$.
\end{proof}

\begin{definition}
If $\omega$ and $\omega'$ are both discrete differentials of type $\Diamond$ defined on oriented edges of $X$, then we define their \textit{discrete scalar product} \[\langle \omega, \omega' \rangle:=\iint_{F(X)} \omega \wedge \star\bar{\omega}',\] whenever the right hand side converges absolutely. In a similar way, the discrete scalar product between two discrete two-forms or two complex functions on $V(\Lambda)$ is defined.
\end{definition}

A calculation in local coordinates shows that $\langle \cdot,\cdot \rangle$ is indeed a Hermitian scalar product.

\begin{definition}
$L_2(\Sigma,\Lambda,z)$ is the Hilbert space of \textit{square integrable} discrete differentials with respect to $\langle \cdot,\cdot \rangle$.
\end{definition}

\begin{proposition}\label{prop:adjoint2}
$\delta:=-\star d \star$ is the \textit{formal adjoint} of the discrete exterior derivative $d$:

Let $f:V(\Lambda)\to\mC$, let $\omega$ be a discrete one-form of type $\Diamond$, and let $\Omega:F(X)\to\mC$ be a discrete two-form of type $\Lambda$. If all of them are compactly supported, then \[\langle df, \omega \rangle =\langle f, \delta \omega \rangle \textnormal{ and }\langle d\omega,\Omega\rangle= \langle \omega, \delta \Omega\rangle.\]
\end{proposition}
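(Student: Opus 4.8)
The plan is to mimic the classical integration-by-parts argument, with the discrete Leibniz rule (Theorem~\ref{th:derivation}) replacing the smooth product rule and discrete Stokes' theorem (Theorem~\ref{th:stokes}) replacing the fundamental theorem of calculus. Throughout I will use that both scalar products can be written uniformly as $\langle\alpha,\beta\rangle=\iint_{F(X)}\alpha\wedge\star\bar\beta$, where for a $0$-form (resp.\ $2$-form) $\alpha$ the symbol $\wedge$ denotes the product of $\alpha$ with the $2$-form $\star\bar\beta$ (resp.\ the product of the function $\star\bar\beta$ with $\alpha$) introduced in Section~\ref{sec:differential_forms}. I will also record two elementary facts, each verified in a single chart $z_Q$: the discrete Hodge star commutes with complex conjugation (so $\overline{\star\gamma}=\star\bar\gamma$), and the wedge product of two discrete one-forms of type $\Diamond$ is antisymmetric (immediate from $pq'-qp'=-(p'q-q'p)$ in the definition of $\wedge$). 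Combined with $\star^2=-\textnormal{Id}$ on type-$\Diamond$ one-forms and $\star^2=\textnormal{Id}$ on functions and two-forms, these let me rewrite the dual operator cleanly.

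For the first identity I start from $\langle df,\omega\rangle=\iint_{F(X)}df\wedge\star\bar\omega$. Since $\star\bar\omega$ is again a discrete one-form of type $\Diamond$, Theorem~\ref{th:derivation} gives $d(f\,\star\bar\omega)=df\wedge\star\bar\omega+f\,d(\star\bar\omega)$, so that $df\wedge\star\bar\omega=d(f\,\star\bar\omega)-f\,d(\star\bar\omega)$. Summing over $F(X)$ and invoking discrete Stokes' theorem, the total of the exact terms equals $\oint_{\partial F}f\,\star\bar\omega$ for any finite collection $F$ of faces; choosing $F$ large enough to contain the (compact) support of the integrand makes $\partial F$ run only through edges where $f\,\star\bar\omega$ vanishes, so this boundary contribution is zero. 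Hence $\langle df,\omega\rangle=-\iint_{F(X)}f\,d(\star\bar\omega)$. On the other side, $\star\overline{\delta\omega}=\star\,\overline{(-\star d\star\omega)}=-\star\star\,d(\star\bar\omega)=-d(\star\bar\omega)$, using that conjugation and $\star$ commute, that $d$ is real, and that $\star^2=\textnormal{Id}$ on two-forms; therefore $\langle f,\delta\omega\rangle=\iint_{F(X)}f\wedge\star\overline{\delta\omega}=-\iint_{F(X)}f\,d(\star\bar\omega)$, which is exactly $\langle df,\omega\rangle$.

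The second identity runs completely in parallel. Writing $u:=\star\bar\Omega$ for the function obtained from the type-$\Lambda$ two-form $\Omega$, the same Hodge algebra gives $\star\overline{\delta\Omega}=-\star\star\,du=du$, now using $\star^2=-\textnormal{Id}$ on one-forms, so $\langle\omega,\delta\Omega\rangle=\iint_{F(X)}\omega\wedge du$. Applying Theorem~\ref{th:derivation} to the function $u$ and the one-form $\omega$, together with antisymmetry of $\wedge$, yields $\omega\wedge du=u\,d\omega-d(u\,\omega)$; summing over $F(X)$ and discarding the exact term by Stokes' theorem as before leaves $\langle\omega,\delta\Omega\rangle=\iint_{F(X)}u\,d\omega$. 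Since $\langle d\omega,\Omega\rangle=\iint_{F(X)}d\omega\wedge\star\bar\Omega=\iint_{F(X)}u\,d\omega$ by the uniform description of the two-form scalar product, the two sides agree.

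The genuinely substantive inputs---the Leibniz rule and Stokes' theorem---are already available, so the only real care is bookkeeping: checking that each object ($\star\bar\omega$, $u\,\omega$, and so on) has the type required for Theorem~\ref{th:derivation} and the scalar-product definitions to apply, and justifying that the exact terms integrate to zero. I expect the latter---the vanishing of the boundary term---to be the main point to get right, since the sum is over the infinite face set $F(X)$ when $\Sigma$ is noncompact; the compact-support hypothesis is precisely what lets me replace $F(X)$ by a finite exhausting collection $F$ and push $\partial F$ out to where all the forms vanish.
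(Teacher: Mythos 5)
Your proposal is correct, and it is essentially the proof the paper intends: Proposition~\ref{prop:adjoint2} is stated without proof precisely because, as Section~\ref{sec:differentials} explains, the argument translates literally from \cite{BoG15}, where formal adjointness is obtained by exactly this integration by parts --- the derivation property of Theorem~\ref{th:derivation} applied to $f\,\star\bar{\omega}$ (respectively $u\,\omega$ with $u=\star\bar{\Omega}$), discrete Stokes' Theorem~\ref{th:stokes} together with compact support to kill the boundary term, and the chartwise identities $\overline{\star\gamma}=\star\bar{\gamma}$, antisymmetry of $\wedge$, and $\star^2=\pm\textnormal{Id}$. Your bookkeeping is also sound where it matters: $\star\bar{\omega}$ is again of type $\Diamond$, so $d(\star\bar{\omega})$ is of type $\Lambda$ and the products with functions on $V(\Lambda)$ are the ones defined in Section~\ref{sec:differential_forms} (the commutation of $\star$ with conjugation on functions and two-forms uses that $\Omega_\Sigma$ is real, which the paper fixes).
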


\begin{definition}
The \textit{discrete Laplacian} on functions $f:V(\Lambda)\to\mC$, discrete one-forms of type $\Diamond$, or discrete two-forms on $F(X)$ of type $\Lambda$ is defined as the linear operator \[\triangle:=-\delta d-d\delta=\star d \star d +d \star d \star.\]
$f:V(\Lambda)\to\mC$ is said to be \textit{discrete harmonic} at $v\in V(\Lambda)$ if $\triangle f(v)=0$. 
\end{definition}

\begin{remark}
Note that straight from the definition and Corollary~\ref{cor:commutativity}, it follows for $f:V(\Lambda)\to\mC$ that $\triangle f (v)$ is proportional to $4\partial_\Diamond\bar{\partial}_\Lambda f(v)=4\bar{\partial}_\Diamond\partial_\Lambda f(v)$ in the chart $z_v$ around $v \in V(\Lambda)$. In particular, discrete harmonicity of functions does not depend on the choice of $\Omega_\Sigma$, and discrete holomorphic functions are discrete harmonic.
\end{remark}

\begin{lemma}\label{lem:Dirichlet_boundary2}
Let $(\Sigma,\Lambda,z)$ be a compact discrete Riemann surface. Then, the discrete Dirichlet energy functional $E_\Diamond$ defined by $E_{\Diamond}(f):=\langle df,df \rangle$ for functions $f:V(\Lambda) \to \mR$ is a convex nonnegative quadratic functional in the vector space of real functions on $V(\Lambda)$. Furthermore, \[-\frac{\partial E_{\Diamond}}{\partial f(v)}(f)=2\triangle f(v)\iint_{F_v} \Omega_{\Sigma} \] for any $v \in V(\Lambda)$. In particular, extremal points of this functional are functions that are discrete harmonic everywhere.
\end{lemma}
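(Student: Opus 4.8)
The plan is to recognize $E_{\Diamond}(f)=\langle df,df\rangle$ as a Hermitian square and to extract both the convexity statement and the gradient formula from the formal adjointness of $d$ and $\delta$ (Proposition~\ref{prop:adjoint2}), together with the fact that $\delta$ annihilates functions so that $\triangle=-\delta d$ on $V(\Lambda)$. First I would settle the qualitative claims. Since the discrete exterior derivative is linear, $f\mapsto df$ is a linear map, so $E_{\Diamond}(f)=\langle df,df\rangle$ is a quadratic form in the real variables $f(v)$, $v\in V(\Lambda)$. Because $\langle\cdot,\cdot\rangle$ is a Hermitian scalar product, $\langle df,df\rangle$ is real and nonnegative; hence, restricted to real functions, $E_{\Diamond}$ is a real nonnegative quadratic form, its associated symmetric matrix is positive semidefinite, and therefore $E_{\Diamond}$ is convex. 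This disposes of the first assertion.

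For the gradient I would differentiate the quadratic form along the basis of indicator functions $\chi_v$ (equal to $1$ at $v$ and $0$ elsewhere). Polarizing the Hermitian form on real arguments gives $\partial E_{\Diamond}/\partial f(v)=2\,\re\langle df,d\chi_v\rangle$. Now I move the outer $d$ across using Proposition~\ref{prop:adjoint2}, which applies since on a compact surface everything is compactly supported: $\langle d\chi_v,df\rangle=\langle \chi_v,\delta df\rangle$, and so $\langle df,d\chi_v\rangle=\overline{\langle \chi_v,\delta df\rangle}$. The key structural input is that on a function $f\colon V(\Lambda)\to\mC$ one has $\delta f=-\star d\star f$ with $\star f=f\Omega_{\Sigma}$ a discrete two-form, and $d$ vanishes on two-forms because $X$ carries no cells above dimension two; hence $\delta f=0$ and $d\delta f=0$, so that $\triangle=-\delta d$ and in particular $\delta df=-\triangle f$.

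It then remains to evaluate the inner product of $0$-forms. By the definition of the scalar product of functions, $\langle \chi_v,-\triangle f\rangle$ unwinds to the $\Omega_{\Sigma}$-weighted sum over vertices and collapses to the single term $-\overline{\triangle f(v)}\iint_{F_v}\Omega_{\Sigma}$. Taking the conjugate and using that $\Omega_{\Sigma}$ is real and positive yields $\partial E_{\Diamond}/\partial f(v)=-2\,\triangle f(v)\iint_{F_v}\Omega_{\Sigma}$, which is exactly the claimed identity. Here the realness of $\triangle f(v)$ for real $f$ is consistent with the left-hand side being real and also follows from the proportionality $\triangle f(v)\propto 4\partial_{\Diamond}\bar\partial_{\Lambda}f(v)$ together with Corollary~\ref{cor:commutativity}. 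Finally, since $\iint_{F_v}\Omega_{\Sigma}\neq 0$ for every $v$, the full gradient vanishes precisely when $\triangle f(v)=0$ for all $v$, i.e.\ when $f$ is discrete harmonic everywhere; by the convexity established above, these critical points are exactly the minimizers.

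I do not expect a single deep obstacle; the difficulty is bookkeeping. The three points requiring care are: verifying that $\delta$ kills $0$-forms (so that $\triangle=-\delta d$ rather than the full $-\delta d-d\delta$), correctly reading the Hilbert-space inner product of functions as the $\iint_{F_v}\Omega_{\Sigma}$-weighted sum over $V(\Lambda)$, and tracking the Hermitian conjugations that arise when polarizing and differentiating the real quadratic form. Each of these is routine once the conventions of Section~\ref{sec:hodge} are applied carefully.
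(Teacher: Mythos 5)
Your proof is correct and follows essentially the route the paper intends: the paper omits the proof of Lemma~\ref{lem:Dirichlet_boundary2}, deferring to \cite{BoG15}, and the expected argument is exactly your polarization of $\langle df,df\rangle$ along the indicator functions combined with the formal adjointness $\langle d\chi_v,df\rangle=\langle\chi_v,\delta df\rangle$ from Proposition~\ref{prop:adjoint2} and the observation that $\delta$ annihilates functions, so $\triangle=-\delta d$ and $\delta df=-\triangle f$. Your extra care in verifying that $\triangle f(v)$ is real for real $f$ (via the proportionality $\triangle f(v)\propto\partial_\Diamond\bar{\partial}_\Lambda f(v)$ and Corollary~\ref{cor:commutativity}) and that $\iint_{F_v}\Omega_\Sigma\neq 0$ is precisely the bookkeeping the translation to the compact setting requires.
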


As a conclusion of this section, we state and prove \textit{discrete Liouville's theorem}.

\begin{theorem}\label{th:Liouville}
Let $(\Sigma,\Lambda,z)$ be a compact discrete Riemann surface. Then, any discrete harmonic function $f:V(\Lambda)\to\mC$ is biconstant. In particular, any complex valued discrete holomorphic function is biconstant.
\end{theorem}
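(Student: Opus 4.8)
The plan is to exploit the discrete Dirichlet energy and its convexity established in Lemma~\ref{lem:Dirichlet_boundary2}. Since $\Sigma$ is compact, $\Lambda$ is finite, so every function is automatically compactly supported and the scalar products below converge. First I would reduce to the real-valued case: the discrete Laplacian on functions is a real operator (its coefficients are the real cotangent-type weights visible in Proposition~\ref{prop:hodge_Mercat} together with the real area factors entering $\Omega_\Sigma$), so $\triangle f = \triangle(\re f) + i\,\triangle(\im f)$ and it suffices to show that every real discrete harmonic $f:V(\Lambda)\to\mR$ is biconstant.

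So fix a real harmonic $f$. By the gradient formula in Lemma~\ref{lem:Dirichlet_boundary2}, $-\partial E_\Diamond/\partial f(v) = 2\triangle f(v)\iint_{F_v}\Omega_\Sigma$, and since $\Omega_\Sigma$ is a fixed positive real two-form we have $\iint_{F_v}\Omega_\Sigma\neq 0$; thus harmonicity $\triangle f(v)=0$ for all $v$ makes $f$ a critical point of $E_\Diamond$. As $E_\Diamond$ is a nonnegative quadratic form, Euler's identity for homogeneous degree-two functions gives $\sum_{v}f(v)\,\partial E_\Diamond/\partial f(v) = 2E_\Diamond(f)$, so at a critical point $E_\Diamond(f)=\langle df,df\rangle=0$. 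Because $\langle\cdot,\cdot\rangle$ is a positive definite Hermitian scalar product, this forces $df\equiv 0$ on all oriented edges of $X$.

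It remains to translate $df\equiv 0$ into biconstancy. By discrete Stokes' Theorem~\ref{th:stokes}, $\int_e df = (f(v'_+)-f(v'_-))/2$ whenever $e$ is the edge of $X$ parallel to the diagonal $v'_-v'_+$ of a quadrilateral; hence $df\equiv 0$ means $f(v'_-)=f(v'_+)$ across every such diagonal, i.e. $f$ takes equal values at the two endpoints of every edge of $\Gamma$ and of every edge of $\Gamma^*$. Since $\Sigma$ is connected, the graphs $\Gamma$ and $\Gamma^*$ are each connected, so $f$ is constant on $V(\Gamma)$ and constant on $V(\Gamma^*)$; that is, $f$ is biconstant. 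Applying this to $\re f$ and $\im f$ yields the claim for complex $f$. Finally, by the remark following the definition of the discrete Laplacian, discrete holomorphic functions are discrete harmonic, so the last assertion follows at once.

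For robustness I would record an alternative to the energy argument that bypasses Lemma~\ref{lem:Dirichlet_boundary2}: on $0$-forms $\delta f=0$, so $\triangle f=-\delta\,df$, and the adjointness of Proposition~\ref{prop:adjoint2} yields $\langle df,df\rangle=\langle f,\delta\,df\rangle=-\langle f,\triangle f\rangle$, which vanishes when $f$ is harmonic; positive definiteness again gives $df\equiv0$. The main obstacle I anticipate is not a single deep step but the two justifications that make the argument airtight: that $\triangle$ preserves real-valuedness (so the reduction to real $f$ is legitimate) and that the Hermitian form $\langle\cdot,\cdot\rangle$ is genuinely positive definite on one-forms (so $\langle df,df\rangle=0$ implies $df\equiv0$ rather than mere isotropy).
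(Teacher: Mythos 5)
Your proposal is correct, but your primary route is genuinely different from the paper's. The paper proves Theorem~\ref{th:Liouville} in two lines by exactly what you record as your fallback: adjointness from Proposition~\ref{prop:adjoint2} gives $\langle df,df\rangle=\langle f,\delta df\rangle$, which vanishes for harmonic $f$, and positive definiteness of the scalar product forces $df=0$, i.e., biconstancy. (Your sign is in fact the more careful one: since $\delta f=0$ on functions, $\triangle f=-\delta df$, so $\langle df,df\rangle=-\langle f,\triangle f\rangle$; the paper's displayed chain $\langle f,\delta df\rangle=\langle f,\triangle f\rangle$ elides this sign, harmlessly since both sides are zero anyway.) Your main argument instead runs through Lemma~\ref{lem:Dirichlet_boundary2}: reduce to real-valued $f$ using that $\triangle$ has real coefficients (legitimate, as the weights in Proposition~\ref{prop:hodge_Mercat} and the positive two-form $\Omega_\Sigma$ are real), observe that harmonic functions are critical points of $E_\Diamond$, and apply Euler's identity for the homogeneous quadratic functional to conclude $E_\Diamond(f)=0$ at any critical point. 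That is a valid alternative derivation --- in effect the same variational mechanism the paper deploys later for Theorem~\ref{th:harmonic_existence} --- but it costs you the reduction to real $f$ and the restriction of Lemma~\ref{lem:Dirichlet_boundary2} to real functions, whereas the adjointness computation handles complex $f$ directly. Both routes bottom out at the same two facts you correctly flag as the crux: positive definiteness of $\langle\cdot,\cdot\rangle$, which the paper only asserts via ``a calculation in local coordinates,'' and the passage from $df\equiv 0$ to biconstancy, which you, unlike the paper's terse ``i.e.,'' justify explicitly through Theorem~\ref{th:stokes} and the connectedness of $\Gamma$ and $\Gamma^*$.
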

\begin{proof}
Since $\delta$ is the formal adjoint of $d$ by Proposition~\ref{prop:adjoint2},
\[\langle df,df \rangle=\langle f, \delta d f \rangle=\langle f, \triangle f \rangle=0.\] Now, $\langle df,df \rangle \geq 0$ and equality holds only if $df=0$, i.e., if $f$ is biconstant.
\end{proof}


\section{Periods of discrete differentials}\label{sec:periods}

In this section, we define the (discrete) periods of a closed discrete differential of type $\Diamond$ on a compact discrete Riemann surface $(\Sigma, \Lambda, z)$ of genus $g$ in Section~\ref{sec:cover} and state and prove a discrete Riemann bilinear identity in Section~\ref{sec:RBI}. Although we aim at being as close as possible to the smooth case in our presentation, the bipartite structure of $\Lambda$ prevents us from doing so. We struggle with the same problem of white and black periods as Mercat did for discrete Riemann surfaces whose discrete complex structure is described by real numbers $\rho_Q$ in \cite{Me07}. The reason for this is that a discrete differential of type $\Diamond$ corresponds to a pair of discrete differentials on each of $\Gamma$ and $\Gamma^*$.

Mercat constructed out of a canonical homology basis on $\Lambda$ certain canonical homology bases on $\Gamma$ and $\Gamma^*$. By solving a discrete Neumann problem, he then proved the existence of dual cohomology bases on $\Gamma$ and $\Gamma^*$. The discrete Riemann bilinear identity for the elements of the bases (and by linearity for general closed discrete differentials) was a direct consequence of the construction.

On the contrary, the proof given in \cite{BoSk12} followed the ideas of the smooth case, but the relation to discrete wedge products was not that immediate. We will give a full proof of the general discrete Riemann bilinear identity that follows the lines of the proof of the classical Riemann bilinear identity, using almost the same notation. The main difference to \cite{BoSk12} is that we use a different refinement of the cellular decomposition to profit of a cellular decomposition of the canonical polygon with $4g$ vertices. The appearance of black and white periods indicates the analogy to Mercat's approach in \cite{Me07}.


\subsection{Universal cover and periods}\label{sec:cover}

Let $p: \tilde{\Sigma} \to \Sigma$ denote the universal covering of the compact surface $\Sigma$. $p$ gives rise to a bipartite quad-decomposition $\tilde{\Lambda}$ with medial graph $\tilde{X}$ and a covering $p: \tilde{\Lambda} \to \Lambda$. Now, $(\tilde{\Sigma}, \tilde{\Lambda}, z \circ p)$ is a discrete Riemann surface as well and $p: V(\tilde{\Lambda})\to V(\Lambda)$ is a discrete holomorphic mapping.

We fix a base vertex $\tilde{v}_0 \in V(\tilde{\Lambda})$. Let $\alpha_1, \ldots, \alpha_g, \beta_1, \ldots, \beta_g$ be smooth loops on $\Sigma$ with base point $v_0:=p(\tilde{v}_0)$ such that these loops cut out a fundamental $4g$-gon $F_g$. It is well known that such loops exist; the order of loops at the boundary of $F_g$ is $\alpha_k, \beta_k, \alpha_k^{-1}, \beta_k^{-1}$, $k$ going in order from $1$ to $g$. Their homology classes $a_1, \ldots, a_g, b_1, \ldots, b_g$ form a canonical homology basis of $H_1(\Sigma,\mZ)$.

Clearly, there are homotopies between $\alpha_1, \ldots, \alpha_g, \beta_1, \ldots, \beta_g$ and closed paths $\alpha'_1, \ldots, \alpha'_g, \beta'_1, \ldots, \beta'_g$ on $X$, all of the latter having the same fixed base point $x_0\in V(X)$.

\begin{definition}
Let $P$ be an oriented cycle on $X$. $P$ induces closed paths on $\Gamma$ and $\Gamma^*$ that we denote by $B(P)$ and $W(P)$ in the following way: For an oriented edge $[Q,v]$ of $P$, we add the black (or white) vertex $v$ to $B(P)$ (or $W(P)$) and the corresponding white (or black) diagonal of $Q \in F(\Lambda)$ to $W(P)$ (or $B(P)$), see Figure~\ref{fig:contours2}. The orientation of the diagonal is induced by the orientation of $[Q,v]$. Clearly, $B(P)$ and $W(P)$ are cycles on $\Gamma$ and $\Gamma^*$ that are homotopic to $P$. We denote the one-chains on $X$ consisting of all the black or white edges corresponding to $B(P)$ and $W(P)$ by $BP$ and $WP$, respectively.
\end{definition}

\begin{figure}[htbp]
\begin{center}
\beginpgfgraphicnamed{medial}
\begin{tikzpicture}
[white/.style={circle,draw=black,fill=black,thin,inner sep=0pt,minimum size=1.2mm},
black/.style={circle,draw=black,fill=white,thin,inner sep=0pt,minimum size=1.2mm},
gray/.style={circle,draw=black,fill=gray,thin,inner sep=0pt,minimum size=1.2mm},scale=1.0]
\node[white] (w1)
at (-2,-2) {};
\node[white] (w2)
 at (0,-2) {};
\node[white] (w3)
 at (2,-2) {};
\node[white] (w4)
 at (-1,-1) {};
\node[white] (w5)
 at (1,-1) {};
\node[white] (w6)
 at (-2,0) {};
\node[white] (w7)
 at (0,0) {};
\node[white] (w8)
 at (2,0) {};
\node[white] (w9)
 at (-1,1) {};
\node[white] (w10)
 at (1,1) {};
\node[white] (w11)
 at (-2,2) {};
\node[white] (w12)
 at (0,2) {};
\node[white] (w13)
 at (2,2) {};

\node[black] (b1)
 at (-1,-2) {};
\node[black] (b2)
 at (1,-2) {};
\node[black] (b3)
 at (-2,-1) {};
\node[black] (b4)
 at (0,-1) {};
\node[black] (b5)
 at (2,-1) {};
\node[black] (b6)
 at (-1,0) {};
\node[black] (b7)
 at (1,0) {};
\node[black] (b8)
 at (-2,1) {};
\node[black] (b9)
 at (0,1) {};
\node[black] (b10)
 at (2,1) {};
\node[black] (b11)
 at (-1,2) {};
\node[black] (b12)
 at (1,2) {};

\node[gray] (m1)
 at (0,-1.5) {};
\node[gray] (m2)
 at (0.5,-1) {};
\node[gray] (m3)
 at (1,-0.5) {};
\node[gray] (m4)
 at (1.5,0) {};
\node[gray] (m5)
 at (1,0.5) {};
\node[gray] (m6)
 at (0.5,1) {};
\node[gray] (m7)
 at (0,1.5) {};
\node[gray] (m8)
 at (-0.5,1) {};
\node[gray] (m9)
 at (-1,0.5) {};
\node[gray] (m10)
 at (-1.5,0) {};
\node[gray] (m11)
 at (-1,-0.5) {};
\node[gray] (m12)
 at (-0.5,-1) {};

\draw[dashed] (w1) -- (b1) -- (w2) -- (b2) -- (w3);
\draw[dashed] (b3) -- (w4) -- (b4) -- (w5) -- (b5);
\draw[dashed] (w6) -- (b6) -- (w7) -- (b7) -- (w8);
\draw[dashed] (b8) -- (w9) -- (b9) -- (w10) -- (b10);
\draw[dashed] (w11) -- (b11) -- (w12) -- (b12) -- (w13);

\draw[dashed] (w1) -- (b3) -- (w6) -- (b8) -- (w11);
\draw[dashed] (b1) -- (w4) -- (b6) -- (w9) -- (b11);
\draw[dashed] (w2) -- (b4) -- (w7) -- (b9) -- (w12);
\draw[dashed] (b2) -- (w5) -- (b7) -- (w10) -- (b12);
\draw[dashed] (w3) -- (b5) -- (w8) -- (b10) -- (w13);

\draw (w2) -- (w5) -- (w8) -- (w10) -- (w12) -- (w9) -- (w6) -- (w4) -- (w2);
\draw (b4) -- (b7) -- (b9) -- (b6) -- (b4);

\draw[color=gray] (m1) -- (m2) -- (m3) -- (m4) -- (m5) -- (m6) -- (m7) -- (m8) -- (m9) -- (m10) -- (m11) -- (m12) -- (m1);

\coordinate[label=center:$W(P)$] (z1)  at (-0.2,-0.3) {};
\coordinate[label=center:$P$] (z2)  at (0.5,-1.25) {};
\coordinate[label=center:$B(P)$] (z3)  at (-0.85,-1.7) {};
\end{tikzpicture}
\endpgfgraphicnamed
\caption{Cycles $P$ on $X$, $B(P)$ on $\Gamma$, and $W(P)$ on $\Gamma^*$}
\label{fig:contours2}
\end{center}
\end{figure}

\begin{definition}
Let $\omega$ be a closed discrete differential of type $\Diamond$. For $1 \leq k \leq g$, we define its $a_k$\textit{-periods} $A_k:=\oint_{\alpha'_k} \omega$ and $b_k$\textit{-periods} $B_k:=\oint_{\beta'_k} \omega$ and its
\begin{align*}
 \textit{black } a_k\textit{-periods } A^B_k&:=2\int_{B\alpha'_k} \omega \;  \textnormal{ and } \; \textit{black } b_k\textit{-periods } B^B_k:=2\int_{B\beta'_k} \omega\\
 \textnormal{and its }\textit{white } a_k\textit{-periods } A^W_k&:=2\int_{W\alpha'_k} \omega \textnormal{ and } \textit{white } b_k\textit{-periods } B^W_k:=2\int_{W\beta'_k} \omega.
\end{align*}
\end{definition}

\begin{remark}
The reason for the factor of two is that to compute the black or white periods, we actually integrate $\omega$ on $\Gamma$ or $\Gamma^*$ and not on the medial graph $X$. Clearly, $2A_k=A_k^B+A_k^W$ and $2B_k=B_k^B+B_k^W$.
\end{remark}

\begin{lemma}
The periods of the closed discrete differential $\omega$ of type $\Diamond$ depend only on the homology classes $a_k$ and $b_k$, i.e., if $\alpha''_k, \beta''_k$, $1 \leq k \leq g$, are loops on $X$ that are in the homology classes $a_k$ and $b_k$, respectively, then
\begin{align*}
 \int_{a_k} \omega &:= A_k=\oint_{\alpha''_k} \omega, \int_{Ba_k} \omega :=A_k^B=2\int_{B\alpha''_k} \omega, \int_{Wa_k} \omega :=A_k^W=2\int_{W\alpha''_k} \omega;\\
  \int_{b_k} \omega &:= B_k=\oint_{\beta''_k} \omega, \int_{Bb_k} \omega :=B_k^B=2\int_{B\beta''_k} \omega, \int_{Wb_k} \omega :=B_k^W=2\int_{W\beta''_k} \omega.
\end{align*}
\end{lemma}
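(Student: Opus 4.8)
The plan is to reduce every one of the six identities to discrete Stokes' theorem (Theorem~\ref{th:stokes}) together with the hypothesis $d\omega\equiv 0$. The guiding principle is exactly the one from the smooth theory: a closed differential integrates to the same value over two homologous cycles because their difference bounds. The only work is to realise this principle on each of the three relevant cellular decompositions of $\Sigma$, namely the medial graph $X$ (for $A_k,B_k$), the graph $\Gamma$ (for the black periods), and the graph $\Gamma^*$ (for the white periods).

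For the ordinary periods I would argue directly on $X$. Since $X$ is the $1$-skeleton of a cellular decomposition of $\Sigma$ whose $2$-cells are the faces $F_v$ and $F_Q$, two loops $\alpha'_k,\alpha''_k$ on $X$ representing the same class $a_k\in H_1(\Sigma,\mZ)$ differ by a cellular $2$-boundary: $\alpha'_k-\alpha''_k=\partial F$ for a finite collection $F$ of faces of $X$ taken with multiplicities. Discrete Stokes' theorem then gives
\[
\oint_{\alpha'_k}\omega-\oint_{\alpha''_k}\omega=\oint_{\partial F}\omega=\iint_F d\omega=0,
\]
and likewise for $\beta'_k,\beta''_k$. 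This settles the invariance of $A_k$ and $B_k$.

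For the black periods I would pass from $X$ to $\Gamma$. By construction $B(P)$ is a cycle on $\Gamma$ homotopic to $P$, so $\alpha'_k\simeq\alpha''_k$ forces $B(\alpha'_k)$ and $B(\alpha''_k)$ to be homologous cycles on $\Gamma$. The key observation is that $\Gamma$, together with the faces dual to the white vertices, is itself a cellular decomposition of $\Sigma$: the $2$-cell attached to a white vertex $w$ is bounded precisely by the black diagonals of the quadrilaterals $Q\sim w$, and on the medial graph this boundary is realised by the cycle $\partial F_w$, which consists only of black edges $[Q,w]$. Hence a homology $B(\alpha'_k)-B(\alpha''_k)=\sum_w n_w\,\partial F_w$ on $\Gamma$ translates, at the level of black edges, into
\[
\int_{B\alpha'_k}\omega-\int_{B\alpha''_k}\omega=\sum_w n_w\oint_{\partial F_w}\omega=\sum_w n_w\iint_{F_w}d\omega=0,
\]
the normalising factor of $2$ in $A_k^B,B_k^B$ being irrelevant. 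The white periods are handled symmetrically: one replaces $\Gamma$ by $\Gamma^*$, the white vertices by black vertices, and the faces $F_w$ by the faces $F_v$ around black vertices $v$, whose boundaries $\partial F_v$ consist of white edges, so that $\oint_{\partial F_v}\omega=\iint_{F_v}d\omega=0$. (As a consistency check, the relations $2A_k=A_k^B+A_k^W$ and $2B_k=B_k^B+B_k^W$ from the preceding remark let one recover the invariance of $A_k,B_k$ from that of the black and white periods.)

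The main obstacle is entirely bookkeeping rather than analytic. I expect the delicate part to be setting up the dictionary between cellular homology on the three complexes $X$, $\Gamma$, $\Gamma^*$ and the medial faces: one must verify that the boundary of a $\Gamma$-face (respectively a $\Gamma^*$-face) is genuinely realised by $\partial F_w$ (respectively $\partial F_v$) on the medial graph, with matching orientations. Once this identification is made precise, every estimate disappears and each step is a single application of discrete Stokes' theorem to the closed differential $\omega$.
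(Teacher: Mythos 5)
Your proof is correct and takes essentially the same route as the paper: discrete Stokes' Theorem~\ref{th:stokes} on $X$ for the ordinary periods, and, for the black and white periods, the observation that $\omega$, being of type $\Diamond$, induces differentials on $\Gamma$ and $\Gamma^*$ that are closed in the sense that their integrals around the cycles of diagonals surrounding white (respectively black) vertices vanish, whence homologous cycles on $\Gamma$ and $\Gamma^*$ give equal integrals. Your explicit cellular-homology bookkeeping, identifying the boundary of the $2$-cell at a white vertex $w$ (respectively black vertex $v$) with $\partial F_w$ (respectively $\partial F_v$) on the medial graph and invoking $\iint_{F_w} d\omega=0$, merely spells out what the paper's proof leaves implicit.
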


\begin{proof}
That $a$- and $b$-periods of a closed discrete one-form depend on the homology class only follows from discrete Stokes' Theorem~\ref{th:stokes}. For the other four cases, we use that $\omega$ induces discrete differentials on $\Gamma$ and $\Gamma^*$ in the obvious way since it is of type $\Diamond$. These differentials are closed in the sense that the integral along the black (or white) cycle around any white (or black) vertex of $\Lambda$ vanishes. Since the paths $B\alpha'_k$ and $B\alpha''_k$ on $\Gamma$ are both in the homology class $a_k$, $\int_{B\alpha'_k} \omega=\int_{B\alpha''_k} \omega$. The same reasoning applies for the other cases.
\end{proof}


\subsection{Discrete Riemann bilinear identity}\label{sec:RBI}

Again, let $\alpha_1, \ldots, \alpha_g, \beta_1, \ldots, \beta_g$ be smooth loops on the compact surface $\Sigma$ with base point $v_0 \in V(\Lambda)$ such that these loops cut out a fundamental $4g$-gon $F_g$. For the following two definitions, we follow \cite{BoSk12},but give a different proof for the discrete Riemann bilinear identity than \cite{BoSk12}.

\begin{definition}
For a loop $\alpha$ on $\Sigma$, let $d_{\alpha}$ denote the induced deck transformations on $\tilde{\Sigma}$, $\tilde{\Lambda}$, and $\tilde{X}$.
\end{definition}

\begin{definition}
$f:V(\tilde{\Lambda})\to \mC$ is \textit{multi-valued} with black periods $A_1^B, A_2^B, \ldots, A_g^B$, $B_1^B, B_2^B, \ldots, B_g^B \in \mC$ and white periods $A_1^W, A_2^W, \ldots, A_g^W$, $B_1^W, B_2^W, \ldots, B_g^W \in \mC$ if
\begin{align*}
f(d_{\alpha_k}b)=f(b)+A_k^B, \quad f(d_{\alpha_k}w)=f(w)+A_k^W,\quad f(d_{\beta_k}b)=f(b)+B_k^B,\quad f(d_{\beta_k}w)=f(w)+B_k^W
\end{align*}
for any $1\leq k \leq g$, each black vertex $b\in V(\tilde{\Gamma})$, and each white vertex $w \in V(\tilde{\Gamma}^*)$. 
\end{definition}

\begin{lemma}\label{lem:multivalued}
Let $f:V(\tilde{\Lambda})\to \mC$ be multi-valued. Then, $df$ defines a closed discrete one-form of type $\Diamond$ on the oriented edges of $X$ and $df$ has the same black and white periods as $f$. Conversely, if $\omega$ is a closed discrete differential of type $\Diamond$, then there is a multi-valued function $f:V(\tilde{\Lambda})\to \mC$ such that $df$ projects to $\omega$. If $\omega$ is discrete holomorphic, then $f$ is as well.
\end{lemma}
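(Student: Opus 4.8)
The plan is to prove the two implications separately, relying on discrete Stokes' Theorem~\ref{th:stokes} and $ddf=0$ (Proposition~\ref{prop:dd0}) for the forward direction, and on the existence of a primitive over a simply-connected closed region (Proposition~\ref{prop:primitive2}) for the converse.

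\textbf{Forward direction.} Given a multi-valued $f$ on $V(\tilde{\Lambda})$, I would first observe that $df$ is automatically of type $\Diamond$: by the first formula of Theorem~\ref{th:stokes}, $\int_{[Q,v]} df$ depends only on the two neighbors of $v$ of the opposite color inside $Q$, and with the orientations induced by $\partial F_Q$ the contributions $\int_{[Q,b_-]} df$ and $\int_{[Q,b_+]} df$ come out as $\tfrac12(f(w_+)-f(w_-))$ and its negative, giving $df([Q,b_-])=-df([Q,b_+])$ (and likewise for the white vertices). Closedness is immediate from $ddf=0$. To see that $df$ descends from $\tilde{X}$ to $X$, I use that multi-valuedness means $f\circ d_\alpha$ differs from $f$ by an additive constant on black and on white vertices separately; since constants are annihilated by $d$, we get $d_\alpha^*(df)=df$, so $df$ is deck-invariant and projects to $X$. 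Finally, to match periods I would compute the black $a_k$-period $2\int_{B\alpha'_k} df$: lifting $B\alpha_k$ to a black path on $\tilde{\Gamma}$ from $b$ to $d_{\alpha_k}b$ and applying Theorem~\ref{th:stokes} edge by edge, the factor $2$ cancels the factor $\tfrac12$ and the sum telescopes to $f(d_{\alpha_k}b)-f(b)=A_k^B$. The same computation on $\tilde{\Gamma}^*$ and for the $\beta$-loops yields the remaining three families of periods.

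\textbf{Converse direction.} Given a closed discrete differential $\omega$ of type $\Diamond$ on $X$, I would pull it back to $\tilde{\omega}:=p^*\omega$ on $\tilde{X}$, which remains closed and of type $\Diamond$. Since $\tilde{\Sigma}$ is simply connected, I can integrate $\tilde{\omega}$ to a primitive $f:V(\tilde{\Lambda})\to\mC$ with $df=\tilde{\omega}$: after fixing the value of $f$ at the base vertices on $\tilde{\Gamma}$ and $\tilde{\Gamma}^*$, set $f(\tilde{v})=\int_P \tilde{\omega}$ along any path $P$ in $\tilde{X}$ to $\tilde{v}$. This is well-defined because any two such paths differ by a cycle which, by simple connectivity, bounds a finite collection of faces, so by Theorem~\ref{th:stokes} the difference of integrals equals $\iint d\tilde{\omega}=0$; concretely this is Proposition~\ref{prop:primitive2} applied to an exhaustion of $\tilde{\Sigma}$ by simply-connected closed regions. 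By construction $df=\tilde{\omega}$ projects to $\omega$, and Proposition~\ref{prop:primitive2} also provides that $f$ is discrete holomorphic whenever $\omega$ is.

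\textbf{Multi-valuedness and the main obstacle.} Verifying that this $f$ is multi-valued is where the most care is needed, and I expect it to be the main obstacle. The crucial point is that $df$ relates values of $f$ only at vertices of the \emph{same} color, so the two independent additive constants (black on $\tilde{\Gamma}$, white on $\tilde{\Gamma}^*$) from Proposition~\ref{prop:primitive2} persist as two a priori distinct periods. Concretely, for a deck transformation $d_{\alpha_k}$ the deck-invariance of $\tilde{\omega}$ gives $f(d_{\alpha_k}\tilde{w})-f(d_{\alpha_k}\tilde{v})=f(\tilde{w})-f(\tilde{v})$ for any two same-colored vertices joined by a black (resp.\ white) path; since $\tilde{\Gamma}$ and $\tilde{\Gamma}^*$ are each connected, $f\circ d_{\alpha_k}-f$ is constant on black vertices and constant on white vertices, the two constants being the black and white $a_k$-periods of $\omega$, and analogously for $d_{\beta_k}$. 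Thus $f$ is multi-valued with exactly the black and white periods of $\omega$. The delicate bookkeeping is to keep the two colors separate throughout---precisely because $df$ does not couple them---and to confirm base-point independence of each period, which again follows from deck-invariance of $\tilde{\omega}$ together with closedness.
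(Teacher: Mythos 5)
Your proposal is correct and follows essentially the same route as the paper: the forward direction via discrete Stokes' Theorem~\ref{th:stokes} (giving deck-invariance so that $df$ descends to $X$) together with $ddf=0$ from Proposition~\ref{prop:dd0}, and the converse via the discrete primitive of the lift $\tilde{\omega}$ on the simply-connected universal cover from Proposition~\ref{prop:primitive2}. The extra details you supply---the explicit type-$\Diamond$ check, the telescoping period computation, and the verification that the primitive is multi-valued with the correct black and white periods---are points the paper's proof treats as immediate (``by definition of these periods''), and your arguments for them are sound.
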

\begin{proof}
Let $\tilde{e}$ be an oriented edge of $\tilde{X}$. Discrete Stokes' Theorem~\ref{th:stokes} implies that $df(d_\alpha(\tilde{e}))=df(\tilde{e})$ for any loop $\alpha$ on $\Sigma$. In particular, $df$ is well-defined on the oriented edges of $X$. Closeness follows from $ddf=0$ by Proposition~\ref{prop:dd0}. Clearly, black and white periods of $f$ and $df$ are the same by definition of these periods.

Let $\tilde{\omega}$ be the lift of $\omega$ to $\tilde{X}$. Since the universal cover is simply-connected, it follows from Proposition~\ref{prop:primitive2} that there exists a discrete primitive $f:=\int \tilde{\omega} :V(\tilde{\Lambda})\to\mC$ that is discrete holomorphic if $f$ is.
\end{proof}
\begin{remark}
As a consequence, white and black periods of a closed discrete one-form of type $\Diamond$ are not determined by its periods.
\end{remark}

We are now ready to prove the following \textit{discrete Riemann bilinear identity}. 

\begin{theorem}\label{th:RBI}
Let $\omega$ and $\omega'$ be closed discrete differentials of type $\Diamond$. Let their black and white periods be given by $A_k^B, B_k^B, A_k^W, B_k^W$ and ${A'_k}^B, {B'_k}^B, {A'_k}^W, {B'_k}^W$, respectively, for $k=1,\ldots,g.$ Then, \[\iint_{F(X)} \omega \wedge \omega'=\frac{1}{2}\sum_{k=1}^g \left(A_k^B {B'_k}^W-B_k^B {A'_k}^W\right)+\frac{1}{2}\sum_{k=1}^g \left(A_k^W {B'_k}^B-B_k^W {A'_k}^B\right).\]
\end{theorem}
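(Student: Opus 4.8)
The plan is to mimic the classical proof of the Riemann bilinear identity as closely as possible, lifting everything to the universal cover and integrating over the fundamental $4g$-gon $F_g$. First I would use Lemma~\ref{lem:multivalued} to produce a multi-valued primitive $f:V(\tilde{\Lambda})\to\mC$ of the lift $\tilde{\omega}$ of $\omega$; by that lemma $f$ has the same black and white periods $A_k^B,B_k^B,A_k^W,B_k^W$ as $\omega$. On the fundamental domain the quantity $\iint_{F(X)}\omega\wedge\omega'$ equals $\iint_{F_g}\tilde{\omega}\wedge\tilde{\omega}'$, since $F_g$ is a fundamental domain for the deck group and both one-forms are of type $\Diamond$ (so the wedge product is a well-defined two-form of type $\Diamond$, invariant under deck transformations). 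Using Theorem~\ref{th:derivation}, the derivation property of the discrete exterior derivative, together with $d\omega'=0$, I would rewrite the integrand as a total derivative: $\tilde{\omega}\wedge\tilde{\omega}'=df\wedge\tilde{\omega}'=d(f\tilde{\omega}')$. Then discrete Stokes' Theorem~\ref{th:stokes} converts the double integral over $F_g$ into a boundary integral,
\[
\iint_{F_g}\tilde{\omega}\wedge\tilde{\omega}'=\oint_{\partial F_g} f\,\tilde{\omega}'.
\]

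Next I would analyze $\partial F_g$. The boundary of the $4g$-gon consists of the edge-paths $\alpha'_k,\beta'_k,{\alpha'_k}^{-1},{\beta'_k}^{-1}$ (approximated on $\tilde X$), and the standard classical bookkeeping pairs up each side $\alpha'_k$ with its partner $d_{\beta_k}\alpha'_k$ traversed in the opposite direction, and similarly each $\beta'_k$ with $d_{\alpha_k}\beta'_k$. On the paired sides the one-form $\tilde{\omega}'$ takes equal values (it is deck-invariant), while the multi-valued factor $f$ differs precisely by the periods recorded in the definition of multi-valuedness: along the $\beta_k$-side the values of $f$ jump by $A_k^B$ at black vertices and $A_k^W$ at white vertices, and along the $\alpha_k$-side by $B_k^B,B_k^W$. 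Grouping the boundary sum accordingly collapses it into a sum over $k$ of terms of the form (period of $\omega$) times (period of $\omega'$). The black/white splitting of the medial-graph edges is exactly what forces the two separate sums in the stated identity: an oriented edge $[Q,v]$ contributes its black diagonal to one of $B\alpha'_k,B\beta'_k$ and its white diagonal to the matching white path, so the jump in $f$ at a black vertex gets paired with the white period of $\omega'$ and vice versa.

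The main obstacle I expect is the careful accounting of the black and white contributions along $\partial F_g$, and in particular getting the factor $\tfrac12$ and the crossing of black-with-white periods exactly right. In the smooth case the pairing is clean; here the factor of two built into the definitions of $A_k^B,A_k^W,\dots$ (because black and white periods are computed by integrating over $\Gamma$ or $\Gamma^*$ rather than over $X$, as noted in the Remark after the period definition) must be tracked, and one must verify that summing over the two colors of boundary edges produces exactly $\tfrac12\sum_k(A_k^B{B'_k}^W-B_k^B{A'_k}^W)+\tfrac12\sum_k(A_k^W{B'_k}^B-B_k^W{A'_k}^B)$. Concretely, when $f$ jumps by a black period the matching factor $\int\tilde{\omega}'$ over the paired edges assembles into a white period of $\omega'$, which is why the two sums mix colors. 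A subsidiary technical point is choosing the refinement of the cellular decomposition so that $\partial F_g$ is genuinely a cycle on $\tilde X$ whose pieces are the $\alpha'_k,\beta'_k$ and their deck translates; the authors indicate they use a refinement profiting from a cellular decomposition of the canonical polygon with $4g$ vertices, and I would adopt the same device to make the boundary identification combinatorially exact.
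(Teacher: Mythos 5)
Your proposal follows essentially the same route as the paper's proof of Theorem~\ref{th:RBI}: a multi-valued primitive from Lemma~\ref{lem:multivalued}, the identity $\omega\wedge\omega'=d(f\omega')$ via Theorem~\ref{th:derivation} and closedness of $\omega'$, discrete Stokes' Theorem~\ref{th:stokes} reducing to a boundary integral over a fundamental $4g$-gon, and the paired-side bookkeeping in which a jump of $f$ by a black period meets a white period of $\omega'$ (and vice versa), with the factor $\tfrac12$ coming from the definition of black and white periods. The points you flag as delicate --- subdividing the faces $F_v$ and $F_Q$ so that the boundary is a genuine cycle on a refinement $X'$, and extending $f$ and $\omega'$ to the refinement while preserving multi-valuedness, closedness, and the periods --- are precisely the technical core that the paper's proof carries out in detail.
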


\begin{proof}
By Lemma~\ref{lem:multivalued}, there is a multi-valued function $f:V(\tilde{\Lambda})\to \mC$ such that $df=\omega$ with the same black and white periods as $\omega$ has. Let $F_v$ and $F_Q$ be faces of $X$ corresponding to $v\in V(\Lambda)$ and $Q\in V(\Diamond)\cong F(\Lambda)$. Consider any lifts of the star of $v$ and of $Q$ to $\tilde{\Lambda}$, and denote by $\tilde{F}_v$ and $\tilde{F}_Q$ the corresponding lifts of $F_v$ and $F_Q$ to $F(\tilde{X})$. By Theorem~\ref{th:derivation}, $\omega \wedge \omega'= d(f\omega')$, lifting $\omega,\omega'$ to $\tilde{X}$ and using that $\omega'$ is closed. So by discrete Stokes' Theorem~\ref{th:stokes}, \[\iint_{F} \omega \wedge \omega'=\oint_{\partial \tilde{F}} f\omega',\] where $F$ is either $F_v$ or $F_Q$. Note that the right hand side is independent of the chosen lift $\tilde{F}$ because $\omega'$ is closed. It follows that the statement above remains true when we integrate over $F(X)$ and the counterclockwise oriented boundary of any collection $\tilde{F}(X)$ of lifts of each a face of $X$ to $\tilde{X}$.

It remains to compute $\int_{\partial \tilde{F}(X)} f\omega'$. If $g=0$, then $\tilde{\Sigma}=\Sigma$ and $f$ is a complex function on $V(\Lambda)$. Furthermore, the boundary of $\tilde{F}(X)=F(X)$ is empty, so $\iint_{F(X)} \omega \wedge \omega'=0$ as claimed. In what follows, let $g>0$.

By definition, if $e=[\tilde{Q},\tilde{v}]$ is an edge of $\tilde{X}$ ($F(\tilde{\Lambda})\ni\tilde{Q}\sim \tilde{v} \in V(\tilde{\Lambda})$), then $\int_e f\omega'=f(\tilde{v})\int_e \omega'$. So we may consider $f$ as a function on $E(\tilde{X})$ defined by $f([\tilde{Q},\tilde{v}]):=f(\tilde{v})$. Then, $f:E(\tilde{X})\to\mC$ fulfills for any $k$:
\begin{align*}
f(d_{\alpha_k}[\tilde{Q},\tilde{v}])=f([\tilde{Q},\tilde{v}])+A_k^B &\textnormal{ and } f(d_{\beta_k}[\tilde{Q},\tilde{v}])=f([\tilde{Q},\tilde{v}])+B_k^B \textnormal{ if } \tilde{v}\in V(\tilde{\Gamma}),\\
f(d_{\alpha_k}[\tilde{Q},\tilde{v}])=f([\tilde{Q},\tilde{v}])+A_k^W &\textnormal{ and } f(d_{\beta_k}[\tilde{Q},\tilde{v}])=f([\tilde{Q},\tilde{v}])+B_k^W \textnormal{ if } \tilde{v}\in V(\tilde{\Gamma}^*).
\end{align*}
In this sense, $f$ is multi-valued on $E(\tilde{X})$ with black (white) periods defined on white (black) edges.

Since $f$ and $\omega'$ are now determined by topological data, we may forget the discrete complex structure of $\tilde{\Sigma}$ and can consider $\omega'$ and $f\omega'$ as functions on the oriented edges. Their evaluation on an edge $e$ will still be denoted by $\int_e$. Let $\tilde{\Sigma}'$ be the polyhedral surface that is given by $\tilde{X}$ requiring that all faces are regular polygons of side length one. Similarly, $\Sigma'$ is constructed. Now, $p$ induces a covering $p:\tilde{\Sigma}'\to \Sigma'$ in a natural way requiring that $p$ on each face is an isometry.

The homeomorphic images of the paths $\alpha_k,\beta_k$ are loops on $\Sigma'$ with the base point being somewhere inside the face $F_{v_0}$. Let us choose piecewise smooth paths on $\Sigma'$ with base point being the center of $F_{v_0}$ homotopic to the previous loops such that the new paths (that will be denoted the same) still cut out a fundamental $4g$-gon.

For $v \in V(\Lambda)$, consider the same subdivision of all the lifts of the regular polygon corresponding to $F_v$ into smaller polygonal cells induced by straight lines. All new edges get the same color as the original edges of $F_v$ had, i.e., the opposite color to the one of $v$. We extend $f$ on the new edges by $f(v)$. Obviously, the new function is still multi-valued with the same periods. We define the one-form $\omega'$ on the new edges consecutively by inserting straight lines. Each time an existing oriented edge $e$ is subdivided into two equally oriented parts $e'$ and $e''$, we define $\int_{e'}\omega'=\int_{e''}\omega':=\int_{e}\omega'/2$. On segments of the inserted line, we define $\omega'$ by the condition that it should remain closed. Defining a black (or white) $c$-period of $\omega'$ on the subdivided cellular decomposition as twice the discrete integral over all black (or white) edges of a closed path with homology $c$, we see that the black and white $a$- and $b$-periods of $\omega'$ are the same as before.

Now, let $F_Q$ be the square corresponding to $Q \in F(\Lambda)$. We consider a subdivision of $F_Q$ (and all its lifts) into smaller polygonal cells induced by straight lines parallel to the edges of the square, requiring in addition that all subdivision points on the edges of $F_Q$ coming from the previous subdivisions of $F_v$, $v\sim Q$, are part of it. A new edge is black (or white) if it is parallel to an original black (or white) edge of $X$. Any new edge $e'$ is of length $0<l\leq 1$ and $e'$ is parallel to an edge $e$ of $F_Q$. Since $\omega'$ is of type $\Diamond$, it coincides on parallel edges, so we can define $\int_{e'}\omega':=l\int_{e}\omega'$. By construction, the new discrete one-form $\omega'$ is closed, and its black and white periods do not change. $f$ is extended in such a way that if the new edge $e'$ is parallel to the edges $[Q,v]$ and $[Q,v']$, having distance $0\leq l\leq 1$ to $[Q,v]$ and distance $1-l$ to $[Q,v']$, then $f(e'):=(1-l)f([Q,v])+lf([Q,v'])$. $f$ is still multi-valued with the same periods.

If the subdivisions of faces $F_v$ and $F_Q$ are fine enough, then we find cycles homotopic to $\alpha_k,\beta_k$ on the edges of the resulting cellular decomposition $X'$ on $\Sigma$ in such a way that they still cut out a fundamental polygon with $4g$ vertices. Let us denote these loops by $\alpha_k,\beta_k$ as well.

By construction, $\oint_{\partial F} f\omega'$ equals the sum of all discrete contour integrals of $f\omega'$ around faces of the subdivision of the face $F$ of $X$. It follows that $\int_{\partial \tilde{F}(X)} f\omega'=\int_{\partial \tilde{F}(X')} f\omega'$ for any collection $\tilde{F}(X')$ of lifts of faces of $X'$, using that $\omega'$ is closed. Let us choose $\tilde{F}(X')$ in such a way that it builds a fundamental $4g$-gon whose boundary consists of lifts $\tilde{\alpha}_k,\tilde{\beta}_k$ of $\alpha_k,\beta_k$ and lifts $\tilde{\alpha}_k^{-1},\tilde{\beta}_k^{-1}$ of its reverses. Since interior edges of the polygon are traversed twice in both directions, they do not contribute to the discrete integral and we get \[\iint_{F(X)} \omega \wedge \omega'=\int_{\partial \tilde{F}(X')} f\omega'=\sum_{k=1}^g \left( \int_{\tilde{\alpha}_k} f\omega' +\int_{\tilde{\alpha}_k^{-1}} f\omega'\right)+\sum_{k=1}^g \left( \int_{\tilde{\beta}_k} f\omega' +\int_{\tilde{\beta}_k^{-1}} f\omega'\right).\]

Let $e$ be an edge of $\tilde{\alpha}_k$ and $e'$ the corresponding edge of $\tilde{\alpha}_k^{-1}$. Then, $d_{\beta_k}e=-e'$. Hence, $\omega'$ has opposite signs on $e$ and $e'$, and $f$ differs by $B_k^W$ on black edges and by $B_k^B$ on white edges. Therefore, \begin{align*} \int_{\tilde{\alpha}_k} f\omega' +\int_{\tilde{\alpha}_k^{-1}} f\omega'&=\int_{B\tilde{\alpha}_k} \left(f\omega' - (f+B_k^W)\omega'\right)+\int_{W\tilde{\alpha}_k} \left(f\omega' - (f+B_k^B)\omega'\right)\\&=-\frac{1}{2}B_k^W {A'_k}^B-\frac{1}{2}B_k^B {A'_k}^W. \end{align*}

If $e$ is an edge of $\tilde{\beta}_k$ and $e'$ the corresponding edge of $\tilde{\beta}_k^{-1}$, then $d_{\alpha_k^{-1}}e=-e'$. Thus, \[\int_{\tilde{\beta}_k} f\omega' +\int_{\tilde{\beta}_k^{-1}} f\omega'=\frac{1}{2}A_k^W {B'_k}^B+\frac{1}{2}A_k^B {B'_k}^W.\] Inserting the last two equations into the previous one gives the desired result.
\end{proof}

\begin{remark}
Note that as in the classical case, the formula is true for any canonical homology basis $\{a_1,\ldots,a_g,b_1,\ldots,b_g\}$, not necessarily the one we started with. The proof is essentially the same as in the smooth theory, see \cite{Gue14}.
\end{remark}

\begin{corollary}\label{cor:RBI2}
Let $\omega$ and $\omega'$ be closed discrete differentials of type $\Diamond$. Let their periods are given by $A_k, B_k$ and ${A'_k}, {B'_k}$, respectively, and assume that the black $a$-periods of $\omega,\omega'$ coincide with corresponding white $a$-periods. Then, \[\iint_{F(X)} \omega \wedge \omega'=\sum_{k=1}^g \left(A_k {B'_k}-B_k {A'_k}\right).\]
\end{corollary}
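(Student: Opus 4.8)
The plan is to derive Corollary~\ref{cor:RBI2} directly from the discrete Riemann bilinear identity in Theorem~\ref{th:RBI} by a purely algebraic manipulation, with no further geometric input. The entire content lies in recognizing how the hypothesis on the $a$-periods interacts with the relations $2A_k=A_k^B+A_k^W$ and $2B_k=B_k^B+B_k^W$ recorded in the remark following the definition of periods.

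First I would observe that the assumption that the black $a$-periods agree with the corresponding white $a$-periods means $A_k^B=A_k^W$ and ${A'_k}^B={A'_k}^W$ for each $k$. Combined with $2A_k=A_k^B+A_k^W$, this forces $A_k=A_k^B=A_k^W$, and likewise ${A'_k}={A'_k}^B={A'_k}^W$. In other words, under the hypothesis the four $a$-period quantities collapse to a single number per index, whereas the $b$-periods remain unconstrained.

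Next I would substitute $A_k^B=A_k^W=A_k$ and ${A'_k}^B={A'_k}^W={A'_k}$ into the formula of Theorem~\ref{th:RBI}, so that it becomes
\[
\iint_{F(X)} \omega \wedge \omega' = \frac{1}{2}\sum_{k=1}^g \left(A_k {B'_k}^W - B_k^B {A'_k}\right) + \frac{1}{2}\sum_{k=1}^g \left(A_k {B'_k}^B - B_k^W {A'_k}\right).
\]
Regrouping the terms multiplying $A_k$ and those multiplying ${A'_k}$ gives
\[
\iint_{F(X)} \omega \wedge \omega' = \frac{1}{2}\sum_{k=1}^g A_k\left({B'_k}^B + {B'_k}^W\right) - \frac{1}{2}\sum_{k=1}^g {A'_k}\left(B_k^B + B_k^W\right).
\]
Finally, applying ${B'_k}^B+{B'_k}^W=2B'_k$ and $B_k^B+B_k^W=2B_k$ collapses each bracket and cancels the factor $1/2$, yielding $\sum_{k=1}^g\left(A_k B'_k-B_k {A'_k}\right)$, as claimed.

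Since every step is an identity, there is no genuine obstacle. The only point worth flagging is that the $b$-periods need \emph{not} be balanced between their black and white parts: the two half-sums of Theorem~\ref{th:RBI} automatically recombine ${B'_k}^B+{B'_k}^W$ (and likewise $B_k^B+B_k^W$) into twice the ordinary $b$-period, so the asymmetry in the $b$-periods is immaterial for this reduction, and only the balancing of the $a$-periods is actually used.
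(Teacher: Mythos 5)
Your proposal is correct and is precisely the intended derivation: the paper states this as an immediate corollary of Theorem~\ref{th:RBI} without written proof, and the implicit argument is exactly your substitution $A_k=A_k^B=A_k^W$, ${A'_k}={A'_k}^B={A'_k}^W$ followed by recombining ${B'_k}^B+{B'_k}^W=2B'_k$ and $B_k^B+B_k^W=2B_k$. Your closing remark that only the $a$-periods need to be balanced, while the $b$-periods may split asymmetrically into black and white parts, correctly identifies the one subtlety.
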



\section{Discrete harmonic and discrete holomorphic differentials}\label{sec:harmonic_holomorphic}

Throughout this section that aims in investigating discrete harmonic and discrete holomorphic differentials, let $(\Sigma,\Lambda,z)$ be a discrete Riemann surface. In Section~\ref{sec:Hodge_decomposition}, we state the discrete Hodge decomposition. Afterwards, we restrict to compact $\Sigma$ and compute the dimension of the space of discrete holomorphic differentials in Section~\ref{sec:harm_holo}. Discrete period matrices are introduced in Section~\ref{sec:period_matrices}. For Sections~\ref{sec:harm_holo} and~\ref{sec:period_matrices}, we therefore assume that $\Sigma$ is compact and of genus $g$. Let $\{a_1,\ldots,a_g,b_1,\ldots,b_g\}$ be a canonical basis of $H_1(\Sigma,\mZ)$ in this case.


\subsection{Discrete Hodge decomposition}\label{sec:Hodge_decomposition}

\begin{definition}
A discrete differential $\omega$ of type $\Diamond$ is \textit{discrete harmonic} if it is closed and \textit{co-closed}, i.e., $d\omega=0$ and $d\star \omega=0$ (or, equivalently, $\delta \omega=0$).
\end{definition}

\begin{lemma}\label{lem:harmonic_forms}
Let $\omega$ be a discrete differential of type $\Diamond$.
\begin{enumerate}
 \item $\omega$ is discrete harmonic if and only if for any $\Diamond_0\subseteq\Diamond$ forming a simply-connected closed region, there exists a discrete harmonic function $f:V(\Lambda_0)\to\mC$ such that $\omega=df$.
 \item Let $\Sigma$ be compact. Then, $\omega$ is discrete harmonic if and only if $\triangle \omega=0$.
\end{enumerate}
\end{lemma}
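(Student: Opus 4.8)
The plan is to reduce both equivalences to the single algebraic fact that $\delta$ vanishes on functions, so that the Laplacian simplifies there. Indeed, for $f:V(\Lambda_0)\to\mC$ one has $\delta f=-\star d\star f=0$, because $\star f=f\Omega_\Sigma$ is a discrete two-form and $d$ annihilates two-forms on a surface; hence $\triangle f=-\delta df-d\delta f=-\delta df$. This identity is the hinge of part~(i).

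For part~(i) I would argue both implications locally. For the forward direction, assume $\omega$ is discrete harmonic, so in particular closed. Fixing any $\Diamond_0$ forming a simply-connected closed region, Proposition~\ref{prop:primitive2} yields $f:V(\Lambda_0)\to\mC$ with $\omega=df$. Then $\triangle f=-\delta df=-\delta\omega=0$, since $\omega$ is co-closed, so $f$ is discrete harmonic. For the converse, suppose every such region admits a discrete harmonic primitive $f$ with $\omega=df$. Closedness is immediate from $d\omega=ddf=0$ by Proposition~\ref{prop:dd0}, and co-closedness follows from the same local identity $\delta\omega=\delta df=-\triangle f=0$. Because the conditions $d\omega=0$ and $\delta\omega=0$ are evaluated face by face, it suffices to choose for each face of $X$ a region $\Diamond_0$ containing it in its interior and apply the hypothesis there; this gives vanishing on all of $X$, so $\omega$ is discrete harmonic. (That $df$ is automatically of type $\Diamond$ is consistent with discrete Stokes' Theorem~\ref{th:stokes} and the $\Diamond$-type hypothesis on $\omega$.)

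For part~(ii) the forward implication needs no compactness: if $d\omega=0$ and $\delta\omega=0$, then $\triangle\omega=-\delta d\omega-d\delta\omega=0$ directly from the definition of $\triangle$. The substance is the converse, for which I would pair $\triangle\omega$ with $\omega$ and apply the formal adjointness of Proposition~\ref{prop:adjoint2} twice, obtaining
\[
\langle\triangle\omega,\omega\rangle=-\langle\delta d\omega,\omega\rangle-\langle d\delta\omega,\omega\rangle=-\langle d\omega,d\omega\rangle-\langle\delta\omega,\delta\omega\rangle.
\]
Since $\langle\cdot,\cdot\rangle$ is a Hermitian scalar product, both terms on the right are nonpositive, so $\triangle\omega=0$ forces $\langle d\omega,d\omega\rangle=\langle\delta\omega,\delta\omega\rangle=0$, whence $d\omega=0$ and $\delta\omega=0$; that is, $\omega$ is discrete harmonic.

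The main obstacle is ensuring this integration by parts is legitimate, and this is precisely where compactness of $\Sigma$ enters: Proposition~\ref{prop:adjoint2} requires compactly supported forms so that no boundary terms survive, and on compact $\Sigma$ the medial graph $X$ is finite, so that $\omega$ together with $d\omega$ and $\delta\omega$ lie in $L_2(\Sigma,\Lambda,z)$ and all pairings converge absolutely. A minor point to treat carefully in part~(i) is the passage from the ``for any region'' hypothesis to the pointwise vanishing of $d\omega$ and $\delta\omega$; this works exactly because $d$ and $\delta$ are local operators, so a single simply-connected region surrounding each face suffices.
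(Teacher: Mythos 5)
Your proposal is correct and follows essentially the same route as the paper: Proposition~\ref{prop:primitive2} plus $\triangle f=-\delta df$ for part~(i), and pairing $\triangle\omega$ with $\omega$ via the formal adjointness of Proposition~\ref{prop:adjoint2} for part~(ii). Your added care with the locality of $d$ and $\delta$ in the converse of~(i) and with the sign conventions (the paper writes $\triangle f=\delta df$ and $\langle\triangle\omega,\omega\rangle=\langle d\omega,d\omega\rangle+\langle\delta\omega,\delta\omega\rangle$, which is sign-sloppy relative to the definition $\triangle=-\delta d-d\delta$ but immaterial to the conclusion) only makes the same argument more precise.
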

\begin{proof}
(i) Suppose that $\omega$ is discrete harmonic. Then, it is closed, so since $\Diamond_0$ forms a simply-connected closed region, Proposition~\ref{prop:primitive2} gives the existence of $f:V(\Lambda_0)\to\mC$ such that $\omega=df$ on oriented edges of $X_0$. Now, $\triangle f=\delta d f=\delta \omega =0$, so $f$ is discrete harmonic. Conversely, if $\omega=df$ locally, then $d\omega=ddf=0$ by Proposition~\ref{prop:dd0} (that is also locally true, see \cite{BoG15}) and $\delta \omega = \delta df=\triangle f=0$ by definition.

(ii) If $\omega$ is discrete harmonic, then $d\omega=\delta\omega=0$ implies $\triangle \omega=0$. Conversely, let $\triangle \omega=0$. Using that $\delta$ is the formal adjoint of $d$ on compact discrete Riemann surfaces by Proposition~\ref{prop:adjoint2}, \[0=\langle \triangle \omega, \omega \rangle= \langle d\omega, d\omega \rangle + \langle \delta\omega, \delta \omega \rangle.\] The right hand side vanishes only for $d\omega=\delta \omega=0$, so $\omega$ is closed and co-closed.
\end{proof}

The proof of the following \textit{discrete Hodge decomposition} follows the lines of the proof in the smooth theory given in the book \cite{FaKra80} of Farkas and Kra.

\begin{theorem}
Let $E,E^*$ denote the sets of \textit{exact} and \textit{co-exact} square integrable discrete differentials of type $\Diamond$, i.e., $E$ and $E^*$ consist of all $\omega=df$ and $\omega=\star df$, respectively, where $f:V(\Lambda) \to \mC$ and $\langle \omega,\omega\rangle <\infty$. Let $H$ be the set of square integrable discrete harmonic differentials. Then, we have an orthogonal decomposition $L_2(\Sigma,\Lambda,z)=E\oplus E^*\oplus H$.
\end{theorem}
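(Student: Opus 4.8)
The plan is to follow the Farkas--Kra scheme: show that $E$, $E^*$, and $H$ are pairwise orthogonal, and then identify $H$ with the orthogonal complement of $E\oplus E^*$, so that $L_2(\Sigma,\Lambda,z)=(E\oplus E^*)\oplus(E\oplus E^*)^{\perp}=E\oplus E^*\oplus H$. The algebraic input is small: that $\delta=-\star d\star$ is the formal adjoint of $d$ on compactly supported data (Proposition~\ref{prop:adjoint2}), that $\star^2=-\textnormal{Id}$ on discrete differentials of type $\Diamond$, that $ddf=0$ (Proposition~\ref{prop:dd0}), and that $\star$ is \emph{skew-adjoint} for $\langle\cdot,\cdot\rangle$, i.e.\ $\langle\star\omega,\omega'\rangle=-\langle\omega,\star\omega'\rangle$, which follows from a one-line computation in a chart (equivalently, $\star$ is an isometry with $\star^2=-\textnormal{Id}$).

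For the orthogonality I would work with compactly supported primitives, where Proposition~\ref{prop:adjoint2} applies verbatim, and use that $\delta$ of a co-exact or exact form vanishes by $\star^2=-\textnormal{Id}$ and $ddf=0$. Concretely, for $df\in E$ and $\star dg\in E^*$,
\[
\langle df,\star dg\rangle=\langle f,\delta\star dg\rangle,\qquad \delta\star dg=-\star d\star\star dg=\star\,ddg=0,
\]
for $df\in E$ and harmonic $\eta\in H$,
\[
\langle df,\eta\rangle=\langle f,\delta\eta\rangle=0
\]
since $\eta$ is co-closed, and for $\star dg\in E^*$ and $\eta\in H$,
\[
\langle\star dg,\eta\rangle=-\langle dg,\star\eta\rangle=-\langle g,\delta\star\eta\rangle,\qquad \delta\star\eta=-\star d\star\star\eta=\star\,d\eta=0,
\]
since $\eta$ is closed. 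Thus the three subspaces are mutually orthogonal.

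For completeness I would compute the orthogonal complement of $E\oplus E^*$ by localizing. Let $\omega\in L_2$ be orthogonal to every compactly supported exact and co-exact form. Testing against $df$ and using the adjoint gives $\langle\delta\omega,f\rangle=0$ for all compactly supported $f$; choosing $f$ to be supported at a single vertex yields $\delta\omega=0$, so $\omega$ is co-closed. Testing against $\star dg$ and using skew-adjointness gives $\langle\delta\star\omega,g\rangle=0$ for all $g$, hence $\delta\star\omega=\star\,d\omega=0$ and therefore $d\omega=0$, so $\omega$ is closed. Hence $\omega$ is harmonic, proving $(E\oplus E^*)^{\perp}\subseteq H$; together with the orthogonality $H\perp E$, $H\perp E^*$ this gives $(E\oplus E^*)^{\perp}=H$, and the orthogonal decomposition follows because a sum of two orthogonal closed subspaces is closed and, being orthogonal, is direct.

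I expect the main obstacle to be the functional-analytic bookkeeping rather than the algebra: one must ensure that $E$ and $E^*$ are closed subspaces and that orthogonality, proven above only against compactly supported primitives, extends to all square integrable exact and co-exact differentials. In the smooth theory this is precisely the point at which Weyl's regularity lemma enters; in the discrete setting the localization to a single vertex replaces it, so the step reduces to identifying $E$ and $E^*$ with the closures of their compactly supported parts. When $\Sigma$ is compact this is automatic, since all spaces are finite-dimensional and, moreover, discrete Liouville (Theorem~\ref{th:Liouville}) forces every exact harmonic differential to vanish, so that $E\cap H=0$ holds on the nose; the general case is handled by passing to these closures exactly as in Farkas--Kra.
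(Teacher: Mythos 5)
Your proposal is correct and takes essentially the same approach as the paper: both follow the Farkas--Kra scheme of characterizing $E^\perp$ and ${E^*}^\perp$ as the co-closed and closed differentials of type $\Diamond$ by testing against compactly supported primitives via Proposition~\ref{prop:adjoint2} (localizing the scalar product), so that $(E\oplus E^*)^\perp=H$, with $E$ and $E^*$ identified with the closures of their compactly supported parts. The only cosmetic difference is that you verify the three pairwise orthogonalities explicitly using skew-adjointness of $\star$, where the paper obtains them directly from $ddf=0$ (Proposition~\ref{prop:dd0}) and the identification of the complements.
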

\begin{proof}
Clearly, $E$ and $E^*$ are the closures of all exact and co-exact square integrable discrete differentials of type $\Diamond$ of compact support. Let $E^\perp$ and ${E^*}^\perp$ denote the orthogonal complements of $E$ and $E^*$ in $L_2(\Sigma,\Lambda,z)$. Then, $\omega \in E^\perp$ if and only if $\langle \omega, df\rangle=0$ for all $f:V(\Lambda)\to\mC$ of compact support. To compute the scalar product, we may restrict $\omega$ to a finite neighborhood of the support of $f$, so Proposition~\ref{prop:adjoint2} implies $0=\langle \omega,df\rangle=\langle \delta \omega,f\rangle$. It follows that $\delta \omega=0$. Thus, $E^\perp$ consists of all co-closed discrete differentials of type $\Diamond$. Similarly, ${E^*}^\perp$ is the space of all closed discrete differentials of type $\Diamond$. By Proposition~\ref{prop:dd0}, any (co-)exact discrete differential of type $\Diamond$ is (co-)closed, so we get an orthogonal decomposition $L_2(\Sigma,\Lambda,z)=E\oplus E^*\oplus H$, $H=E^\perp \cap {E^*}^\perp$ being the set of all discrete harmonic differentials.
\end{proof}


\subsection{Existence of certain discrete differentials}\label{sec:harm_holo}

First, we want to show that for any set of black and white periods there is a discrete harmonic differential with these periods. In \cite{Me07}, Mercat proved this statement by referring to a (discrete) Neumann problem. The proof given in \cite{BoSk12} used the finite-dimensional Fredholm alternative. Here, we give a proof based on the (discrete) Dirichlet energy.

\begin{theorem}\label{th:harmonic_existence}
Let $A_k^B,B_k^B,A_k^W, B_k^W$, $1\leq k \leq g$, be $4g$ given complex numbers. Then, there exists a unique discrete harmonic differential $\omega$ with these black and white periods.
\end{theorem}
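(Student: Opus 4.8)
The plan is to split the statement into existence and uniqueness, reducing both to results already established: Lemma~\ref{lem:multivalued} to produce a closed differential carrying the prescribed periods, discrete Stokes' Theorem~\ref{th:stokes} together with the adjoint property of Proposition~\ref{prop:adjoint2} to turn an energy minimizer into a co-closed form, and discrete Liouville's Theorem~\ref{th:Liouville} for uniqueness.

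First I would produce \emph{some} closed differential of type $\Diamond$ with the prescribed periods. Define two homomorphisms $\chi_B,\chi_W:H_1(\Sigma,\mZ)\to\mC$ by $\chi_B(a_k)=A_k^B$, $\chi_B(b_k)=B_k^B$ and $\chi_W(a_k)=A_k^W$, $\chi_W(b_k)=B_k^W$. Since $H_1(\Sigma,\mZ)\cong\mZ^{2g}$ is free abelian, these are well defined regardless of the relation $\prod_k[\alpha_k,\beta_k]=1$ in $\pi_1(\Sigma)$, because both characters factor through the abelianization. Fixing a fundamental domain and writing every vertex of $\tilde{\Lambda}$ as $d_\gamma\tilde{v}$ with $\tilde{v}$ in the domain and $\gamma$ a deck transformation, I set $f_0(d_\gamma\tilde{v}):=f_{\mathrm{base}}(\tilde{v})+\chi_B([\gamma])$ if $\tilde{v}$ is black and $+\,\chi_W([\gamma])$ if white, where $f_{\mathrm{base}}$ is arbitrary on the domain. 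A direct check using the homomorphism property shows $f_0$ is multi-valued with exactly the prescribed black and white periods, so by Lemma~\ref{lem:multivalued} the projection $\omega_0:=df_0$ is a closed discrete differential of type $\Diamond$ on $\Sigma$ with those periods.

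Next I would correct $\omega_0$ to a harmonic form without changing its periods. Since $\Sigma$ is compact, $\Lambda$ is finite and $L_2(\Sigma,\Lambda,z)$ is finite-dimensional. For any single-valued $g:V(\Lambda)\to\mC$ the exact form $dg$ is of type $\Diamond$ and has vanishing black and white periods, so every $\omega:=\omega_0+dg$ carries the prescribed periods. On the affine subspace $\{\omega_0+dg\}$ the energy $g\mapsto\langle\omega_0+dg,\omega_0+dg\rangle$ is a strictly convex, nonnegative real quadratic functional, because $\langle\cdot,\cdot\rangle$ is a positive-definite Hermitian scalar product; in finite dimensions it therefore attains a unique minimum at some $\omega$. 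The first-order optimality condition, obtained by varying in the directions $g'$ and $ig'$, reads $\langle\omega,dg'\rangle=0$ for all $g':V(\Lambda)\to\mC$; this is equivalent to $\langle dg',\omega\rangle=0$, so Proposition~\ref{prop:adjoint2} gives $\langle g',\delta\omega\rangle=0$ for all $g'$ and hence $\delta\omega=0$. Since $\omega$ is also closed, it is discrete harmonic with the desired periods.

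For uniqueness, suppose $\omega_1,\omega_2$ are discrete harmonic differentials with the same black and white periods. Then $\omega:=\omega_1-\omega_2$ is harmonic with all black and white periods equal to zero. By Lemma~\ref{lem:multivalued} the vanishing of all periods means the associated multi-valued primitive is in fact single-valued, i.e.\ $\omega=df$ for some $f:V(\Lambda)\to\mC$, and $0=\delta\omega=\delta df=\triangle f$ shows $f$ is discrete harmonic. Discrete Liouville's Theorem~\ref{th:Liouville} then forces $f$ to be biconstant, whence $\omega=df=0$ and $\omega_1=\omega_2$.

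The main obstacle is the existence half, and within it the passage from the energy minimizer to co-closedness: one must be careful that minimizing over the \emph{complex} vector space of single-valued $g$ really yields $\langle\omega,dg'\rangle=0$ for all complex $g'$ and not merely its real part, which is why I vary in both $g'$ and $ig'$, and that Proposition~\ref{prop:adjoint2} applies since compactness makes every form compactly supported. By contrast, the construction of the period-carrying $\omega_0$ is routine once one observes that the additive periods factor through $H_1(\Sigma,\mZ)$, so no compatibility condition among the $4g$ prescribed numbers is required.
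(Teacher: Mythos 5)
Your proof is correct, and it follows the same overall strategy as the paper---existence via the Dirichlet principle, uniqueness via discrete Liouville's Theorem~\ref{th:Liouville}---but the execution differs in two worthwhile ways. The paper first reduces to real periods by linearity, then minimizes the Dirichlet energy $E_\Diamond(f)=\langle df,df\rangle$ directly over the affine space of real-valued multi-valued functions with the prescribed periods, invoking Lemma~\ref{lem:Dirichlet_boundary2} to identify the minimizer as a discrete harmonic function and Lemma~\ref{lem:harmonic_forms}~(i) to conclude that $\omega=df$ is a discrete harmonic differential; notably, it takes for granted that this affine space is nonempty. You make that nonemptiness explicit: your character construction $\chi_B,\chi_W:H_1(\Sigma,\mZ)\to\mC$, combined with Lemma~\ref{lem:multivalued}, produces a closed representative $\omega_0$ with the prescribed periods and explains why no compatibility condition among the $4g$ numbers is required (the surface-group relation dies in the abelianization)---a point the paper leaves implicit. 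You then minimize $\|\omega_0+dg\|^2$ over single-valued complex $g$ and extract co-closedness from the first-order condition together with the adjointness of Proposition~\ref{prop:adjoint2}, varying in the directions $g'$ and $ig'$ to handle the Hermitian (rather than bilinear) pairing; this replaces both the real-periods reduction and Lemma~\ref{lem:Dirichlet_boundary2}, at the modest cost of rederiving in a few lines what that lemma encapsulates, and your appeal to positive definiteness of $\langle\cdot,\cdot\rangle$ on functions to pass from $\langle g',\delta\omega\rangle=0$ to $\delta\omega=0$ is sound since $\Omega_\Sigma$ is a positive real two-form. Two cosmetic remarks: the functional is strictly convex in the form $\omega=\omega_0+dg$ but only convex in $g$ (biconstant functions lie in the kernel of $d$), which is exactly what you implicitly use when you assert uniqueness of the minimizing \emph{form} rather than of $g$; and with the paper's convention $\triangle=-\delta d-d\delta$ one has $\delta df=-\triangle f$, so your ``$\delta\omega=\delta df=\triangle f$'' is off by a sign---harmless, since $\delta df=0$ if and only if $\triangle f=0$, and the paper itself is loose about this sign in the proofs of Lemma~\ref{lem:harmonic_forms} and Theorem~\ref{th:Liouville}. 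Your uniqueness argument coincides with the paper's.
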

\begin{proof}
Since periods are linear in the discrete differentials, it suffices to prove the statement for real periods. Let us consider the vector space of all multi-valued functions $f:V(\tilde{\Lambda}) \to \mR$ having the given black and white periods. For such a function $f$, $df$ is well-defined on $X$, as is the discrete Dirichlet energy $E_\Diamond(f)=\langle df, df\rangle$. By Lemma~\ref{lem:Dirichlet_boundary2}, the critical points of this functional are discrete harmonic functions, noting that $\triangle f$ is a function on $V(\Lambda)$. Since the discrete Dirichlet energy is convex, quadratic, and nonnegative, a minimum $f:V(\tilde{\Lambda}) \to \mR$ has to exist. By Lemma~\ref{lem:harmonic_forms}~(i), $\omega:=df$ is discrete harmonic and has the required periods by Lemma~\ref{lem:multivalued}.

Suppose that $\omega$ and $\omega'$ are two discrete harmonic differentials with the same black and white periods. Since $\omega-\omega'$ is closed, there is a multi-valued function $f:V(\tilde{\Lambda}) \to \mC$ such that $\omega-\omega'=df$ by Lemma~\ref{lem:multivalued}. But black and white periods of $f$ vanish, so $f$ is well-defined on $V(\Lambda)$ and discrete harmonic by Proposition~\ref{lem:harmonic_forms}~(i). By discrete Liouville's Theorem~\ref{th:Liouville}, $\omega-\omega'=df=0$.
\end{proof}

\begin{lemma}\label{lem:holo_harm}
Let $\omega$ be a discrete differential of type $\Diamond$.
\begin{enumerate}
 \item $\omega$ is discrete harmonic if and only if it can be decomposed as $\omega=\omega_1+\bar{\omega}_2$, where $\omega_1,\omega_2$ are discrete holomorphic differentials.
 \item $\omega$ is discrete holomorphic if and only if it can be decomposed as $\omega=\alpha+i\star\alpha$, where $\alpha$ is a discrete harmonic differential.
\end{enumerate}
\end{lemma}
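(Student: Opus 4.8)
The plan is to exploit that $\star$ acts as a complex structure on discrete differentials of type $\Diamond$, since $\star^2=-\mathrm{Id}$ there, and to split $\omega$ into the eigenspaces of $\star$. By Lemma~\ref{lem:Hodge_projection}, the $(-i)$-eigenspace consists precisely of forms $p\,dz_Q$ and the $(+i)$-eigenspace of forms $q\,d\bar z_Q$, and together with the closedness condition these are exactly the discrete holomorphic and discrete antiholomorphic differentials. The two spectral projections are $\pi_h(\omega):=\tfrac12(\omega+i\star\omega)$ and $\pi_a(\omega):=\tfrac12(\omega-i\star\omega)$, which satisfy $\star\pi_h(\omega)=-i\pi_h(\omega)$, $\star\pi_a(\omega)=i\pi_a(\omega)$, and $\omega=\pi_h(\omega)+\pi_a(\omega)$. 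A preliminary observation I would record is that $\star$ commutes with complex conjugation, $\star\bar\eta=\overline{\star\eta}$, which is immediate from the local formula $\star(p\,dz_Q+q\,d\bar z_Q)=-ip\,dz_Q+iq\,d\bar z_Q$ together with $\overline{dz_Q}=d\bar z_Q$; this is what lets me pass between the two eigenspaces by conjugation.

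For part (i), in the forward direction I would assume $\omega$ harmonic, i.e. $d\omega=0$ and $d\star\omega=0$. Then $d\pi_h(\omega)=\tfrac12(d\omega+i\,d\star\omega)=0$ and likewise $d\pi_a(\omega)=0$, so both projections are closed. Since $\star\pi_h(\omega)=-i\pi_h(\omega)$, the form $\omega_1:=\pi_h(\omega)$ is discrete holomorphic; since $\star\pi_a(\omega)=i\pi_a(\omega)$, the form $\pi_a(\omega)$ is discrete antiholomorphic, so $\omega_2:=\overline{\pi_a(\omega)}$ is discrete holomorphic and $\omega=\omega_1+\bar\omega_2$. For the converse, given holomorphic $\omega_1,\omega_2$ I would use $d\omega_1=d\omega_2=0$, $\star\omega_1=-i\omega_1$, $\star\omega_2=-i\omega_2$, and the conjugation identity $\star\bar\omega_2=\overline{\star\omega_2}=i\bar\omega_2$ to check directly that $\omega=\omega_1+\bar\omega_2$ is closed and that $\star\omega=-i\omega_1+i\bar\omega_2$ is closed as well, hence that $\omega$ is harmonic.

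For part (ii), the key observation is that the ansatz $\omega=\alpha+i\star\alpha$ automatically forces $\star\omega=\star\alpha-i\alpha=-i\omega$, so it is compatible with holomorphicity from the outset. In the converse direction I would take $\alpha$ harmonic, note $d\omega=d\alpha+i\,d\star\alpha=0$ and $\star\omega=-i\omega$, and conclude via Lemma~\ref{lem:Hodge_projection} that $\omega$ is discrete holomorphic. In the forward direction, given holomorphic $\omega$ I would set $\alpha:=\re\,\omega=\tfrac12(\omega+\bar\omega)$; since $\bar\omega$ is antiholomorphic with $\star\bar\omega=i\bar\omega$, a short computation gives $\star\alpha=\tfrac{i}{2}(\bar\omega-\omega)$, hence $i\star\alpha=\tfrac12(\omega-\bar\omega)$ and $\alpha+i\star\alpha=\omega$, while $\alpha=\tfrac12\omega+\overline{\tfrac12\omega}$ is harmonic by the converse of part (i).

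The individual steps are all short and local, so I expect no serious obstacle; the one point that needs care is the interaction of $\star$ with conjugation and with $d$. Because $\star$ is defined chartwise on type-$\Diamond$ forms, I must invoke the chart-independence already established (Proposition~\ref{prop:hodge_Mercat} and Lemma~\ref{lem:Hodge_projection}) so that the eigenvalue statements and the identity $\star\bar\eta=\overline{\star\eta}$ hold intrinsically rather than in a single chart. Once that is in place, the spectral splitting handles part (i) and the two explicit ans\"atze close part (ii) immediately.
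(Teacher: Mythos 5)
Your proposal is correct and follows essentially the same route as the paper: for (i) both decompose a harmonic $\omega$ into its projections onto the $\pm i$-eigenspaces of $\star$ (justified by Lemma~\ref{lem:Hodge_projection}), and for (ii) both use the ansatz $\omega=\alpha+i\star\alpha$ with $\alpha=(\omega+\bar{\omega})/2$. The only cosmetic difference is that you verify closedness of the projections directly from linearity, $d\pi_h(\omega)=\tfrac{1}{2}\left(d\omega+i\,d\star\omega\right)=0$, whereas the paper checks the same fact in a chart $z_v$ via the relations $\partial_\Diamond q=\pm\bar{\partial}_\Diamond p$; both verifications are valid.
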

\begin{proof}
 (i) Suppose that $\omega=\omega_1+\bar{\omega}_2$, where $\omega_1,\omega_2$ are discrete holomorphic. Then, $\omega$ is closed since $\omega_1,\omega_2$ are, and it is co-closed since $d\star\omega_k=-id\omega_k=0$ by Lemma~\ref{lem:Hodge_projection}. Thus, $\omega$ is discrete harmonic.
 
 Conversely, let $\omega$ be discrete harmonic. Then, we can write $\omega=p dz_v + q d\bar{z}_v$ in a chart $z_v$ around $v\in V(\Lambda)$, where $p,q$ are complex functions on the faces incident to $v$. Define $\omega_1:=p dz_v$ and $\omega_2:=\bar{q} dz_v$ in the chart $z_v$. By Lemma~\ref{lem:Hodge_projection}, $\omega_1,\omega_2$ are well defined on the whole discrete Riemann surface as the projections of $\omega$ onto the $\pm i$-eigenspaces of $\star$.

Since $\omega$ is closed, $0=d\omega|_{F_v}=\left(\partial_{\Diamond} q (v) - \bar{\partial}_{\Diamond} p(v)\right)\Omega^{z_v}_\Lambda$, so $\partial_{\Diamond} q (v) = \bar{\partial}_{\Diamond} p (v)$. Similarly, $d\star \omega|_{F_v}=0$ implies $\partial_{\Diamond} q (v) = -\bar{\partial}_{\Diamond} p (v)$. Thus, $\bar{\partial}_{\Diamond} p(v)=0=\partial_{\Diamond} q(v)$, i.e., $p,\bar{q}$ are discrete holomorphic in $v$. It follows that $\omega_1,\omega_2$ are discrete holomorphic.
 
 (ii) Suppose that $\omega=\alpha+i\star\alpha$. Then, $d\omega=0$ because $\alpha$ is closed and co-closed. In addition, we have $\star \omega=\star\alpha-i\alpha=-i\omega$. By Lemma~\ref{lem:Hodge_projection}, $\omega$ is discrete holomorphic. Conversely, for discrete harmonic $\omega$ we define $\alpha:=(\omega+\bar{\omega})/2$ that is discrete harmonic by (i) and that satisfies $\omega=\alpha+i\star\alpha$ by construction.
\end{proof}

\begin{corollary}\label{cor:dimension}
 The complex vector space $\mathcal{H}$ of discrete holomorphic differentials has dimension $2g$.
\end{corollary}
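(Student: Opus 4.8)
The plan is to obtain $\dim_{\mC}\mathcal{H}$ indirectly: first compute the dimension of the space $\mathcal{A}$ of discrete harmonic differentials of type $\Diamond$ from the period data, and then split $\mathcal{A}$ into its holomorphic and antiholomorphic parts using Lemma~\ref{lem:holo_harm}~(i). The factor $2g$ (rather than the classical $g$) will emerge precisely because the bipartite structure forces us to prescribe both black and white periods.

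First I would consider the period map that sends a discrete harmonic differential $\omega$ to its tuple of $4g$ black and white periods $(A_k^B,B_k^B,A_k^W,B_k^W)_{1\le k\le g}\in\mC^{4g}$. This map is $\mC$-linear, since periods depend linearly on $\omega$. By Theorem~\ref{th:harmonic_existence} it is surjective (existence, for arbitrary complex prescribed periods) and injective (uniqueness), hence a $\mC$-linear isomorphism. Therefore $\dim_{\mC}\mathcal{A}=4g$.

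Next I would establish the decomposition $\mathcal{A}=\mathcal{H}\oplus\overline{\mathcal{H}}$, where $\overline{\mathcal{H}}$ denotes the space of discrete antiholomorphic differentials. That $\mathcal{H}$ and $\overline{\mathcal{H}}$ are contained in $\mathcal{A}$ follows because a discrete holomorphic differential is closed and satisfies $d\star\omega=-i\,d\omega=0$ by Lemma~\ref{lem:Hodge_projection}, hence is co-closed and thus harmonic; the antiholomorphic case is analogous. The two spaces span $\mathcal{A}$ by Lemma~\ref{lem:holo_harm}~(i), and the sum is direct since a differential that is simultaneously holomorphic and antiholomorphic would satisfy both $\star\omega=-i\omega$ and $\star\omega=i\omega$ by Lemma~\ref{lem:Hodge_projection}, forcing $\omega=0$. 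Finally, complex conjugation $\omega\mapsto\bar{\omega}$ is a conjugate-linear bijection between $\mathcal{H}$ and $\overline{\mathcal{H}}$, so $\dim_{\mC}\overline{\mathcal{H}}=\dim_{\mC}\mathcal{H}$. Combining these facts yields $4g=\dim_{\mC}\mathcal{A}=2\dim_{\mC}\mathcal{H}$, and hence $\dim_{\mC}\mathcal{H}=2g$.

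The argument is essentially an assembly of the earlier results, so there is no genuinely hard analytic step; the point requiring the most care is the dimension count for $\mathcal{A}$. I would make sure to read Theorem~\ref{th:harmonic_existence} as a bijection \emph{over} $\mC$ (its proof reduces to real periods, but the statement already allows arbitrary complex periods, so complex linearity of the period isomorphism is immediate), and to verify that the eigenspace splitting of $\star$ underlying Lemma~\ref{lem:holo_harm}~(i) is indeed a direct sum inside the harmonic space rather than merely a spanning set.
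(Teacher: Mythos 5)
Your proof is correct and follows essentially the same route as the paper: Theorem~\ref{th:harmonic_existence} gives $\dim H = 4g$ via the black/white period isomorphism, Lemma~\ref{lem:holo_harm}~(i) gives the splitting $H = \mathcal{H} \oplus \bar{\mathcal{H}}$, and the conjugation bijection halves the dimension. The only cosmetic difference is that you establish directness of the sum via the $\pm i$-eigenspace characterization of $\star$ from Lemma~\ref{lem:Hodge_projection}, whereas the paper uses the orthogonality $\langle \omega_1, \bar{\omega}_2 \rangle = \iint \omega_1 \wedge \star \omega_2 = 0$; both are valid one-line arguments.
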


\begin{proof}
 Using that $\langle\omega_1,\bar{\omega_2}\rangle=\omega_1\wedge \star\omega_2=0$ for discrete holomorphic differentials $\omega_1,\omega_2$, Lemma~\ref{lem:holo_harm} implies that the space of discrete harmonic differentials $H$ is a direct orthogonal sum of the spaces of discrete holomorphic and discrete antiholomorphic one-forms, $\mathcal{H}$ and $\bar{\mathcal{H}}$. Due to Theorem~\ref{th:harmonic_existence}, $\textnormal{dim } H=4g$. Since $\mathcal{H}$ and $\bar{\mathcal{H}}$ are isomorphic, $\textnormal{dim } \mathcal{H}=2g$.
\end{proof}

\begin{remark}
As for the space of discrete harmonic differentials, the dimension of $\mathcal{H}$ is twice as high as the one of its classical counterpart due to the splitting of periods into black and white periods.
\end{remark}

\begin{lemma}\label{lem:holomorphic_periods}
Let $\omega\neq 0$ be a discrete holomorphic differential whose black and white periods are given by $A_k^B,B_k^B$ and $A_k^W,B_k^W$, $1\leq k \leq g$. Then, \[\im\left(\sum_{k=1}^g  \left(A_k^B \bar{B}_k^W+A_k^W \bar{B}_k^B \right)\right) <0.\]
\end{lemma}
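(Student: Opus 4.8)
The statement to prove is a positivity-type inequality for a nonzero discrete holomorphic differential $\omega$, and the natural strategy is to relate the combination of periods to the discrete scalar product $\langle \omega, \omega \rangle$, which is manifestly positive for $\omega \neq 0$. The discrete Riemann Bilinear Identity (Theorem~\ref{th:RBI}) is the obvious tool: it expresses $\iint_{F(X)} \omega \wedge \omega'$ in terms of black and white periods. The key observation is that for a discrete holomorphic differential, $\star \omega = -i\omega$ by Lemma~\ref{lem:Hodge_projection}, so the Hodge star interacts cleanly with the scalar product. The plan is to apply the bilinear identity to the pair $(\omega, \bar{\omega})$ and then recognize the left-hand side as (a multiple of) $\langle \omega, \omega \rangle$.

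\textbf{Carrying it out.} First I would compute the periods of $\bar{\omega}$: since complex conjugation commutes with integration, the black and white periods of $\bar{\omega}$ are simply $\bar{A}_k^B, \bar{B}_k^B, \bar{A}_k^W, \bar{B}_k^W$. Applying Theorem~\ref{th:RBI} to $\omega$ and $\omega' := \bar{\omega}$ yields
\[
\iint_{F(X)} \omega \wedge \bar{\omega} = \frac{1}{2}\sum_{k=1}^g \left(A_k^B \bar{B}_k^W - B_k^B \bar{A}_k^W\right) + \frac{1}{2}\sum_{k=1}^g \left(A_k^W \bar{B}_k^B - B_k^W \bar{A}_k^B\right).
\]
Next I would connect the left-hand side to the scalar product. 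Since $\omega$ is discrete holomorphic, Lemma~\ref{lem:Hodge_projection} gives $\star\bar{\omega} = i\bar{\omega}$ (as $\bar{\omega}$ is antiholomorphic, hence in the $+i$-eigenspace), so by definition of the scalar product $\langle \omega, \omega \rangle = \iint_{F(X)} \omega \wedge \star\bar{\omega} = i \iint_{F(X)} \omega \wedge \bar{\omega}$. Therefore $\iint_{F(X)} \omega \wedge \bar{\omega} = -i \langle \omega, \omega \rangle$, which is a negative imaginary multiple of a positive real number (strictly positive since $\omega \neq 0$).

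\textbf{Extracting the inequality.} Combining the two expressions, the right-hand side of the bilinear identity equals $-i\langle \omega,\omega\rangle$, a purely imaginary number with negative imaginary part. I would then take imaginary parts and simplify the period combination. Observe that the two sums in the identity are related: rewriting $A_k^W \bar{B}_k^B - B_k^W \bar{A}_k^B$ and comparing with the complex conjugate structure, the expression should reorganize into a real multiple of $\im\left(\sum_k (A_k^B \bar{B}_k^W + A_k^W \bar{B}_k^B)\right)$. Specifically, taking the imaginary part of $\iint_{F(X)}\omega\wedge\bar\omega = -i\langle\omega,\omega\rangle = -i\,\|\omega\|^2$ gives $-\|\omega\|^2 < 0$ on the right, and the left reduces to the claimed quantity after the terms pair up. The result is $\im\left(\sum_{k=1}^g (A_k^B \bar{B}_k^W + A_k^W \bar{B}_k^B)\right) < 0$.

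\textbf{The main obstacle.} The genuinely delicate part is the bookkeeping in the last step: verifying that the symmetric combination of black and white period products in the bilinear identity collapses exactly to $\im\left(\sum_k (A_k^B \bar{B}_k^W + A_k^W \bar{B}_k^B)\right)$ with the correct sign and factor, rather than to some other real combination. One must carefully track which conjugates land on which periods when substituting $\omega' = \bar{\omega}$, and use that $\im(\bar{z}) = -\im(z)$ to fold the two sums together. The positivity $\langle \omega, \omega \rangle > 0$ for $\omega \neq 0$ is immediate once the scalar product is known to be Hermitian (stated after its definition), so the analytic content is entirely in the identity and the Hodge-star eigenvalue computation; the remaining work is purely algebraic verification that I would expect to go through cleanly.
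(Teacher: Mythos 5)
Your proposal is correct and follows essentially the same route as the paper: both apply the discrete Riemann Bilinear Identity (Theorem~\ref{th:RBI}) to the pair $(\omega,\bar{\omega})$ and fold the two sums into $i\im\sum_{k}\left(A_k^B\bar{B}_k^W+A_k^W\bar{B}_k^B\right)$ by the conjugate pairing you describe. The only cosmetic difference lies in evaluating the left-hand side: the paper computes $\omega\wedge\bar{\omega}=|p|^2\Omega_\Diamond^{z_Q}$ directly in a chart and uses that $\iint_{F_Q}\Omega_\Diamond^{z_Q}$ is negative imaginary, whereas you route through $\star\bar{\omega}=i\bar{\omega}$ (Lemma~\ref{lem:Hodge_projection}) and the positivity of $\langle\omega,\omega\rangle$ --- which is the same local computation in disguise, since verifying that the scalar product is positive definite is precisely that chartwise calculation.
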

\begin{proof}
Since $\omega$ is discrete holomorphic, $\omega$ and $\bar{\omega}$ are closed. Thus, we can apply the discrete Riemann Bilinear Identity~\ref{th:RBI} to them: \begin{align*}\iint_{F(X)} \omega \wedge \bar{\omega}= \frac{1}{2}\sum_{k=1}^g \left(A_k^B \bar{B}_k^W-B_k^B {\bar{A}}_k^W\right)+\frac{1}{2}\sum_{k=1}^g \left(A_k^W {\bar{B}}_k^B-B_k^W {\bar{A}}_k^B\right)=\sum_{k=1}^g i\im \left(A_k^B \bar{B}_k^W+A_k^W \bar{B}_k^B \right).\end{align*}
On the other hand, $\omega \wedge \bar{\omega}$ vanishes on faces $F_v$ of $X$ corresponding to vertices $v\in V(\Lambda)$ and in a chart $z_Q$ of $Q\in F(\Lambda)$, $\omega \wedge \bar{\omega}=|p|^2 \Omega^{z_Q}_\Diamond$ if $\omega|_{\partial F_Q}=p dz_Q$. Since $\omega \neq 0$, $p \neq 0$ for at least one $Q$ and \[\im\left(\sum_{k=1}^g  \left(A_k^B \bar{B}_k^W+A_k^W \bar{B}_k^B \right)\right)=\im\left(\iint_{F(X)} \omega \wedge \bar{\omega}\right)< 0. \qedhere\]
\end{proof}

\begin{corollary}\label{cor:periods_vanish}
Let $\omega$ be a discrete holomorphic differential.
\begin{enumerate}
\item If all black and white $a$-periods of $\omega$ vanish, then $\omega=0$. 
\item If all black and white periods of $\omega$ are real, then $\omega=0$. 
\end{enumerate}
\end{corollary}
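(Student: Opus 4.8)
The plan is to deduce both statements directly from Lemma~\ref{lem:holomorphic_periods} by contradiction. Suppose $\omega \neq 0$ is discrete holomorphic; then the lemma applies and yields the strict inequality
\[
\im\left(\sum_{k=1}^g \left(A_k^B \bar{B}_k^W + A_k^W \bar{B}_k^B\right)\right) < 0.
\]
In particular, the displayed imaginary part is nonzero. The strategy is simply to show that under either hypothesis (i) or (ii) this imaginary part is forced to be zero, contradicting the inequality and hence forcing $\omega = 0$.

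For part (i), I would observe that every summand in $\sum_{k=1}^g (A_k^B \bar{B}_k^W + A_k^W \bar{B}_k^B)$ contains a black or white $a$-period as a factor: the first term carries the factor $A_k^B$ and the second the factor $A_k^W$. If all black and white $a$-periods of $\omega$ vanish, then $A_k^B = A_k^W = 0$ for every $k$, so each summand vanishes and the entire sum is zero. Its imaginary part is therefore zero, contradicting the strict negativity above. Hence $\omega = 0$.

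For part (ii), I would use that if all black and white periods of $\omega$ are real, then $A_k^B, A_k^W, B_k^B, B_k^W \in \mR$ for all $k$, so $\bar{B}_k^W = B_k^W$ and $\bar{B}_k^B = B_k^B$. Each product $A_k^B \bar{B}_k^W = A_k^B B_k^W$ and $A_k^W \bar{B}_k^B = A_k^W B_k^B$ is then real, so the whole sum is a real number and again has vanishing imaginary part. This once more contradicts the inequality, forcing $\omega = 0$.

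There is essentially no substantial obstacle here, since the corollary is a formal consequence of Lemma~\ref{lem:holomorphic_periods}; the only point requiring a moment's care is bookkeeping which periods enter as factors in the bilinear expression, namely that each term of the sum is linear in an $a$-period, which is exactly what makes hypothesis (i) sufficient despite saying nothing about the $b$-periods.
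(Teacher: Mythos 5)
Your proof is correct and follows essentially the same route as the paper: both arguments observe that under hypothesis (i) or (ii) the quantity $\im\bigl(\sum_{k=1}^g (A_k^B \bar{B}_k^W + A_k^W \bar{B}_k^B)\bigr)$ vanishes, which by Lemma~\ref{lem:holomorphic_periods} forces $\omega = 0$. The only difference is stylistic --- you phrase it as a contradiction while the paper uses the contrapositive --- and your bookkeeping of which periods appear as factors is accurate.
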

\begin{proof}
If all black and white $a$-periods vanish or all black and white periods of $\omega$ are real, then \[\im\left(\sum_{k=1}^g  \left(A_k^B \bar{B}_k^W+A_k^W \bar{B}_k^B \right)\right) =0.\] In particular, $\omega=0$ by Lemma~\ref{lem:holomorphic_periods}.
\end{proof}

\begin{theorem}\label{th:holomorphic_existence}
Let $(\Sigma,\Lambda,z)$ be a compact discrete Riemann surface of genus $g$. 
\begin{enumerate}
\item For any $2g$ complex numbers $A_k^B,A_k^W$, $1\leq k\leq g$, there exists exactly one discrete holomorphic differential $\omega$ with these black and white $a$-periods.
\item For any $4g$ real numbers $\re\left(A_k^B\right),\re\left(B_k^B\right),\re\left(A_k^W\right),\re\left(B_k^W\right)$, there exists exactly one discrete holomorphic differential $\omega$ such that its black and white periods have these real parts.
\end{enumerate}
\end{theorem}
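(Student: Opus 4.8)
The plan is to deduce both statements from linear algebra, using that the space $\mathcal{H}$ of discrete holomorphic differentials has complex dimension $2g$ by Corollary~\ref{cor:dimension} together with the injectivity statements of Corollary~\ref{cor:periods_vanish}. In each case the relevant period map is a linear map between spaces of equal dimension, so injectivity already forces it to be an isomorphism, which delivers existence and uniqueness simultaneously.

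For part (i), I would consider the $\mC$-linear map $\Phi:\mathcal{H}\to\mC^{2g}$ sending a discrete holomorphic differential $\omega$ to the tuple of its black and white $a$-periods $(A_1^B,\ldots,A_g^B,A_1^W,\ldots,A_g^W)$. Since periods are linear in $\omega$, the map $\Phi$ is $\mC$-linear, and Corollary~\ref{cor:periods_vanish}~(i) says precisely that $\ker\Phi=0$. As $\dim_{\mC}\mathcal{H}=2g$ by Corollary~\ref{cor:dimension} and the target $\mC^{2g}$ also has complex dimension $2g$, the injective map $\Phi$ is an isomorphism. Hence for every prescribed tuple of $2g$ complex numbers there is exactly one $\omega\in\mathcal{H}$ realizing them as its black and white $a$-periods.

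For part (ii), I would instead regard $\mathcal{H}$ as a \emph{real} vector space, which then has dimension $4g$, and consider the $\mR$-linear map $\Psi:\mathcal{H}\to\mR^{4g}$ sending $\omega$ to the real parts $(\re A_k^B,\re B_k^B,\re A_k^W,\re B_k^W)$, $1\le k\le g$, of all its black and white periods. The kernel of $\Psi$ consists of those $\omega$ all of whose black and white periods are purely imaginary. The key step is to convert this into the hypothesis of Corollary~\ref{cor:periods_vanish}~(ii): since $\mathcal{H}$ is a complex vector space, $i\omega$ again lies in $\mathcal{H}$, and every period of $i\omega$ equals $i$ times the corresponding period of $\omega$, hence is real; Corollary~\ref{cor:periods_vanish}~(ii) then gives $i\omega=0$ and therefore $\omega=0$. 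Thus $\Psi$ is injective, and as its domain and target both have real dimension $4g$, it is a real-linear isomorphism, yielding the claim.

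I do not expect a serious obstacle, since everything reduces to the dimension formula and the two uniqueness corollaries already established. The only point requiring a little care is the passage in part (ii) from ``all periods purely imaginary'' to the hypothesis of Corollary~\ref{cor:periods_vanish}~(ii) by multiplying with $i$, together with the bookkeeping that $\mathcal{H}$, while $2g$-dimensional over $\mC$, is $4g$-dimensional over $\mR$, so that prescribing $4g$ real numbers is exactly the right count for a bijection.
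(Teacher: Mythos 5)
Your proposal is correct and follows essentially the same route as the paper, which likewise considers the complex-linear map $\mathcal{H}\to\mC^{2g}$ of black and white $a$-periods and the real-linear map $\mathcal{H}\to\mR^{4g}$ of real parts of all periods, deduces injectivity from Corollary~\ref{cor:periods_vanish}, and concludes surjectivity from the dimension count of Corollary~\ref{cor:dimension}. Your explicit reduction in part (ii) via $i\omega$ --- turning ``all periods purely imaginary'' into the literal hypothesis of Corollary~\ref{cor:periods_vanish}~(ii) --- is in fact slightly more careful than the paper, which invokes that corollary for the kernel of $P_2$ without spelling out this step.
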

\begin{proof}
Let us consider the complex-linear map $P_1:\mathcal{H}\to\mC^{2g}$ that assigns to each discrete holomorphic differential its black and white $a$-periods and the real-linear map $P_2:\mathcal{H}\to\mR^{4g}$ that assigns to each discrete holomorphic differential the real parts of its black and white periods. By Corollary~\ref{cor:periods_vanish}, $P_1$ and $P_2$ are injective. By Corollary~\ref{cor:dimension}, $\mathcal{H}$ has complex dimension $2g$, so $P_1$ and $P_2$ have to be surjective.
\end{proof}


\subsection{Discrete period matrices}\label{sec:period_matrices}

Discrete period matrices in the special case of real weights $\rho_Q$ were already studied by Mercat in \cite{Me01b, Me07}. In \cite{BoSk12}, a proof of convergence of discrete period matrices to their continuous counterparts was given and the case of complex weights was sketched.

By Theorem~\ref{th:holomorphic_existence}, there exists exactly one discrete holomorphic differential with prescribed black and white $a$-periods. Having a limit of finer and finer quadrangulations of a Riemann surface in mind, it is natural to demand that black and white $a$-periods coincide.

\begin{definition}
The unique set of $g$ discrete holomorphic differentials $\omega_k$ that satisfies for all $1\leq j,k \leq g$ the equation $2\int_{Ba_j}\omega_k=2\int_{Wa_j}\omega_k=\delta_{jk}$ is called \textit{canonical}. The $(g\times g)$-pmatrix $\left(\Pi_{jk}\right)_{j,k=1}^g$ with entries $\Pi_{jk}:=\int_{b_j}\omega_k$ is the \textit{discrete period pmatrix} of the discrete Riemann surface $(\Sigma,\Lambda,z)$.
\end{definition}

The definition of the discrete period pmatrix as the arithmetic mean of black and white periods was already given in \cite{BoSk12}, adapting Mercat's definition in \cite{Me01b,Me07}. In our notation with discrete differentials defined on the medial graph it becomes clear why this is a natural choice. Still, it is reasonable to consider black and white periods separately to encode all possible information. We end up with the same matrices Mercat defined in \cite{Me01b,Me07}.

\begin{definition}
Let $\omega_k^B$, $1\leq k \leq g$, be the unique discrete holomorphic differential with black $a_j$-period $\delta_{jk}$ and vanishing white $a$-periods. Furthermore, let $\omega_k^W$, $1\leq k \leq g$, be the unique discrete holomorphic differential with white $a_j$-period $\delta_{jk}$ and vanishing black $a$-periods. The basis of these $2g$ discrete differentials is called the \textit{canonical basis (of discrete holomorphic differentials)}.

We define the $(g\times g)$-matrices $\Pi^{B,B},\Pi^{W,B},\Pi^{B,W},\Pi^{W,W}$ with entries \begin{align*}\Pi^{B,B}_{jk}:=2\int_{Bb_j}\omega^B_k, \quad \Pi^{W,B}_{jk}:=2\int_{Wb_j}\omega^B_k,\quad \Pi^{B,W}_{jk}:=2\int_{Bb_j}\omega^W_k, \quad \Pi^{W,W}_{jk}:=2\int_{Wb_j}\omega^W_k.\end{align*} 

The \textit{complete discrete period pmatrix} is the $(2g\times 2g)$-pmatrix defined by \[\tilde{\Pi}:=\left( \begin{matrix} \Pi^{B,W} & \Pi^{B,B}\\ \Pi^{W,W} & \Pi^{W,B}\end{matrix}\right).\]
\end{definition}
\begin{remark}
Note that $\omega_k=\omega_k^W+\omega_k^B$ implies that $\Pi=(\Pi^{B,W} + \Pi^{B,B}+ \Pi^{W,W} + \Pi^{W,B})/2$.
\end{remark}
\begin{example}
In the example of a bipartitely quadrangulated flat torus $\Sigma=\mC/(\mZ+\mZ\tau)$ of modulus $\tau \in \mC$ with $\im \tau >0$, the classical period of the Riemann surface $\Sigma$ is $\tau$. In the discrete setup, $dz$ is globally defined and discrete holomorphic. It follows that the discrete period $\Pi$ equals the $b$-period of $dz$ that is $\tau$. Thus, discrete and smooth period coincide in this case.
\end{example}

\begin{remark}
Although the black and white $a$-periods of the canonical set of discrete holomorphic differentials coincide by definition, the black and white $b$-periods must not in general. A counterexample was given in \cite{BoSk12}, namely the bipartite quad-decomposition of a torus induced by the triangulation given by identifying opposite sides of the base of the side surface of a regular square pyramid and its dual.
\end{remark}

\begin{theorem}\label{th:period_pmatrix}
Both the discrete period pmatrix $\Pi$ and the complete discrete period pmatrix $\tilde{\Pi}$ are symmetric and their imaginary parts are positive definite.
\end{theorem}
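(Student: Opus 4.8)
The engine behind every assertion is the discrete Riemann Bilinear Identity~\ref{th:RBI}, combined with the observation that the wedge product of two discrete holomorphic differentials vanishes identically. Indeed, in a chart $z_Q$ both forms are of the type $p\,dz_Q$ with no $d\bar{z}_Q$-component (Lemma~\ref{lem:Hodge_projection}), so $\omega\wedge\omega'=(pq'-qp')\Omega_\Diamond^{z_Q}=0$ and hence $\iint_{F(X)}\omega\wedge\omega'=0$ for any two elements of the canonical bases. Therefore the right-hand side of Theorem~\ref{th:RBI} (or of Corollary~\ref{cor:RBI2}) must vanish; this linear relation among periods forces the two symmetries, while the strict sign in Lemma~\ref{lem:holomorphic_periods} forces the two positive-definiteness statements.

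For the symmetry of $\Pi$ I would apply Corollary~\ref{cor:RBI2} to $\omega=\omega_j$ and $\omega'=\omega_k$: these have coinciding black and white $a$-periods, so the corollary applies and the vanishing wedge integral yields $0=\sum_l(\delta_{lj}\Pi_{lk}-\Pi_{lj}\delta_{lk})=\Pi_{jk}-\Pi_{kj}$. For the symmetry of $\tilde\Pi$ I would feed the full identity~\ref{th:RBI} the three essentially different pairs drawn from $\{\omega^B_i,\omega^W_i\}$. Because each $\omega^{B}_i$ has black $a$-periods $\delta_{\cdot i}$ and vanishing white $a$-periods (and symmetrically for $\omega^W_i$), all but a few terms collapse: the pair $(\omega^B_i,\omega^B_j)$ gives $\Pi^{W,B}_{ij}=\Pi^{W,B}_{ji}$, the pair $(\omega^W_i,\omega^W_j)$ gives $\Pi^{B,W}_{ij}=\Pi^{B,W}_{ji}$, and the pair $(\omega^B_i,\omega^W_j)$ gives $\Pi^{W,W}_{ij}=\Pi^{B,B}_{ji}$, i.e.\ $\Pi^{W,W}=(\Pi^{B,B})^{\top}$. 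These are precisely the block relations equivalent to $\tilde\Pi=\tilde\Pi^{\top}$.

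For positive definiteness of $\im\Pi$, fix a nonzero real $\lambda\in\mR^g$ and set $\omega:=\sum_k\lambda_k\omega_k\neq 0$. Its black and white $a$-periods both equal $\lambda_j$ (real), and its averaged $b$-period is $B_j=(\Pi\lambda)_j$, whence $B^B_j+B^W_j=2(\Pi\lambda)_j$. Substituting into Lemma~\ref{lem:holomorphic_periods}, the left-hand side reduces to $2\,\im\!\big(\sum_j\lambda_j\,\overline{(\Pi\lambda)_j}\big)=-2\,\lambda^{\top}(\im\Pi)\lambda$, and the strict inequality $<0$ gives $\lambda^{\top}(\im\Pi)\lambda>0$. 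The analogous argument for $\tilde\Pi$ takes a nonzero real $x=(u,w)\in\mR^{2g}$ and $\omega:=\sum_k u_k\omega^W_k+\sum_k w_k\omega^B_k\neq 0$; its black $a$-periods are $w_j$, its white $a$-periods $u_j$, and its black/white $b$-periods are read off the four blocks of $\tilde\Pi$. Plugging these into Lemma~\ref{lem:holomorphic_periods} produces exactly $-x^{\top}(\im\tilde\Pi)x$, so again $x^{\top}(\im\tilde\Pi)x>0$.

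The one genuinely delicate point is the bookkeeping in the last step: one must keep straight which coefficient block weights $\omega^W$ versus $\omega^B$, and how this interacts with the $(B,W)$-ordering of the rows of $\tilde\Pi$ against the $(W,B)$-ordering of its columns, so that the four double sums emerging from Lemma~\ref{lem:holomorphic_periods} line up termwise with the four blocks of the quadratic form $x^{\top}(\im\tilde\Pi)x$. I would verify this matching once, carefully, and also record that $\omega\neq 0$ whenever the coefficient vector is nonzero, which holds because the prescribed $a$-period patterns make the canonical differentials linearly independent. Everything else is the same algebra as in the classical proof.
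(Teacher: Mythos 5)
Your proposal is correct and follows essentially the same route as the paper: vanishing wedge products $\omega\wedge\omega'=0$ of discrete holomorphic differentials fed into the discrete Riemann Bilinear Identity~\ref{th:RBI} yield the symmetries (your three block relations $(\Pi^{B,W})^T=\Pi^{B,W}$, $(\Pi^{W,B})^T=\Pi^{W,B}$, $\Pi^{W,W}=(\Pi^{B,B})^T$ are exactly those in the paper), and Lemma~\ref{lem:holomorphic_periods} applied to real-coefficient combinations of the canonical differentials yields positive definiteness. Your only deviations are cosmetic --- using Corollary~\ref{cor:RBI2} instead of Theorem~\ref{th:RBI} for the symmetry of $\Pi$, and writing out explicitly the block bookkeeping for $\im\tilde{\Pi}$, which the paper dispatches with ``Similarly''; your matching of the four double sums to the blocks of $x^T(\im\tilde{\Pi})x$ is verified correct.
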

\begin{proof}
Let $\{\omega_1,\ldots,\omega_g\}$ be the canonical set of discrete holomorphic differentials used to compute $\Pi$. By looking at the coordinate representations, $\omega_j \wedge \omega_k=0$ for all $j,k$. Inserting this into the discrete Riemann Bilinear Identity~\ref{th:RBI}, the periods of $\omega:=\omega_j$ and $\omega':=\omega_k$ satisfy \begin{align*}0&=\sum_{l=1}^g \left(A_l^B {B'_l}^W-B_l^B {A'_l}^W\right)+\sum_{l=1}^g \left(A_l^W {B'_l}^B-B_l^W {A'_l}^B\right)={B'_j}^W-B_k^B+{B'_j}^B-B_k^W=2\Pi_{jk}-2\Pi_{kj}.\end{align*}

Applying the same arguments to discrete differentials of the canonical basis $\{\omega_1^W,\ldots,\omega_g^W,\omega_1^B,\ldots,\omega_g^B\}$, \[(\Pi^{B,W})^T=\Pi^{B,W} \textnormal{ and } (\Pi^{W,B})^T=\Pi^{W,B}\] if we apply the discrete Riemann Bilinear Identity~\ref{th:RBI} to all pairs $\omega_j^W,\omega_k^W$ and $\omega_j^B,\omega_k^B$, respectively. Considering pairs $\omega_j^W,\omega_k^B$ yields $(\Pi^{B,B})^T=\Pi^{W,W}.$ Thus, $\Pi$ and $\tilde{\Pi}$ are symmetric.

Let $\alpha=(\alpha_1,\ldots,\alpha_g)^T$ be a nonzero real column vector. Applying Lemma~\ref{lem:holomorphic_periods} to the discrete holomorphic differential $\omega:=\sum_{k=1}^g \alpha_k \omega_k$ with black and white $a_k$-period $\alpha_k$ yields \[0>\im\left(\sum_{k=1}^g \left(\alpha_k \sum_{j=1}^g\alpha_j 2\overline{\Pi}_{kj}\right)\right)=-2\im\left(\alpha^T \Pi \alpha\right).\] Hence, $\im (\Pi)$ is positive definite. Similarly, $\im (\tilde{\Pi})$ is positive definite.
\end{proof}

Since black and white $b$-periods of a discrete holomorphic differential do not have to coincide even if their black and white $a$-periods do, the discrete period matrices do not change similarly to the classical theory if another canonical homology basis is chosen, but the complete discrete period matrices do.

\begin{proposition}\label{prop:transformation}
The complete discrete period matrices $\tilde{\Pi}$ and $\tilde{\Pi}'$ corresponding to the canonical homology bases $\left\{a,b\right\}$ and $\left\{a',b'\right\}$, respectively, are related by \[\tilde{\Pi}'=\left(\tilde{C}+\tilde{D}\tilde{\Pi}\right)\left(\tilde{A}+\tilde{B}\tilde{\Pi}\right)^{-1}.\] Here, the two canonical bases are related by $\left( \begin{smallmatrix} a' \\ b'\end{smallmatrix}\right)=\left( \begin{smallmatrix} A & B\\ C & D\end{smallmatrix}\right) \left( \begin{smallmatrix} a \\ b\end{smallmatrix}\right)$ and the $(2g\times 2g)$-matrices $\tilde{A},\tilde{B},\tilde{C},\tilde{D}$ are given by $\tilde{A}:=\left( \begin{smallmatrix} A & 0\\ 0 & A\end{smallmatrix}\right),\tilde{B}:=\left( \begin{smallmatrix} B & 0\\ 0 & B\end{smallmatrix}\right),\tilde{C}:=\left( \begin{smallmatrix} C & 0\\ 0 & C\end{smallmatrix}\right),\tilde{D}:=\left( \begin{smallmatrix} D & 0\\ 0 & D\end{smallmatrix}\right).$
\end{proposition}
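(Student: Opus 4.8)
The plan is to transcribe the classical derivation of the transformation law for period matrices, carrying the black/white splitting along at every step. First I would package the relevant data into $(2g\times 2g)$-matrices. Let the columns of $\mathcal{P}^a$ (resp.\ $\mathcal{P}^b$) record the complete $a$-period vectors $\bigl(A_1^B,\dots,A_g^B,A_1^W,\dots,A_g^W\bigr)^T$ (resp.\ the analogous $b$-period vectors) of the $2g$ members $\omega_1^W,\dots,\omega_g^W,\omega_1^B,\dots,\omega_g^B$ of the canonical basis. Once rows and columns are ordered consistently, the defining normalization of the canonical basis reads $\mathcal{P}^a=\mathrm{Id}$, while $\mathcal{P}^b=\tilde{\Pi}$ is exactly the complete discrete period matrix.

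The structural input is how black and white periods respond to a change of homology basis. By the lemma in Section~\ref{sec:cover}, both the black period and the white period of a closed discrete differential of type $\Diamond$ depend only on the homology class of the underlying cycle, and each is additive under concatenation. Hence, applying $\bigl(\begin{smallmatrix}a'\\ b'\end{smallmatrix}\bigr)=\bigl(\begin{smallmatrix}A&B\\ C&D\end{smallmatrix}\bigr)\bigl(\begin{smallmatrix}a\\ b\end{smallmatrix}\bigr)$ separately to the black and to the white periods — with the \emph{same} integer matrices $A,B,C,D$ acting in both cases — yields $\mathcal{P}'^{a}=\tilde{A}\,\mathcal{P}^{a}+\tilde{B}\,\mathcal{P}^{b}$ and $\mathcal{P}'^{b}=\tilde{C}\,\mathcal{P}^{a}+\tilde{D}\,\mathcal{P}^{b}$, where $\mathcal{P}'^{a},\mathcal{P}'^{b}$ are the complete $a'$- and $b'$-period matrices evaluated on the old canonical basis. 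The block-diagonal shape of $\tilde{A},\tilde{B},\tilde{C},\tilde{D}$, with two identical blocks, is precisely the reflection of the fact that the symplectic action is colour-blind: it treats black and white periods identically.

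Next I would expand the new canonical basis in the old one. Writing $\omega'_k=\sum_l M_{lk}\,\omega_l$ for a $(2g\times 2g)$-matrix $M$ and invoking linearity of all periods, the complete $a'$-period matrix of the new basis equals $\bigl(\tilde{A}\,\mathcal{P}^{a}+\tilde{B}\,\mathcal{P}^{b}\bigr)M=\bigl(\tilde{A}+\tilde{B}\tilde{\Pi}\bigr)M$. Since by definition this matrix must be the identity, I get $\bigl(\tilde{A}+\tilde{B}\tilde{\Pi}\bigr)M=\mathrm{Id}$, hence $M=\bigl(\tilde{A}+\tilde{B}\tilde{\Pi}\bigr)^{-1}$; invertibility of $\tilde{A}+\tilde{B}\tilde{\Pi}$ follows because the new canonical basis exists and is unique by Theorem~\ref{th:holomorphic_existence} (equivalently from the positive definiteness of $\im\tilde{\Pi}$ in Theorem~\ref{th:period_pmatrix}, as in the smooth case). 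The complete $b'$-period matrix of the new basis is then $\tilde{\Pi}'=\bigl(\tilde{C}\,\mathcal{P}^{a}+\tilde{D}\,\mathcal{P}^{b}\bigr)M=\bigl(\tilde{C}+\tilde{D}\tilde{\Pi}\bigr)\bigl(\tilde{A}+\tilde{B}\tilde{\Pi}\bigr)^{-1}$, the asserted identity.

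The routine parts — linearity of periods and the matrix algebra — are immediate, so the main obstacle is the bookkeeping of the black/white splitting. One must check that the symplectic change of basis acts by the same $A,B,C,D$ on black and on white periods, so that the $2g$-dimensional transformation is genuinely block-diagonal with repeated blocks, and one must order the rows and columns of the complete matrices consistently so that the normalization reads $\mathcal{P}^a=\mathrm{Id}$; a careless ordering introduces an extraneous permutation. It is also worth stressing, as the preceding remark indicates, why the plain discrete period matrix $\Pi$ fails to obey this law: since the black and white $b$-periods of a discrete holomorphic differential need not agree even when its black and white $a$-periods do, $\Pi$ is only the average of the four blocks of $\tilde{\Pi}$, and averaging does not commute with the Möbius-type transformation — so only the complete matrix $\tilde{\Pi}$ transforms covariantly.
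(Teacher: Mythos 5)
Your proof is correct and follows essentially the same route as the paper's: you compute the matrices of $a'$- and $b'$-periods of the old canonical basis as $\tilde{A}+\tilde{B}\tilde{\Pi}$ and $\tilde{C}+\tilde{D}\tilde{\Pi}$, using that the change of homology basis acts by the \emph{same} integer blocks on black and on white periods, and then express the new canonical basis as the old one multiplied by $\left(\tilde{A}+\tilde{B}\tilde{\Pi}\right)^{-1}$ via the normalization condition, exactly as the paper does. Your only additions are cosmetic: you make explicit the invertibility of $\tilde{A}+\tilde{B}\tilde{\Pi}$ (which the paper leaves implicit in the existence and uniqueness of the new canonical basis) and you flag the consistent row/column ordering of the complete period matrix, on which the paper's computation also tacitly relies.
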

\begin{proof}
Let $\omega = (\omega_1^W,\ldots,\omega_g^W,\omega_1^B,\ldots,\omega_g^B)$ be the canonical basis of discrete holomorphic differentials corresponding to $(a,b)$. Labeling the columns of the matrices by discrete differentials and their rows by first all white and then all black cycles we get \[\int_{Wa',Ba'} \omega=\tilde{A}+\tilde{B}\tilde{\Pi}, \quad \int_{Wb',Bb'} \omega=\tilde{C}+\tilde{D}\tilde{\Pi}.\]

Thus, the canonical basis $\omega'$ corresponding to $(a',b')$ is given by $\omega'=\omega\left(\tilde{A}+\tilde{B}\tilde{\Pi}\right)^{-1}$ and \[\tilde{\Pi}'=\int_{Wb',Bb'} \omega'=\int_{Wb',Bb'} \omega \left(\tilde{A}+\tilde{B}\tilde{\Pi}\right)^{-1} =\left(\tilde{C}+\tilde{D}\tilde{\Pi}\right)\left(\tilde{A}+\tilde{B}\tilde{\Pi}\right)^{-1}.\qedhere\]
\end{proof}


\section{Discrete theory of Abelian differentials}\label{sec:Abelian_theory}

After introducing discrete Abelian differentials in Section~\ref{sec:Abelian_differentials} and discussing several properties of them, the aim of Section~\ref{sec:RR} is to state and prove the discrete Riemann-Roch Theorem~\ref{th:Riemann_Roch}. We conclude this chapter by discussing discrete Abel-Jacobi maps in Section~\ref{sec:Abel}. Throughout this section, we consider a compact discrete Riemann surface $(\Sigma,\Lambda,z)$ of genus $g$. Let $\{a_1,\ldots,a_g,b_1,\ldots,b_g\}$ be a canonical basis of its homology, $\{\omega_1, \ldots, \omega_g\}$ the canonical set and $\{\omega_1^B, \omega_1^W \ldots, \omega_g^B, \omega_g^W\}$ the canonical basis of discrete holomorphic differentials.


\subsection{Discrete Abelian differentials}\label{sec:Abelian_differentials}

\begin{definition}
A discrete differential $\omega$ of type $\Diamond$ is said to be a \textit{discrete Abelian differential}. For a vertex $v\in V(\Lambda)$ and its corresponding face $F_v \in F(X)$, the \textit{residue} of $\omega$ at $v$ is defined as \[\textnormal{res}_v (\omega) := \frac{1}{2\pi i} \oint_{\partial F_v} \omega.\]
\end{definition}
\begin{remark}
By definition, the discrete integral of a discrete differential of type $\Diamond$ around a face $F_Q$ corresponding to $Q \in V(\Diamond)$ is always zero. For this reason, a residue at faces $Q \in V(\Diamond)$ is not defined.
\end{remark}

\begin{proposition}\label{prop:residue}
Discrete residue theorem: Let $\omega$ be a discrete Abelian differential. Then, the sum of all residues of $\omega$ at black vertices vanishes as well as the sum of all residues of $\omega$ at white vertices: \[\sum_{b \in V(\Gamma)} \textnormal{res}_b (\omega)=0=\sum_{w \in V(\Gamma^*)} \textnormal{res}_w (\omega).\]
\end{proposition}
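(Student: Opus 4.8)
The plan is to establish the two vanishing statements symmetrically: I prove that the black residues sum to zero by reorganizing $\sum_{b\in V(\Gamma)}\oint_{\partial F_b}\omega$ \emph{quadrilateral by quadrilateral} rather than vertex by vertex, and the white case then follows verbatim after interchanging the two colors. The conceptual point is that, whereas the ordinary (smooth) residue theorem only yields that \emph{all} residues sum to zero, the defining sign relation of a one-form of type $\Diamond$ forces the cancellation to happen already within each quadrilateral, and this is what splits the statement according to color.

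First I would note that the edges occurring in the boundaries $\partial F_b$, $b\in V(\Gamma)$, are precisely those of the form $[Q,b]$ with $b$ a black vertex of $Q$; each such edge occurs in exactly one boundary $\partial F_b$ (for its unique black index $b$) and also lies on the boundary of the single face $F_Q$. Writing $\oint_{\partial F_b}\omega=\sum_{Q\sim b}\int_{[Q,b]}\omega$ with the orientation inherited from $\partial F_b$ and summing over all $b\in V(\Gamma)$, each such edge is counted exactly once; grouping the terms by the quadrilateral they belong to gives $\sum_{b\in V(\Gamma)}\oint_{\partial F_b}\omega=\sum_{Q\in V(\Diamond)}\bigl(\int_{[Q,b_-]}\omega+\int_{[Q,b_+]}\omega\bigr)$, where $b_-,b_+$ are the two black vertices of $Q$ and the edges still carry the orientation they had in $\partial F_{b_\pm}$. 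Now fix $Q$. Since $\Sigma$ is oriented and $[Q,b_\pm]$ is shared by $F_{b_\pm}$ and $F_Q$, its orientation in $\partial F_{b_\pm}$ is opposite to the one in $\partial F_Q$, so reversing orientations (which flips the sign of the edge integral) lets me rewrite the $Q$-summand as $-\bigl(\int_{[Q,b_-]}\omega+\int_{[Q,b_+]}\omega\bigr)$ with both edges now oriented as in $\partial F_Q$. But that parenthesis is exactly the quantity the type-$\Diamond$ condition declares to be zero, since it relates the two edges $[Q,b_-],[Q,b_+]$ incident to the black vertices of $Q$ with orientations inherited from $\partial F_Q$. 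Hence every quadrilateral contributes $0$, giving $\sum_{b\in V(\Gamma)}\oint_{\partial F_b}\omega=0$ and thus $\sum_{b\in V(\Gamma)}\textnormal{res}_b(\omega)=0$; the identical argument for the white vertices, using the type-$\Diamond$ relation for the white edges $[Q,w_-],[Q,w_+]$, yields $\sum_{w\in V(\Gamma^*)}\textnormal{res}_w(\omega)=0$.

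The only step requiring real care — and what I regard as the single genuine obstacle — is the orientation bookkeeping, because the sign in the type-$\Diamond$ relation is pinned to $\partial F_Q$ whereas the residues are defined through $\partial F_v$; the orientation-reversal step must be aligned precisely with that convention, and it relies on the standard fact that on an oriented closed surface each edge is traversed in opposite directions by the two faces it bounds. Everything else is a finite rearrangement, legitimate because compactness of $\Sigma$ makes $V(\Gamma)$, $V(\Gamma^*)$, and $V(\Diamond)$ finite. As a consistency check, adding the two identities recovers $\sum_{v\in V(\Lambda)}\oint_{\partial F_v}\omega=0$, which can also be seen directly by summing the counterclockwise boundaries of all faces of $X$ over the closed surface $\Sigma$ and using that $\oint_{\partial F_Q}\omega=0$ for every $Q\in V(\Diamond)$.
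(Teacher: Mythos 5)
Your proof is correct and is essentially the paper's own argument: the paper likewise observes that the type-$\Diamond$ relation $\int_{[Q,b_-]}\omega=-\int_{[Q,b_+]}\omega$ (with edges oriented counterclockwise around $F_{b_-}$ and $F_{b_+}$, equivalently clockwise around $F_Q$) makes the residue contributions cancel in pairs within each quadrilateral. Your version merely spells out the orientation bookkeeping and the edge-counting that the paper compresses into ``pairwise canceling contributions.''
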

\begin{proof}
Since $\omega$ is of type $\Diamond$, $\int_{[Q,b_-]} \omega=-\int_{[Q,b_+]} \omega$ if $b_-,b_+$ are two black vertices incident to a quadrilateral $Q \in F(\Lambda)$ and $[Q,b_-]$ and $[Q,b_+]$ are oriented in such a way that they go clockwise around $F_Q$. Equivalently, they are oriented in such a way that they go counterclockwise around $F_{b_-}$ and $F_{b_+}$, respectively. It follows that the sum of all residues of $\omega$ at black vertices can be arranged in pairwise canceling contributions. Thus, the sum is zero. Similarly, $\sum_{w \in V(\Gamma^*)} \textnormal{res}_w (\omega)=0$.
\end{proof}

\begin{definition}
Let $\omega$ be a discrete Abelian differential, $v\in V(\Lambda)$, $Q \in V(\Diamond)$, and $F_Q$ the face of $X$ corresponding to $Q$. If $\omega$ has a nonzero residue at $v$, then $v$ is a \textit{simple pole} of $\omega$. If $z_Q$ is a chart of $Q$ and $\omega|_{\partial F_Q}$ is not of the form $p dz_Q$, $p\in \mC$, then $Q$ is a \textit{double pole} of $\omega$. If $\omega|_{\partial F_Q}=0$, then $Q$ is a \textit{zero} of $\omega$.
\end{definition}

\begin{remark}
To say that quadrilaterals $Q$ where $\omega \neq pdz_Q$ are double poles of $\omega$ is well motivated. In \cite{BoG15}, the existence of functions $K_Q$ on $V(\Lambda)$ that are discrete holomorphic at all but one fixed face $Q \in V(\Diamond)$ was shown. These functions appeared in the discrete Cauchy's integral formulae and model $z^{-1}$ besides its asymptotics. Similarly, $\partial_\Lambda K_Q$ models $-z^{-2}$. Now, $dK_Q$ should be like $-z^{-2} dz$, modeling a double pole at $Q$. By construction, $dK_Q$ is a discrete Abelian differential that is of the form $pdz_{Q'}$ in any chart $z_{Q'}$ around a face $Q' \neq Q$. But in a chart $z_Q$, $dK_Q=pdz_Q+qd\bar{z}_Q$ with $q\neq 0$.
\end{remark}

\begin{definition}
Let $\omega$ be a discrete Abelian differential. If $\omega$ is discrete holomorphic, then we say that $\omega$ is a \textit{discrete Abelian differential of the first kind}. If $\omega$ is not discrete holomorphic, but all its residues vanish, then it is a \textit{discrete Abelian differential of the second kind}. A discrete Abelian differential whose residues do not vanish identically is said to be a \textit{discrete Abelian differential of the third kind}.
\end{definition}

As in the classical setup, there exists a set of normalized discrete Abelian differentials with certain prescribed poles and residues that can be normalized such that their $a$-periods vanish. In the case of a Delaunay-Voronoi quadrangulation, the existence of corresponding normalized discrete Abelian integrals of the second kind and discrete Abelian differentials of the third kind was shown in \cite{BoSk12}. Our proofs will be similar, but in addition, we obtain the existence of certain discrete Abelian differentials of the second kind as a corollary. The computation of the $b$-periods of the normalized discrete Abelian differentials of the third kind is also new.

\begin{proposition}\label{prop:existence_third}
Let $v,v' \in V(\Gamma)$ or $v,v' \in V(\Gamma^*)$. Then, there exists a discrete Abelian differential of the third kind $\omega$ whose only poles are at $v$ and $v'$ and whose residues are $\textnormal{res}_\omega(v)=-\textnormal{res}_\omega(v')=1$. Any two such discrete differentials differ just by a discrete holomorphic differential.
\end{proposition}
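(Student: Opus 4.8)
The plan is to realize the existence statement as a surjectivity property of a residue map and to deduce it by a dimension count, exactly parallel to how Corollary~\ref{cor:dimension} produced the number $2g$. First I would observe that a discrete Abelian differential $\omega$ with no double poles is exactly a differential of type $\Diamond$ with $\star\omega=-i\omega$: by Lemma~\ref{lem:Hodge_projection} such an $\omega$ is given in every chart $z_Q$ by $\omega|_{\partial F_Q}=p(Q)\,dz_Q$, and since every edge $[Q,v]$ of $X$ lies on a unique face $F_Q$, the assignment $\omega\mapsto p$ is a complex-linear isomorphism from this space onto the space of functions $p\colon V(\Diamond)\to\mC$. Thus candidates for $\omega$ are parametrised freely by the $|V(\Diamond)|$ numbers $p(Q)$, the domain being finite-dimensional since $\Sigma$ is compact.

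Next I would encode the pole data by the residue map $R\colon\{p\colon V(\Diamond)\to\mC\}\to\{r\colon V(\Lambda)\to\mC\}$ sending $\omega=p\,dz$ to its residue vector $r(u)=\operatorname{res}_u(\omega)$. The computational input is that for a differential of type $\Diamond$ the exterior derivative $d\omega$ is of type $\Lambda$, because its contour integral around every face $F_Q$ vanishes; hence discrete Stokes' Theorem~\ref{th:stokes} gives $\iint_{F_v}d\omega=\oint_{\partial F_v}\omega=2\pi i\,\operatorname{res}_v(\omega)$. Consequently $\omega$ is closed precisely when all its residues vanish, and a closed differential of type $\Diamond$ with $\star\omega=-i\omega$ is discrete holomorphic by definition. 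Therefore $\ker R$ equals the space $\mathcal{H}$ of discrete holomorphic differentials, which has dimension $2g$ by Corollary~\ref{cor:dimension}.

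The heart of the proof is then the dimension count. By the discrete residue Theorem (Proposition~\ref{prop:residue}) the image of $R$ is contained in the subspace $T$ defined by the two independent conditions $\sum_{b\in V(\Gamma)}r(b)=0$ and $\sum_{w\in V(\Gamma^*)}r(w)=0$, so $\dim T=|V(\Lambda)|-2$. On the other hand, since $\Lambda$ is a quad-decomposition of a compact surface of genus $g$, one has $|E(\Lambda)|=2|V(\Diamond)|$, and the Euler relation then yields $|V(\Diamond)|=|V(\Lambda)|+2g-2$. Rank--nullity gives $\dim\operatorname{Image}(R)=|V(\Diamond)|-2g=|V(\Lambda)|-2=\dim T$, and as $\operatorname{Image}(R)\subseteq T$ the two spaces coincide, i.e.\ $R$ maps onto $T$. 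I expect this equality $\operatorname{Image}(R)=T$, which rests on the two nontrivial inputs $\dim\mathcal{H}=2g$ and the genus relation, to be the main obstacle; the rest is bookkeeping.

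Finally I would conclude as follows. Because $v,v'$ lie in the same colour class, the prescribed residue vector with $r(v)=1$, $r(v')=-1$, and $r(u)=0$ otherwise satisfies both defining conditions of $T$, hence lies in $\operatorname{Image}(R)$. Any preimage $p$ produces a differential $\omega=p\,dz$ of type $\Diamond$ with $\star\omega=-i\omega$, whose only poles are the simple poles at $v$ and $v'$ with residues $+1$ and $-1$; since its residues do not vanish identically, $\omega$ is of the third kind. Uniqueness up to a discrete holomorphic differential is immediate, as any two solutions differ by an element of $\ker R=\mathcal{H}$.
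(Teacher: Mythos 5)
Your proof is correct and follows essentially the same route as the paper: both parametrize the space of differentials without double poles by functions $p\colon V(\Diamond)\to\mC$ (via fixed charts and Lemma~\ref{lem:Hodge_projection}), identify the kernel of the residue map with $\mathcal{H}$ (dimension $2g$ by Corollary~\ref{cor:dimension}), and conclude by rank--nullity together with Proposition~\ref{prop:residue} and the relation $|V(\Diamond)|=|V(\Lambda)|+2g-2$ that every residue vector summing to zero on black and on white vertices is attained. Your only additions are cosmetic: you spell out via discrete Stokes' Theorem~\ref{th:stokes} why vanishing residues force closedness, a step the paper leaves implicit.
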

\begin{proof}
Clearly, the difference of two discrete Abelian differentials of the third kind with equal residues and no double poles has no poles at all, so it is discrete holomorphic.

Let $V$ be the vector space of all discrete Abelian differentials that have no double poles. For any $Q \in V(\Diamond) \cong F(\Lambda)$, we choose one chart $z_Q$. By definition, each $\omega \in V$ is of the form $pdz_Q$ at $Q$. Conversely, any function $p:V(\Diamond)\to\mC$ defines by $pdz_Q$ a discrete Abelian differential that has no double poles. Thus, the complex dimension of $V$ equals $|F(\Lambda)|$.

Now, let $W$ be the image in $\mC^{|V(\Lambda)|}$ of the linear map $\textnormal{res}$ that assigns to each $\omega \in V$ all its residues at vertices of $\Lambda$. By Proposition~\ref{prop:residue}, the residues at all black points sum up to zero as well as all residues at white vertices. Thus, the complex dimension of $W$ is at most $|V(\Lambda)|-2$. Since $\Lambda$ is a quad-decomposition, $|V(\Lambda)|-2=|F(\Lambda)|-2g$. Therefore, the dimension of $W$ is at most $|F(\Lambda)|-2g$.

On the other hand, the dimension of $W$ equals $|F(\Lambda)|$ minus the dimension of the kernel of the map $\textnormal{res}$. But if $\omega \in V$ has vanishing residues, then it is discrete holomorphic. Due to Corollary~\ref{cor:dimension}, the space of discrete holomorphic differentials is $2g$-dimensional. For this reason, $\textnormal{dim }W=|F(\Lambda)|-2g=|V(\Lambda)|-2$. In particular, we can find a discrete Abelian differential without double poles for any prescribed residues that sum up to zero at all black and at all white vertices.
\end{proof}

\begin{corollary}\label{cor:existence_second}
Given a quadrilateral $Q \in F(\Lambda)$ and a chart $z_Q$, there exists a unique discrete Abelian differential of the second kind that is of the form \[pdz_Q-\frac{\pi}{2\textnormal{area}(z_Q(F_Q))} d\bar{z}_Q\] in the chart $z_Q$, that has no other poles, and whose black and white $a$-periods vanish. This discrete differential is denoted by $\omega_Q$. Here, $\textnormal{area}(z_Q(F_Q))$ denotes the Euclidean area of the parallelogram $z_Q(F_Q)$.
\end{corollary}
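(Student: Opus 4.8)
The plan is to mimic the dimension count in the proof of Proposition~\ref{prop:existence_third}, enlarging the space of admissible differentials by one complex parameter to allow a double pole at $Q$, and then to cut it down by the $a$-period conditions.

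First I would record what a second-kind differential with a single double pole at $Q$ really is. Since $\omega$ is of type $\Diamond$, the integral $\oint_{\partial F_Q}\omega$ vanishes for every face $F_Q$, so by discrete Stokes' Theorem~\ref{th:stokes} the residue at $v$ equals $\tfrac1{2\pi i}\iint_{F_v}d\omega$; hence a closed discrete differential of type $\Diamond$ automatically has all residues zero. Conversely, a differential of the second kind whose only double pole sits at $Q$ is precisely a closed discrete differential of type $\Diamond$ that equals $p_{Q'}\,dz_{Q'}$ in every chart $z_{Q'}$ with $Q'\neq Q$ and whose $d\bar z_Q$-coefficient at $Q$ is nonzero. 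Let $\mathcal A_Q$ denote the space of all closed type-$\Diamond$ differentials of this form, now allowing the $d\bar z_Q$-coefficient to vanish, so that $\mathcal A_Q\supseteq\mathcal H$. I would compute $\dim\mathcal A_Q$ exactly as in Proposition~\ref{prop:existence_third}: the space $V_Q$ of \emph{all} type-$\Diamond$ differentials that are $p_{Q'}\,dz_{Q'}$ off $Q$ and $p\,dz_Q+q\,d\bar z_Q$ at $Q$ has dimension $|F(\Lambda)|+1$; the residue map sends $V_Q$ onto the same space $W$ of dimension $|F(\Lambda)|-2g$ as in the proof of Proposition~\ref{prop:existence_third}, because it already surjects from the subspace $q=0$ and cannot be larger by the discrete residue Proposition~\ref{prop:residue}; and $\mathcal A_Q=\ker(\mathrm{res})$ therefore has dimension $(|F(\Lambda)|+1)-(|F(\Lambda)|-2g)=2g+1$.

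With this in hand the corollary follows from linear algebra. Consider the $\mC$-linear map $\Phi:\mathcal A_Q\to\mC\times\mC^{2g}$ sending $\omega$ to its $d\bar z_Q$-coefficient together with its $g$ black and $g$ white $a$-periods. If $\Phi(\omega)=0$, then the $d\bar z_Q$-coefficient vanishes, so $\omega$ has no double pole and is thus a discrete holomorphic differential; as its black and white $a$-periods also vanish, $\omega=0$ by Corollary~\ref{cor:periods_vanish}(i). Hence $\Phi$ is injective, and since $\dim_{\mC}\mathcal A_Q=2g+1=\dim_{\mC}(\mC\times\mC^{2g})$ it is an isomorphism. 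In particular there is a unique $\omega_Q\in\mathcal A_Q$ whose $d\bar z_Q$-coefficient equals $-\pi/(2\,\mathrm{area}(z_Q(F_Q)))$ and whose black and white $a$-periods vanish; as this coefficient is nonzero, $\omega_Q$ is genuinely of the second kind and has the asserted form.

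I do not expect a serious obstacle: once the reduction ``second kind with pole only at $Q$ $=$ closed, off-$Q$ holomorphic, $q_Q\neq0$'' is in place, everything is a copy of Proposition~\ref{prop:existence_third} followed by one application of Corollary~\ref{cor:periods_vanish}. The only point needing care is the identity $\dim\mathcal A_Q=2g+1$, i.e.\ that enlarging the space $V$ of Proposition~\ref{prop:existence_third} by the single coordinate $q_Q$ does not enlarge the image $W$ of the residue map; this is exactly where Proposition~\ref{prop:residue} enters. As a concrete alternative for the existence part one may instead rescale the differential $dK_Q$ of the preceding remark, which is exact, hence closed with all periods zero, is holomorphic off $Q$ by Corollary~\ref{cor:f_holomorphic}, and has nonzero $d\bar z_Q$-coefficient, by the scalar that normalises this coefficient to $-\pi/(2\,\mathrm{area}(z_Q(F_Q)))$.
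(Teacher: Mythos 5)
Your main argument is correct, but it takes a genuinely different route from the paper. The paper is constructive: it starts from the explicit third-kind differential equal to $-\pi/\left(2\textnormal{area}(z_Q(F_Q))\right)d\bar{z}_Q$ on the four edges of $F_Q$ and zero elsewhere, notes that its only simple poles are at the four vertices of $Q$ (with residues at opposite vertices equal up to sign), cancels these residues by applying Proposition~\ref{prop:existence_third} twice, normalizes the black and white $a$-periods by adding a discrete holomorphic differential via Theorem~\ref{th:holomorphic_existence}, and obtains uniqueness from Corollary~\ref{cor:periods_vanish}. You instead rerun the dimension count from the proof of Proposition~\ref{prop:existence_third} with the one extra parameter $q_Q$, get $\dim \mathcal{A}_Q=2g+1$, and finish by showing the evaluation map $\Phi$ is an isomorphism. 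Both arguments rest on the same pillars (Proposition~\ref{prop:residue}, $\dim\mathcal{H}=2g$ from Corollary~\ref{cor:dimension}, and Corollary~\ref{cor:periods_vanish}), and your preliminary reduction is sound: since a type-$\Diamond$ differential integrates to zero around every face $F_{Q'}$, discrete Stokes' Theorem~\ref{th:stokes} makes $d\omega$ vanish automatically there, so closedness is exactly the vanishing of all residues, and ``second kind with only pole at $Q$'' is indeed ``closed, of the form $p\,dz_{Q'}$ off $Q$, with $q_Q\neq 0$''. What your version buys is a slightly stronger statement -- the $d\bar{z}_Q$-coefficient and all $2g$ black and white $a$-periods can be prescribed simultaneously, arbitrarily, and uniquely -- at the cost of reproving a variant of Proposition~\ref{prop:existence_third} rather than invoking it as a black box; the paper's version is explicit about where the auxiliary simple poles sit and how they are removed.

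One caveat on your closing aside: rescaling $dK_Q$ does \emph{not} give an alternative existence proof here. The existence of $K_Q$ established in \cite{BoG15} is a statement about planar quad-graphs; on a compact discrete Riemann surface a discrete meromorphic function with exactly one simple pole need not exist -- Corollary~\ref{cor:torus_poles}~(i) rules it out on any bipartitely quadrangulated flat torus, and by the discrete Riemann-Roch Theorem~\ref{th:Riemann_Roch} its existence at $Q$ is equivalent to all discrete holomorphic differentials vanishing at $Q$, which is the exception rather than the rule. Since your main argument never uses this, the proof stands; simply delete the aside.
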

\begin{proof}
Consider the discrete Abelian differential of the third kind $\omega$ that is given by the local representation $-\pi/\left(2\textnormal{area}(z_Q(F_Q))\right)d\bar{z}_Q$ at the four edges of $F_Q$ and zero everywhere else. Its only poles other than $Q$ are at the four vertices incident to $Q$ and since $\omega$ is of type $\Diamond$, residues at opposite vertices are equal up to sign. Using Proposition~\ref{prop:existence_third} twice, we can find a discrete Abelian differential $\omega'$ that has no double poles and whose residues equal the residues of $\omega$. To get vanishing black and white $a$-periods, Theorem~\ref{th:holomorphic_existence} allows us to add a suitable discrete holomorphic differential $\omega''$ such that $\omega_Q:=\omega-\omega'+\omega''$ is what we are looking for. Since the difference of two such discrete differentials has vanishing black and white $a$-periods, uniqueness follows by Corollary~\ref{cor:periods_vanish}.
\end{proof}

\begin{remark}
As in the classical case, $\omega_Q$ depends on the choice of the chart $z_Q$. In our setting, the coefficient of $d\bar{z}_Q$ of $\omega_Q$ equals $-\bar{\partial}_\Lambda K_Q(Q) = -\pi /\left(2 \textnormal{area}(z_Q(F_Q))\right)$.
\end{remark}

\begin{lemma}\label{lem:property_second}
Let $Q\neq Q' \in F(\Lambda)$ and let $\omega_Q,\omega_{Q'}$ be the discrete Abelian differentials of the second kind corresponding to the charts $z_Q,z_{Q'}$. Define complex numbers $\alpha, \beta$ in such a way that $\omega_Q=\alpha dz_{Q'}$ on the four edges of $F_{Q'}$ and $\omega_{Q'}=\beta dz_{Q}$ on the four edges of $F_{Q}$. Then, $\alpha=\beta$.
\end{lemma}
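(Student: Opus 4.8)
The plan is to apply the discrete Riemann Bilinear Identity~\ref{th:RBI} to the pair $\omega_Q,\omega_{Q'}$ and then to compute the resulting discrete wedge-product integral directly in local charts. The crucial preliminary observation is that, although $\omega_Q$ and $\omega_{Q'}$ each carry a double pole, they are nevertheless \emph{closed} discrete differentials of type $\Diamond$. Indeed, being of the second kind their residues vanish, so $d\omega_Q|_{F_v}=0$ at every vertex face $F_v$; and the discrete integral of any type-$\Diamond$ differential around a face $F_Q$ is always zero, so by discrete Stokes' Theorem~\ref{th:stokes} one gets $d\omega_Q|_{F_Q}=\oint_{\partial F_Q}\omega_Q=0$ as well. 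The same holds for $\omega_{Q'}$. Hence the hypotheses of Theorem~\ref{th:RBI} are met.

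First I would read off the right-hand side of Theorem~\ref{th:RBI}. By Corollary~\ref{cor:existence_second} all black and white $a$-periods of both $\omega_Q$ and $\omega_{Q'}$ vanish, so every summand $A_k^B{B'_k}^W$, $B_k^B{A'_k}^W$, $A_k^W{B'_k}^B$, $B_k^W{A'_k}^B$ appearing in the identity contains a vanishing $a$-period factor. Therefore $\iint_{F(X)}\omega_Q\wedge\omega_{Q'}=0$.

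Next I would evaluate the left-hand side directly. The discrete wedge product of two type-$\Diamond$ forms is a two-form of type $\Diamond$, supported on the faces $F_{Q''}$ of $X$ corresponding to quadrilaterals, and on such a face it equals $(pq'-qp')\,\Omega_\Diamond^{z_{Q''}}$, where $\omega_Q=p\,dz_{Q''}+q\,d\bar z_{Q''}$ and $\omega_{Q'}=p'\,dz_{Q''}+q'\,d\bar z_{Q''}$. Away from $Q$ and $Q'$ both forms are of the shape $p\,dz_{Q''}$, i.e. $q=q'=0$, so those faces contribute nothing; only $F_Q$ and $F_{Q'}$ survive. On $F_Q$ the chart $z_Q$ gives $q=-\pi/(2\,\textnormal{area}(z_Q(F_Q)))$ for $\omega_Q$ and $q'=0$, $p'=\beta$ for $\omega_{Q'}$, so with $\iint_{F_Q}\Omega_\Diamond^{z_Q}=-4i\,\textnormal{area}(z_Q(F_Q))$ the contribution is $(pq'-qp')\iint_{F_Q}\Omega_\Diamond^{z_Q}=-2\pi i\beta$; symmetrically, reading $\omega_Q=\alpha\,dz_{Q'}$ and $\omega_{Q'}=p'\,dz_{Q'}-\pi/(2\,\textnormal{area}(z_{Q'}(F_{Q'})))\,d\bar z_{Q'}$ on $F_{Q'}$ yields the contribution $+2\pi i\alpha$. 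Combining, $0=\iint_{F(X)}\omega_Q\wedge\omega_{Q'}=2\pi i(\alpha-\beta)$, whence $\alpha=\beta$.

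The only genuine obstacle is the closedness observation of the first paragraph, which is what licenses the use of the bilinear identity in the presence of double poles; once that is in place, the argument is a bookkeeping exercise in the local-coordinate formula for the wedge product. The clean point worth highlighting is that the $dz$-coefficient of each differential \emph{at its own double pole} never enters the relevant combination $pq'-qp'$, so only the distinguished $d\bar z$-coefficients $-\pi/(2\,\textnormal{area})$ and the foreign $dz$-coefficients $\alpha,\beta$ matter, giving exactly the reciprocity $\alpha=\beta$.
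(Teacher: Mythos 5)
Your proof is correct and takes essentially the same route as the paper's: both apply the discrete Riemann Bilinear Identity~\ref{th:RBI} to $\omega_Q,\omega_{Q'}$, whose black and white $a$-periods vanish, to conclude $\iint_{F(X)}\omega_Q\wedge\omega_{Q'}=0$, and then evaluate the wedge product in local charts, where only $F_Q$ and $F_{Q'}$ contribute and yield $2\pi i(\alpha-\beta)=0$. Your explicit verification of closedness in the presence of the double poles (via vanishing residues at vertex faces and the automatic vanishing of $\oint_{\partial F_Q}\omega$ for type-$\Diamond$ forms) is a detail the paper leaves implicit with ``by definition,'' but otherwise the arguments and the sign bookkeeping coincide.
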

\begin{proof}
By definition, $\omega_Q$ and $\omega_{Q'}$ are closed discrete differentials whose black and white $a$-periods vanish. So by the discrete Riemann Bilinear Identity~\ref{th:RBI}, $\iint_{F(X)} \omega_Q \wedge \omega_{Q'}=0$. Since $\omega_Q$ and $\omega_{Q'}$ have no pole at a face of $X$ corresponding to a quadrilateral $Q''\neq Q,Q'$, $\left(\omega_Q \wedge \omega_{Q'}\right)|_{F_{Q''}}=0$. Hence,
\begin{align*}
0&=\iint_{F(X)}\omega_Q \wedge \omega_{Q'}=-\iint_{F_Q}\omega_{Q'} \wedge \omega_Q+\iint_{F_{Q'}}\omega_Q \wedge \omega_{Q'}\\
&=\frac{\beta\pi}{2\textnormal{area}(z_Q(F_Q))}\iint_{F_Q}dz_Q\wedge d\bar{z}_Q-\frac{\alpha\pi}{2\textnormal{area}(z_{Q'}(F_{Q'}))}\iint_{F_{Q'}}dz_{Q'}\wedge d\bar{z}_{Q'}=-2\pi i (\beta-\alpha). \qedhere\end{align*}
\end{proof}

\begin{proposition}\label{prop:periods_second}
Let $Q \in F(\Lambda)$ and let $\omega_Q$ be the discrete Abelian differential of the second kind corresponding to the chart $z_Q$. Suppose that $\omega_k|_{\partial F_Q}=\alpha_k dz_Q$ for $k=1,\ldots,g$. Then, $\int_{b_k} \omega_Q=2\pi i \alpha_k$.
\end{proposition}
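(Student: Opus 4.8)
The plan is to apply the discrete Riemann Bilinear Identity~\ref{th:RBI} to the pair $\omega_k,\omega_Q$ and then evaluate the resulting discrete wedge product by a purely local computation. Both differentials are closed: $\omega_k$ is discrete holomorphic and hence closed, while $\omega_Q$ is a discrete Abelian differential of the second kind, so all its residues vanish, i.e.\ $\oint_{\partial F_v}\omega_Q=0$ for every $v\in V(\Lambda)$. Since $\omega_Q$ is of type $\Diamond$, we also have $\oint_{\partial F_Q}\omega_Q=0$ for every $Q\in V(\Diamond)$, so discrete Stokes' Theorem~\ref{th:stokes} gives $d\omega_Q=0$ on all faces of $X$. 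Thus Theorem~\ref{th:RBI} is applicable to $\omega_k$ and $\omega_Q$.

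Next I would insert the known periods. The canonical set satisfies $A_l^B=A_l^W=\delta_{lk}$ for $\omega_k$, whereas all black and white $a$-periods of $\omega_Q$ vanish by Corollary~\ref{cor:existence_second}. Writing the black and white $b_l$-periods of $\omega_Q$ as ${B'_l}^B,{B'_l}^W$, every term in the bilinear identity containing an $a$-period of $\omega_Q$ drops out, and the Kronecker deltas collapse the remaining sums. Using $2\int_{b_k}=B^B+B^W$, this yields
\[\iint_{F(X)}\omega_k\wedge\omega_Q=\frac{1}{2}\left({B'_k}^W+{B'_k}^B\right)=\int_{b_k}\omega_Q,\]
so the claim reduces to showing that the left-hand side equals $2\pi i\,\alpha_k$.

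The final step is the local evaluation of the discrete wedge product. Since $\omega_k$ is discrete holomorphic, it has no $d\bar z$-component in any chart by Lemma~\ref{lem:Hodge_projection}, and $\omega_Q$ is of the form $p\,dz_{Q'}$ on every face $F_{Q'}$ with $Q'\neq Q$; hence $\omega_k\wedge\omega_Q$ vanishes on all such faces, and it vanishes on faces $F_v$ because both forms are of type $\Diamond$. On $F_Q$ I would write $\omega_k|_{\partial F_Q}=\alpha_k\,dz_Q$ and, by Corollary~\ref{cor:existence_second}, $\omega_Q|_{\partial F_Q}=p\,dz_Q-\tfrac{\pi}{2\,\textnormal{area}(z_Q(F_Q))}\,d\bar z_Q$, so the defining formula for the discrete wedge product gives
\[\left(\omega_k\wedge\omega_Q\right)\big|_{F_Q}=-\frac{\alpha_k\pi}{2\,\textnormal{area}(z_Q(F_Q))}\,\Omega_\Diamond^{z_Q}.\]
Substituting $\iint_{F_Q}\Omega_\Diamond^{z_Q}=-4i\,\textnormal{area}(z_Q(F_Q))$ cancels the area factor and produces $\iint_{F(X)}\omega_k\wedge\omega_Q=2\pi i\,\alpha_k$, completing the argument.

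There is no genuine obstacle here; the one point requiring care is the justification that $\omega_Q$ is closed despite having a double pole at $Q$, which is what legitimizes the use of the bilinear identity. Everything else is bookkeeping of the black and white periods together with a single-cell evaluation of the wedge product.
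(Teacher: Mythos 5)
Your proposal is correct and follows essentially the same route as the paper: apply the discrete Riemann Bilinear Identity to $\omega_k$ and $\omega_Q$ (legitimate since $\omega_Q$, being of the second kind and of type $\Diamond$, is closed), use the normalization $A_l^B=A_l^W=\delta_{lk}$ together with the vanishing black and white $a$-periods of $\omega_Q$ to reduce the identity to $\int_{b_k}\omega_Q$, and evaluate $\omega_k\wedge\omega_Q$ locally on the single face $F_Q$ via $\iint_{F_Q}\Omega_\Diamond^{z_Q}=-4i\,\textnormal{area}(z_Q(F_Q))$. Your explicit justification of the closedness of $\omega_Q$ is a point the paper leaves implicit, but it is the same argument in substance.
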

\begin{proof}
In a chart $z_{Q'}$ of a face $Q' \neq Q$, $\omega_k$ and $\omega_Q$ are both of the form $pdz_{Q'}$, so $\omega_k \wedge \omega_Q$ vanishes at $F_{Q'}$. It follows from the discrete Riemann Bilinear Identity~\ref{th:RBI} applied to $\omega_k$ and $\omega_Q$ that \[\int_{b_k} \omega_Q=\int_{W b_k} \omega_Q+\int_{B b_k} \omega_Q=\iint_{F(X)} \omega_k \wedge \omega_Q=\frac{-\alpha_k\pi}{2\textnormal{area}(z_Q(F_Q))}\iint_{F_Q}  dz_Q \wedge d\bar{z}_Q=2\pi i \alpha_k\] since black and white $a$-periods of $\omega_Q$ vanish.
\end{proof}

Since discrete Abelian differentials of the third kind have residues, periods are not well-defined. However, periods of the discrete Abelian differentials constructed in Proposition~\ref{prop:existence_third} are defined modulo $2\pi i$. To normalize them, we think of $a_k,b_k$ as given closed curves $\alpha'_k,\beta'_k$ on $X$.

\begin{definition}
Let $\alpha'_k,\beta'_k$, $1\leq k \leq g$, be cycles on $X$ in the homotopy classes $a_k,b_k$. Let $v,v' \in V(\Gamma)$ or $v,v' \in V(\Gamma^*)$. Then, $\omega_{vv'}$ denotes the unique discrete Abelian differential whose integrals along $\alpha'_k,\beta'_k$ are zero, whose nonzero residues are given by $\textnormal{res}_\omega(v)=-\textnormal{res}_\omega(v')=1$, and that has no further poles.
\end{definition}

\begin{definition}
Let $R$ be an oriented path on $\Gamma$ or $\Gamma^*$ and $\omega$ a discrete Abelian differential. To each oriented edge in $R$ we choose one of the corresponding parallel edges of $X$ and orient it the same. By $R_X$, we denote the resulting one-chain on $X$. Then, $\int_R \omega:=2\int_{R_X}\omega$.
\end{definition}

\begin{proposition}\label{prop:periods_third}
Let $v,v' \in V(\Gamma)$ or $v,v' \in V(\Gamma^*)$. Suppose that the cycles $\alpha'_k,\beta'_k$ on $X$ are homotopic to closed paths $\alpha_k,\beta_k$ on $\Sigma$ cutting out a fundamental polygon with $4g$ vertices on the surface $\Sigma \backslash \{v,v'\}$. In addition, let $R$ be an oriented path on $\Gamma$ or $\Gamma^*$ from $v'$ to $v$ that does not intersect any of the curves $\alpha_1,\ldots,\alpha_g,\beta_1,\ldots,\beta_g$. Then, \[\int_{b_k}\omega_{vv'}=2\pi i \int_R \omega_k.\]
\end{proposition}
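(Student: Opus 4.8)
The plan is to prove this as a discrete analogue of the classical reciprocity law between Abelian differentials of the first and third kind, reusing the machinery behind the discrete Riemann Bilinear Identity~\ref{th:RBI} but bookkeeping the residues of $\omega_{vv'}$. Abbreviate $\omega:=\omega_{vv'}$ and let $\omega_k$ be the corresponding member of the canonical set, so that its black and white $a_j$-periods both equal $\delta_{jk}$. By Lemma~\ref{lem:multivalued} there is a multi-valued primitive $f_k\colon V(\tilde{\Lambda})\to\mC$ with $df_k=\omega_k$ whose black and white periods are those of $\omega_k$.

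First I would localize the non-closedness of $\omega$. Every discrete one-form of type $\Diamond$ has $\oint_{\partial F_Q}\omega=0$, hence $d\omega|_{F_Q}=0$ automatically, while $d\omega|_{F_w}=\oint_{\partial F_w}\omega=2\pi i\,\textnormal{res}_w(\omega)$ vanishes at every vertex $w\neq v,v'$. Thus Theorem~\ref{th:derivation} gives $d(f_k\omega)=df_k\wedge\omega+f_k\,d\omega=\omega_k\wedge\omega$ off $F_v,F_{v'}$; since $\omega_k$ and $\omega$ are both of the form $p\,dz_Q$ in every chart, $\omega_k\wedge\omega\equiv 0$, so $d(f_k\omega)$ is supported on the two vertex faces $F_v,F_{v'}$, where discrete Stokes' Theorem~\ref{th:stokes} yields $\iint_{F_v}d(f_k\omega)=f_k(v)\oint_{\partial F_v}\omega=2\pi i\,f_k(v)$ and $-2\pi i\,f_k(v')$ at $F_{v'}$.

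Next I would lift everything to a fundamental $4g$-gon $\hat{F}$ cut out by $\alpha_j,\beta_j$; by hypothesis these curves avoid $v$ and $v'$, so both poles are interior to $\hat{F}$, which carries one lift $\tilde{v},\tilde{v}'$ of each. Summing over the faces of $\hat{F}$ and applying Stokes once more gives, on one side,
\[ \oint_{\partial \hat{F}} f_k \omega = \iint_{\hat{F}} d(f_k \omega) = 2\pi i\bigl(f_k(\tilde{v}) - f_k(\tilde{v}')\bigr). \]
As $R$ meets none of the $\alpha_j,\beta_j$, it lifts to a path $\tilde{R}$ from $\tilde{v}'$ to $\tilde{v}$ inside $\hat{F}$; telescoping $df_k$ along $\tilde{R}$ and using $\int_e df_k=\tfrac12(f_k(v'_+)-f_k(v'_-))$ from Theorem~\ref{th:stokes} identifies $f_k(\tilde{v})-f_k(\tilde{v}')$ with $\int_R\omega_k$, the factor two being precisely the one in the definition of $\int_R$, so this side equals $2\pi i\int_R\omega_k$. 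On the other side I would evaluate $\oint_{\partial\hat{F}}f_k\omega$ by the boundary-pairing computation from the proof of Theorem~\ref{th:RBI}: the paired sides $\tilde{\alpha}_j,\tilde{\alpha}_j^{-1}$ contribute the black and white $a$-periods of $\omega$ (weighted by the $b$-periods of $\omega_k$), and the pairs $\tilde{\beta}_j,\tilde{\beta}_j^{-1}$ contribute the black and white $b$-periods of $\omega$ (weighted by the $a$-periods of $\omega_k$). Since the black and white $a$-periods of $\omega=\omega_{vv'}$ vanish by its normalization the $\tilde{\alpha}$-terms drop out, and since the black and white $a_j$-periods of $\omega_k$ equal $\delta_{jk}$ the $\tilde{\beta}$-terms collapse to $\tfrac12({B_k}^B+{B_k}^W)=\int_{b_k}\omega$. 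Comparing the two evaluations yields $\int_{b_k}\omega_{vv'}=2\pi i\int_R\omega_k$.

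The main obstacle is the same technical step as in Theorem~\ref{th:RBI}: after a sufficiently fine subdivision of the faces $F_v$ and $F_Q$ one must realize the cycles $\alpha_j,\beta_j$ along edges of a refined medial graph so that they cut out a genuine cellular fundamental $4g$-gon, while simultaneously keeping $v,v'$ strictly interior and $R$ disjoint from the cuts, and one must verify that this refinement leaves the relevant periods of $\omega$ and the values of $f_k$ unchanged. Everything else is a transcription of the bilinear-identity argument with the residue contributions at $F_v$ and $F_{v'}$ restored.
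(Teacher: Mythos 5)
Your proposal follows the paper's own proof essentially verbatim: both arguments use $\omega_k\wedge\omega_{vv'}=0$ together with the derivation property of Theorem~\ref{th:derivation} and discrete Stokes' Theorem~\ref{th:stokes} to show that $d(f_k\omega_{vv'})$ is supported on $F_v,F_{v'}$, so that integrating over a lifted fundamental domain gives $2\pi i\left(f_k(\tilde{v})-f_k(\tilde{v}')\right)=2\pi i\int_R\omega_k$, and then both evaluate the same boundary integral by the side-pairing computation from the discrete Riemann Bilinear Identity~\ref{th:RBI}, with the identical technical caveat about subdividing $F_v,F_Q$ while keeping the residues concentrated and the cuts disjoint from $v,v',R$. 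Even your two delicate points --- the vanishing of the $\tilde{\alpha}$-pair contributions via the ($a$-period) normalization of $\omega_{vv'}$ and the refinement bookkeeping --- are exactly the observations the paper makes, so this is the same proof, correctly executed.
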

\begin{proof}
On the one hand, $\omega_k \wedge \omega_{vv'}=0$ since both discrete Abelian differentials are of the form $pdz_Q$ in any chart $z_Q$. On the other hand, we can find a discrete holomorphic multi-valued function $f:V(\tilde{\Lambda})\to \mC$ such that $df=\omega_k$ by Lemma~\ref{lem:multivalued}. Since $d$ is a derivation by Theorem~\ref{th:derivation}, \[0=\omega_k \wedge \omega_{vv'}= d(f\omega_{vv'})-fd\omega_{vv'}\] is true if $\omega_k,\omega_{vv'}$ are lifted to $\tilde{X}$. Now, choose a collection $\tilde{F}(X)$ of lifts of all faces of $X$ to $\tilde{X}$ such that the corresponding lifts $\tilde{v}$ and $\tilde{v}'$ of $v$ and $v'$ are connected by a lift of $R$ in $\tilde{\Gamma}$ or $\tilde{\Gamma}^*$. It is not necessary that all faces of $\tilde{X}$ intersecting the lift of $R$ are contained in $\tilde{F}(X)$. Due to discrete Stokes' Theorem~\ref{th:stokes}, $d\omega_{vv'}=0$ on all lifts of faces $F_Q$ corresponding to $Q\in V(\Diamond)$ or faces $F_{v''}$ corresponding to a vertex $v''\neq v,v'$. Using $\textnormal{res}_\omega(v)=-\textnormal{res}_\omega(v')=1$, discrete Stokes' Theorem~\ref{th:stokes} gives \[\int_{\partial \tilde{F}(X)} f\omega_{vv'}=\iint_{\tilde{F}(X)} d(f\omega_{vv'})=\iint_{\tilde{F}(X)} fd\omega_{vv'}=2\pi i\left(f(\tilde{v})-f(\tilde{v}')\right)=2\pi i\int_{R} \omega_k.\]

The left hand side can be calculated in exactly the same way as in the proof of the discrete Riemann Bilinear Identity~\ref{th:RBI}. The only essential difference is that when we extend $\omega_{vv'}$ to a subdivision of lifts $\tilde{F}_{v}$ or $\tilde{F}_{v'}$, the extended one-form shall have zero residues at all new faces but one containing $v$ or $v'$, where it should remain $1$ or $-1$. As a result, we obtain $\int_{b_k}\omega_{vv'}$, observing that almost all black and white $a$-periods of $f$ and $\omega_{vv'}$ vanish.
\end{proof}

\begin{remark}
Results analogous to Propositions~\ref{prop:periods_second} and~\ref{prop:periods_third} are true for the black and white $b$-periods of $\omega_Q$ and $\omega_{vv'}$, replacing $\omega_k$ by $\omega_k^B$ or $\omega_k^W$.
\end{remark}

\begin{proposition}\label{prop:basis}
Let a chart $z_Q$ to each $Q \in F(\Lambda)$ be given. Fix $b \in V(\Gamma)$ and $w \in V(\Gamma^*)$. Then, the normalized discrete Abelian differentials of the first kind $\omega_k^B$ and $\omega_k^W$, $k=1,\ldots,g$, of the second kind $\omega_Q$, $Q \in F(\Lambda)$, and of the third kind $\omega_{bb'}$ and $\omega_{ww'}$, $b'\neq b$ being black and $w'\neq w$ being white vertices, form a basis of the space of discrete Abelian differentials.
\end{proposition}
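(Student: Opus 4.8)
The plan is to argue by a dimension count combined with linear independence, since a linearly independent family whose cardinality equals the dimension of the ambient space is automatically a basis. First I would determine the dimension of the space $\mathcal{A}$ of all discrete Abelian differentials, i.e.\ of discrete differentials of type $\Diamond$. In a chart $z_Q$ of a quadrilateral $Q\in F(\Lambda)$ every such $\omega$ is represented uniquely as $\omega|_{\partial F_Q}=p\,dz_Q+q\,d\bar{z}_Q$, and since each oriented edge of $X$ lies in the boundary of exactly one face $F_Q$, the pair $(p,q)\in\mC^2$ may be prescribed independently for each $Q$. Hence $\dim_{\mC}\mathcal{A}=2|F(\Lambda)|=2|V(\Diamond)|$.

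Next I would count the proposed generators: $2g$ differentials of the first kind $\omega_k^B,\omega_k^W$; one differential of the second kind $\omega_Q$ for each of the $|F(\Lambda)|$ quadrilaterals; and $(|V(\Gamma)|-1)+(|V(\Gamma^*)|-1)=|V(\Lambda)|-2$ differentials of the third kind. Because $\Lambda$ is a quad-decomposition, $|E(\Lambda)|=2|F(\Lambda)|$, so Euler's formula gives $|V(\Lambda)|-|V(\Diamond)|=2-2g$, that is $|V(\Lambda)|=|F(\Lambda)|+2-2g$. Adding the three contributions yields $2g+|F(\Lambda)|+(|V(\Lambda)|-2)=2|F(\Lambda)|=\dim_{\mC}\mathcal{A}$. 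Therefore it suffices to prove that the listed differentials are linearly independent.

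For linear independence I would eliminate the three families in turn, exploiting that each is detected by a feature invisible to the others. Suppose a vanishing linear combination is given. The first-kind differentials are discrete holomorphic and the second-kind differentials have vanishing residues by construction (Corollary~\ref{cor:existence_second}), so by the discrete residue theorem (Proposition~\ref{prop:residue}) all residues of the combination come from the third-kind terms. Since for distinct black endpoints $b'$ the differential $\omega_{bb'}$ has residue $-1$ at $b'$, residue $0$ at every other black vertex different from $b$, and no residue at white vertices, evaluating the residue of the combination at each black vertex different from $b$ forces the corresponding coefficient to vanish; the symmetric argument at white vertices kills the white third-kind coefficients. With the third-kind terms gone, I would read off the coefficient of $d\bar{z}_Q$ in a chart $z_Q$ of a fixed face $Q$: the first-kind differentials contribute nothing (they are of the form $p\,dz_Q$), the second-kind differentials $\omega_{Q'}$ with $Q'\neq Q$ also contribute nothing (they have no double pole at $Q$), while $\omega_Q$ contributes $-\pi/(2\,\textnormal{area}(z_Q(F_Q)))$ times its coefficient. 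Hence that coefficient vanishes, and as $Q$ was arbitrary all second-kind coefficients vanish. What remains is a combination of the $\omega_k^B,\omega_k^W$ alone; taking its black and white $a$-periods and using that $\omega_k^B$ (respectively $\omega_k^W$) carries black (respectively white) $a_j$-period $\delta_{jk}$ and vanishing white (respectively black) $a$-periods shows the last coefficients are zero as well.

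The only genuinely delicate point is the bookkeeping that makes the three families decouple in this lower-triangular fashion: one must check that residues isolate the third kind, that the $d\bar{z}_Q$-coefficient isolates the second kind, and that the black and white $a$-periods isolate the first kind. Each of these is precisely the defining normalization of the corresponding family (Proposition~\ref{prop:existence_third}, Corollary~\ref{cor:existence_second}, and the canonical basis underlying Corollary~\ref{cor:dimension} and Theorem~\ref{th:holomorphic_existence}). Together with the dimension count this shows the $2|F(\Lambda)|$ differentials form a basis of the space of discrete Abelian differentials.
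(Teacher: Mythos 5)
Your proof is correct, but it runs in the opposite direction from the paper's. The paper dismisses linear independence as clear and instead proves spanning constructively: given an arbitrary discrete Abelian differential, it first subtracts suitable multiples of the $\omega_Q$ to remove all double poles, then---since the remaining residues sum to zero over $V(\Gamma)$ and over $V(\Gamma^*)$ separately by Proposition~\ref{prop:residue}---subtracts a combination of the $\omega_{bb'}$ and $\omega_{ww'}$ matching those residues, and finally absorbs the leftover discrete holomorphic differential into the span of the $\omega_k^B,\omega_k^W$, which form a basis of $\mathcal{H}$ by Theorem~\ref{th:holomorphic_existence}. You instead prove independence in detail and obtain spanning for free from the dimension count $\dim = 2|F(\Lambda)|$. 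Your count is sound: the type-$\Diamond$ condition is local to each face $F_Q$, leaving exactly two free parameters $(p,q)$ in the unique representation $\omega|_{\partial F_Q}=p\,dz_Q+q\,d\bar{z}_Q$ per quadrilateral, and your bookkeeping $2g+|F(\Lambda)|+\bigl(|V(\Lambda)|-2\bigr)=2|F(\Lambda)|$ reuses the identity $|V(\Lambda)|-2=|F(\Lambda)|-2g$ that the paper itself establishes inside the proof of Proposition~\ref{prop:existence_third}. Your lower-triangular elimination (residues kill the third kind, the $d\bar{z}_Q$-coefficient kills the second kind, black and white $a$-periods kill the first kind) is exactly the decoupling the paper's ``linear independence is clear'' glosses over, and the uniqueness of the local representation guarantees that reading off the $d\bar{z}_Q$-coefficient is well defined. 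What each approach buys: the paper's reduction is shorter and exhibits explicitly how to expand a given differential in the proposed basis, while yours supplies the independence verification the paper omits and replaces the reduction by a one-line Euler-characteristic computation. One cosmetic remark: your appeal to the discrete residue theorem in the independence step is superfluous---pointwise evaluation of residues at each black vertex $b'\neq b$ and each white vertex $w'\neq w$ already forces the third-kind coefficients to vanish, since the first- and second-kind differentials carry no residues at all.
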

\begin{proof}
Linear independence is clear. Given any discrete Abelian differential, we can first use the $\omega_Q$ to eliminate all double poles. For the resulting discrete Abelian differential we can find linear combinations of $\omega_{bb'}$ and  $\omega_{ww'}$ that have the same residues at black and white vertices, respectively. We end up with a discrete holomorphic differential that can be represented by a linear combination of the $2g$ discrete differentials $\omega_k^B$ and $\omega_k^W$.
\end{proof}


\subsection{Divisors and the discrete Riemann-Roch Theorem}\label{sec:RR}

We generalize the notion of divisors on a compact discrete Riemann surface $(\Sigma,\Lambda,z)$ of genus $g$ and the discrete Riemann-Roch theorem given in \cite{BoSk12} to general quad-decompositions. In addition, we define double poles of discrete Abelian differentials and double values of functions $f:V(\Lambda)\to\mC$.

\begin{definition}
A \textit{divisor} $D$ is a formal linear combination \[D=\sum_{j=1}^M m_j v_j + \sum_{k=1}^N n_k Q_k,\] where $m_j\in \left\{-1,0,1\right\}$, $v_j \in V(\Lambda)$, $n_k \in \left\{-2,-1,0,1,2\right\}$, and $Q_k \in V(\Diamond)$.

$D$ is \textit{admissible} if even $m_j\in \left\{-1,0\right\}$ and $n_k \in \left\{-2,0,1\right\}$. Its \textit{degree} is defined as \[\deg D:=\sum_{j=1}^M m_j + \sum_{k=1}^N \textnormal{sign}(n_k).\] $D\geq D'$ if the formal sum $D-D'$ is a divisor whose coefficients are all nonnegative.
\end{definition}
\begin{remark}
Note that double points just count once in the degree. The reason is that these points correspond to double values and not to double zeroes of a discrete meromorphic function. Concerning discrete Abelian differentials, a double pole does not include a simple pole and therefore counts once.
\end{remark}

As noted in \cite{BoSk12}, divisors on a discrete Riemann surface do not form an Abelian group. One of the reasons is that the pointwise product of discrete holomorphic functions does not need to be discrete holomorphic itself, another one is the asymmetry of point spaces. Whereas discrete meromorphic functions will be defined on $V(\Lambda)$, discrete Abelian differentials are essentially defined by complex functions on $V(\Diamond)$, supposed that a chart for each quadrilateral is fixed.

\begin{definition}
Let $f:V(\Lambda) \to \mC$, $v \in V(\Lambda)$, and $Q \in F(\Lambda)\cong V(\Diamond)$. $f$ is called \textit{discrete meromorphic}.
\begin{itemize}
 \item $f$ has a \textit{zero} at $v$ if $f(v)=0$.
 \item $f$ has a \textit{simple pole} at $Q$ if $df$ has a double pole at $Q$.
 \item $f$ has a \textit{double value} at $Q$ if $df|_{\partial F_Q}=0$.
\end{itemize}

If $f$ has zeroes $v_1,\ldots,v_M \in V(\Lambda)$, double values $Q_1,\ldots,Q_N \in V(\Diamond)$, and poles $Q'_1,\ldots,Q'_{N'} \in V(\Diamond)$, then its \textit{divisor} is defined as \[(f):=\sum_{j=1}^M v_j + \sum_{k=1}^N 2Q_k - \sum_{k'=1}^{N'} Q'_{k'}.\] 
\end{definition}

\begin{remark}
Note that in the smooth setting, a double value of a smooth function $f$ is a point where $f-c$ has a double zero for some constant $c$. In the discrete setup, a double value at a quadrilateral $Q$ implies that the values of the discrete function $f$ at both black vertices coincide as well as at the two white vertices of $Q$. In this sense, double values are separated from the points where the function is evaluated.
\end{remark}

\begin{definition}
Let $\omega$ be a discrete Abelian differential. If $\omega$ has zeroes $Q_1,\ldots,Q_N \in V(\Diamond)$, double poles at $Q'_1,\ldots,Q'_{N'} \in V(\Diamond)$, and simple poles at $v_1,\ldots,v_M \in V(\Lambda)$, then its \textit{divisor} is defined as \[(\omega):=\sum_{k=1}^N Q_k - \sum_{k'=1}^{N'} 2Q'_{k'}-\sum_{j=1}^M v_j.\]
\end{definition}

\begin{remark}
In the linear theory of discrete Riemann surfaces the (pointwise) product of discrete holomorphic functions is not discrete holomorphic function in general. That is also the reason why we cannot give a local definition of poles and zeroes of higher order. However, in Section~\ref{sec:branch} we merged several branch points to define one branch point of higher order. In a slightly different way, we can consider a finite subgraph $\Diamond_0\subseteq\Diamond$ that forms a simply-connected closed region consisting of $F$ quadrilaterals, where each quadrilateral is a double value of the discrete meromorphic function $f$, as one multiple value of order $F+1$. Then, $f$ takes the same value at all black vertices of $\Diamond_0$ and at all white vertices of $\Diamond_0$. If $\Diamond_0$ contains no interior vertex, both the numbers of black and of white vertices equal $F+1$, and if in addition $f$ equals zero at each black vertex, then we can interpret the $F+1$ black vertices of $\Diamond_0$ as a zero of order $F+1$.

In a similar way, double poles of discrete Abelian differentials can be merged to a pole of higher order. Unfortunately, we do not see a way how higher order poles of discrete meromorphic functions or multiple zeroes of discrete Abelian differentials can be defined.
\end{remark}

\begin{definition}
Let $D$ be a divisor. By $L(D)$ we denote the complex vector space of discrete meromorphic functions $f$ that vanish identically or whose divisor satisfies $(f)\geq D$. Similarly, $H(D)$ denotes the complex vector space of discrete Abelian differentials $\omega$ such that $\omega \equiv 0$ or $(\omega)\geq D$. The dimensions of these spaces are denoted by $l(D)$ and $i(D)$, respectively.
\end{definition}

We are now able to formulate and prove the following \textit{discrete Riemann-Roch theorem}.

\begin{theorem}\label{th:Riemann_Roch}
If $D$ is an admissible divisor on a compact discrete Riemann surface of genus $g$, then \[l(-D)=\deg D-2g+2+i(D).\]
\end{theorem}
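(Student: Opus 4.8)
The plan is to follow the classical proof via Abelian differentials, translating it through the dictionary supplied by Section~\ref{sec:RR}: zeros and simple poles of a function live on $V(\Lambda)$, while its simple poles and double values live on $V(\Diamond)$, and dually for differentials. Concretely, I would compute both $l(-D)$ and $i(D)$ as dimensions of explicit spaces of differentials and show that the ``condition matrices'' governing the two are mutually transpose under the discrete Riemann Bilinear Identity~\ref{th:RBI}, so that their ranks cancel.

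First I would reduce the function side to the differential side via $f\mapsto df$. If $f\in L(-D)$, then $\omega:=df$ is exact, hence closed with vanishing residues (by $ddf=0$ and discrete Stokes' Theorem~\ref{th:stokes}) and vanishing black and white periods; conversely, a closed type-$\Diamond$ differential all of whose $4g$ black and white periods vanish has a single-valued primitive on $V(\Lambda)$ by Lemma~\ref{lem:multivalued} and Proposition~\ref{prop:primitive2}. Under this correspondence the conditions cutting out $L(-D)$ become: $\omega$ has double poles only at the quadrilaterals with $n_k=1$, is forced to vanish on those with $n_k=-2$, and the primitive vanishes at the vertices with $m_j=-1$. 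The kernel of $d$ contributes the biconstant functions compatible with $D$, which together with discrete Liouville's Theorem~\ref{th:Liouville} accounts for the additive constant $2$ in the formula.

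Next I would build the candidate spaces using the basis of Proposition~\ref{prop:basis}: the second-kind differentials $\omega_Q$ of Corollary~\ref{cor:existence_second} to prescribe double poles, the third-kind differentials of Proposition~\ref{prop:existence_third} to prescribe residues at the $m_j=-1$ vertices subject to the discrete residue theorem~\ref{prop:residue}, and the $2g$-dimensional space $\mathcal H$ (Corollary~\ref{cor:dimension}) of holomorphic differentials. Because the $\omega_Q$ have vanishing black and white $a$-periods, imposing exactness first kills the holomorphic part by Corollary~\ref{cor:periods_vanish} and then reduces to the vanishing of the black and white $b$-periods of $\sum c_Q\omega_Q$. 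The decisive input is Proposition~\ref{prop:periods_second} (and its black/white refinement), which evaluates these $b$-periods as $2\pi i$ times the $dz_Q$-coefficients $\alpha_k^{B},\alpha_k^{W}$ of the canonical holomorphic differentials at $Q$; the analogous role for the double-value and zero constraints is played by Lemma~\ref{lem:property_second} and Proposition~\ref{prop:periods_third}. This produces one matrix $M$ whose null space is the image of $L(-D)$ under $d$, while the defining conditions of $H(D)$ — that a (suitably singular) differential vanish at the quadrilaterals with $n_k=1$ — are exactly $\ker(M^{T})$. A rank count then gives $l(-D)-i(D)=\deg D-2g+2$, with $\deg D$ appearing as the number of allowed poles minus forced zeros and double values, and the $-2g$ cancelling the generic dimension of $\mathcal H$.

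The main obstacle is precisely the verification that these two condition-matrices are transposes: this is the discrete incarnation of Serre duality, and it rests on the reciprocity relations $\int_{b_k}\omega_Q=2\pi i\alpha_k$ (Proposition~\ref{prop:periods_second}), $\alpha=\beta$ (Lemma~\ref{lem:property_second}), and the $b$-period formula for third-kind differentials (Proposition~\ref{prop:periods_third}), all of which descend from the bilinear identity~\ref{th:RBI}. The genuinely delicate bookkeeping is the asymmetry of the theory: conditions split into black and white parts, so that single-valuedness is $4g$ rather than $2g$ equations and $\dim\mathcal H=2g$, and the two colours of forced-zero vertices interact with the two biconstant degrees of freedom. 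I expect that organizing $D$ into its vertex part $\{m_j=-1\}$ and its three quadrilateral types $\{n_k\in\{-2,0,1\}\}$ and checking the duality type by type is where the care is needed; the sanity check $D=0$, giving $l(0)=2$ and $i(0)=\dim\mathcal H=2g$ so that $2=0-2g+2+2g$, confirms the constants.
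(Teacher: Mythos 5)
Your proposal follows the paper's proof essentially step for step: the reduction $f\mapsto df$ with the biconstant kernel supplying the constant $2$, the expansion $df=\sum_{Q\in V_0}f_Q\,\omega_Q$ in the normalized basis of Proposition~\ref{prop:basis} (the holomorphic part killed by the vanishing black and white $a$-periods via Corollary~\ref{cor:periods_vanish}), the reciprocity relations of Proposition~\ref{prop:periods_second} and Lemma~\ref{lem:property_second} assembling a single matrix $M$ with the image of $d$ isomorphic to one kernel and $H(D)$ equal to the kernel of the transpose, and the concluding rank count with the case analysis on whether the forced-zero sets $B_0,W_0$ are empty. The only cosmetic deviation is that you invoke Proposition~\ref{prop:periods_third} for the forced-zero constraints $f(v)=f(v')$, whereas the paper rederives that reciprocity directly from discrete Stokes' Theorem~\ref{th:stokes} and the derivation property of Theorem~\ref{th:derivation}; it is the same computation.
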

\begin{proof}
We write $D=D_0-D_{\infty}$, where $D_0,D_{\infty}\geq 0$. Since $D$ is admissible, $D_0$ is a sum of elements of $V(\Diamond)$, all coefficients being one. Let $V_0$ denote the set of $Q\in V(\Diamond)$ such that $D_0\geq Q$.

For each $Q \in V(\Diamond)$, we fix a chart $z_Q$. As in Proposition~\ref{prop:basis}, we denote the normalized Abelian differentials of the first kind by $\omega_k^B$ and $\omega_k^W$, $k=1,\ldots,g$, these of the second kind by $\omega_Q$, $Q \in V(\Diamond)$, and these of the third kind by $\omega_{bb'}$ and $\omega_{ww'}$, $b'\neq b$ being black and $w'\neq w$ being white vertices, $b \in V(\Gamma)$ and $w \in V(\Gamma^*)$ fixed. Now, we investigate the image $H$ of the discrete exterior derivative $d$ on functions in $L(-D)$. $H$ consists of discrete Abelian differentials and only biconstant functions are in the kernel.

Let $f\in L(-D)$. Then, $df$ is a discrete Abelian differential that might have double poles at the points of $D_0$. In addition, all the residues and periods of $df$ vanish. So since the discrete Abelian differentials above form a basis by Proposition~\ref{prop:basis}, \[df=\sum_{Q\in V_0} f_Q \omega_Q\] for some complex numbers $f_Q$. Now, all black and white $b$-periods of $df$ vanish. Using Proposition~\ref{prop:periods_second} and the remark at the end of Section~\ref{sec:Abelian_differentials} on the black and white $b$-periods of $\omega_Q$, \[\sum_{Q\in V_0} f_Q\alpha_k^B(Q)=0=\sum_{Q\in V_0} f_Q\alpha_k^W(Q),\] where $\omega_k^B|_{\partial F_Q}=\alpha_k^B(Q) dz_Q$ and $\omega_k^W|_{\partial F_Q}=\alpha_k^W(Q) dz_Q$ for $k=1,\ldots,g$.

In the chart $z_{Q'}$ of a face $Q'\neq Q$, $\omega_{Q}$ can be written as $\beta_{Q}(Q')dz_{Q'}$. So if $D_{\infty}\geq 2P$, $P\in V(\Diamond)$, then $f$ has a double value at $P$ and $df|_{\partial F_P}=0$ for the corresponding face $F_P$ of $X$. Due to Lemma~\ref{lem:property_second}, \[0=\sum_{Q\in V_0} f_Q\beta_Q(P)=\sum_{Q\in V_0} f_Q\beta_P(Q).\]

Suppose that $D_{\infty}\geq v+v'$, where $v,v' \in V(\Gamma)$ or $v,v' \in V(\Gamma^*)$. By definition, $f$ has zeroes at $v,v'$, so $f(v)=f(v')$. The last equality remains true when a biconstant function is added. This yields an additional restriction to $H$. Now, using discrete Stokes' Theorem~\ref{th:stokes}, $d\omega_{vv'}$ equals $2\pi i$ when integrated over $F_v$, $-2\pi i$ when integrated over $F_{v'}$, and zero around all other vertices. Also, it follows that $\iint_{F(X)} d\left(f\omega_{vv'}\right)=0$. Writing $\omega_{vv'}|_{\partial{F_Q}}=\gamma_{vv'}(Q) dz_Q$, we observe that for $Q\in V(\Diamond)$ such that $D_0\geq Q$,
 \[\left(df\wedge\omega_{vv'}\right)|_{F_Q}=f_Q\gamma_{vv'}(Q)\frac{\pi}{2\textnormal{area}(z_Q(F_Q))}dz_Q\wedge d\bar{z}_Q,\] and $df\wedge\omega_{vv'}=0$ everywhere else. Using $d\left(f\omega_{vv'}\right)=fd\omega_{vv'}+df\wedge\omega_{vv'}$ by Theorem~\ref{th:derivation}, we obtain \[0=2\pi i\left( f(v)-f(v')\right)=\iint_{F(X)} fd\omega_{vv'}=\iint_{F(X)} d\left(f\omega_{vv'}\right)-\iint_{F(X)} df\wedge\omega_{vv'}=2\pi i\sum_{Q\in V_0}f_Q\gamma_{vv'}(Q).\]

In the case that there are more than two black (or white) vertices $v$ that satisfy $D_{\infty}\geq v$, we fix one such black (or white) vertex as $b$ (or $w$). Denote by $B_0$ and $W_0$ the sets of these black and white vertices. Then, $f$ is constant on $B_0$ and $W_0$ if and only if for any $b' \in B_0$, $b'\neq b$, and $w'\in W_0$, $w'\neq w$: \[\sum_{Q\in V_0}f_Q\gamma_{bb'}(Q)=0=\sum_{Q\in V_0}f_Q\gamma_{ww'}(Q).\] 

Consider the pmatrix $M$ whose $k$-th column is the column vector $\left(\alpha_k^B(Q)\right)_{Q \in V_0}$, whose $(g+k)$-th column is the column vector $\left(\alpha_k^W(Q)\right)_{Q \in V_0}$, and whose next columns are the column vectors $\left(\beta_P (Q)\right)_{Q \in V_0}$, $P\in V(\Diamond)$ such that $D_\infty\geq 2P$. In the case that $|B_0|\geq 2$, we add the column vectors $\left(\gamma_{bb'} (Q)\right)_{Q \in V_0}$, $b'\in B_0$ different from $b$, to $M$, and if $|W_0|\geq 2$, then we additionally add the column vectors $\left(\gamma_{ww'} (Q)\right)_{Q \in V_0}$, $w'\in W_0$ different from $w$. By our consideration above, $\sum_{Q\in V_0} f_Q \omega_Q$ is in $H$ only if the column vector $(f_Q)_{Q \in V_0}$ is in the kernel of $M^T$. Conversely, any element of the kernel is a closed discrete differential with vanishing periods, so it can be integrated to a discrete meromorphic function $f$ using Lemma~\ref{lem:multivalued}. $f$ can have poles only at $Q\in V_0$, it is biconstant on $B_0 \cup W_0$, and it has double values at all $P \in V(\Diamond)$ such that $D_{\infty} \geq 2P$. Hence, $H$ is isomorphic to the kernel of $M^T$. We obtain \[\textnormal{dim } H=\textnormal{dim } \textnormal{ker } M^T=|V_0|-\textnormal{rank } M^T=\deg D_0-\textnormal{rank } M.\]

Let us first suppose that $B_0$ and $W_0$ are both nonempty. This means that at least one zero of $f$ at a black and one at a white vertex is fixed. Therefore, $d:L(-D)\to H$ has a trivial kernel and $l(-D)=\dim H$. In addition, $\textnormal{rank } M=2g+\deg D_{\infty}-2-\textnormal{dim }\textnormal{ker } M.$ But with complex numbers $\lambda_j$, the kernel of $M$ consists of discrete Abelian differentials \[\omega=\sum_{k=1}^g \left(\lambda_k \omega_k^B+\lambda_{k+g} \omega_k^W\right)+\sum_{P: D_\infty\geq 2P} \lambda_P \omega_P+\sum_{b'\in B_0, b'\neq b}\lambda_{b'}\omega_{bb'}+\sum_{w'\in W_0, w'\neq w}\lambda_{w'}\omega_{ww'}\] such that $\omega|_{\partial {F_Q}}=0$ for any $Q \in V_0$, so the kernel is exactly $H(D)$. It follows that \begin{align*} l(-D)=\textnormal{dim } H&=\deg D_0-\textnormal{rank } M\\&=\deg D_0-2g-\deg D_{\infty}+2+\textnormal{dim }\textnormal{ker } M=\deg D-2g+2+i(D).\end{align*}

If $B_0$ or $W_0$ is empty, then we can add an additive constant to all values of $f$ at black or white vertices, respectively, still getting an element of $L(-D)$. Thus, the kernel of $d:L(-D)\to H$ is one- or even two-dimensional, when both $B_0$ and $W_0$ are empty. But $\textnormal{rank } M$ is now $2g+\deg D_{\infty}-x-\textnormal{dim }\textnormal{ker } M$ with $x=1$ or $x=0$. Again, we get $l(-D)=\deg D-2g+2+i(D).$
\end{proof}

\begin{remark}
The difference between the classical and the discrete Riemann-Roch theorem is explained by the fact that $\Lambda$ is bipartite: The space of constant functions is no longer one- but two-dimensional; instead of just $g$ $a$-periods of Abelian differentials we have $2g$, namely black and white.

Furthermore, note that the interpretation of several neighboring double values (or poles) as a multiple value (or pole) of higher order is compatible with the discrete Riemann-Roch theorem.
\end{remark}

Let us just state the following corollary for a quadrangulated flat torus that was already mentioned in \cite{BoSk12}. The proof is a consequence of the discrete Riemann-Roch Theorem~\ref{th:Riemann_Roch} and the fact that $dz$ is a discrete holomorphic differential on the torus. For the second part, one uses the decomposition of a function into real and imaginary part. For details, see the thesis \cite{Gue14}.

\begin{corollary}\label{cor:torus_poles}
Let $\Sigma=\mC/(\mZ+\mZ\tau)$ be bipartitely quadrangulated and $\im \tau >0$.
\begin{enumerate}
\item There exists no discrete meromorphic function with exactly one simple pole.
\item Suppose that in addition the diagonals of all quadrilaterals are orthogonal to each other. Then, there exists a discrete meromorphic function with exactly two simple poles at $Q,Q' \in V(\Diamond)$ if and only if the black diagonals of $Q,Q'$ are parallel to each other.
\end{enumerate}
\end{corollary}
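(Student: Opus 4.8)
The plan is to derive both statements from the discrete Riemann-Roch Theorem~\ref{th:Riemann_Roch} with $g=1$, using that the global coordinate differential $dz$ is a nowhere-vanishing discrete holomorphic differential on the torus and that the space $\mathcal{H}$ of discrete holomorphic differentials has complex dimension $2g=2$ by Corollary~\ref{cor:dimension}. Throughout I fix for each $Q$ the canonical parallelogram chart of Proposition~\ref{prop:charts} and write $\omega|_{\partial F_Q}=p_Q(\omega)\,dz_Q$ for $\omega\in\mathcal{H}$; evaluating $dz$ on the black medial edge gives $p_Q(dz)=\tfrac12\big(z(b_+)-z(b_-)\big)$, i.e. $p_Q(dz)$ is half the oriented black diagonal of $Q$, so that two faces have parallel black diagonals exactly when $\im\big(p_Q(dz)\,\overline{p_{Q'}(dz)}\big)=0$. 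Since $f$ has a simple pole at $Q$ precisely when it fails to be discrete holomorphic there, a function whose poles lie in a set $S\subseteq V(\Diamond)$ is exactly an element of $L(-D)$ with $D=\sum_{Q\in S}Q$, and $i(D)=\dim\{\omega\in\mathcal{H}:\omega|_{\partial F_Q}=0\text{ for all }Q\in S\}$.

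For part (i) I apply Riemann-Roch to the admissible divisor $D=Q_0$, obtaining $l(-Q_0)=\deg D-2g+2+i(Q_0)=1+i(Q_0)$. The functional $\omega\mapsto p_{Q_0}(\omega)$ on $\mathcal{H}$ is nonzero because $p_{Q_0}(dz)\neq0$, so its kernel $H(Q_0)$ has dimension at most $1$ and $i(Q_0)\le 1$. On the other hand the two-complex-dimensional space of biconstant functions lies in $L(-Q_0)$, forcing $l(-Q_0)\ge 2$. Hence $i(Q_0)=1$ and $l(-Q_0)=2$, so $L(-Q_0)$ is exactly the space of biconstants; since these have no poles, no discrete meromorphic function has exactly one simple pole.

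For part (ii) I apply Riemann-Roch to $D=Q+Q'$, so $l(-(Q+Q'))=2+i(Q+Q')$ with $i(Q+Q')\in\{0,1\}$ (again $\le 1$ since $p_Q(dz)\neq0$). If $i=0$ then $L(-(Q+Q'))$ is just the biconstants and no function has poles; if $i=1$ there is a non-biconstant $f\in L(-(Q+Q'))$, which must have at least one pole (otherwise $df\in\mathcal{H}$ and $f$ is biconstant by discrete Liouville's Theorem~\ref{th:Liouville}) and, by part (i), not exactly one, hence poles at both $Q$ and $Q'$. Thus a function with exactly two simple poles at $Q,Q'$ exists if and only if there is a nonzero $\omega\in\mathcal{H}$ vanishing at both $Q$ and $Q'$.

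It remains to characterize this condition geometrically, and here I use that all $\rho_Q$ are real. The key lemma is that $C\colon\sum_Q p_Q\,dz_Q\mapsto\sum_Q\overline{p_Q}\,dz_Q$ preserves $\mathcal{H}$: writing the closedness relation $\oint_{\partial F_v}\omega=0$ for holomorphic $\omega$ in the canonical charts, $\partial F_v$ runs along white medial edges when $v$ is black and along black ones when $v$ is white, and evaluating $dz_Q$ on these edges yields, for real $\rho_Q$, equations $\sum_j\varepsilon_j\rho_{Q_j}p_{Q_j}=0$ at black $v$ and $\sum_j\varepsilon'_j p_{Q_j}=0$ at white $v$ with real coefficients $\varepsilon_j,\varepsilon'_j\in\{\pm1\}$; these are preserved under conjugating the $p_{Q_j}$, so $C\omega$ is again closed and of holomorphic type. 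Consequently $\eta:=C(dz)$, with $p_Q(\eta)=\overline{p_Q(dz)}$, is discrete holomorphic, and it is independent of $dz$ because the black diagonals of a genuine quadrangulation of the torus cannot all be parallel (else $\Gamma$ degenerates into a union of parallel cycles). Hence $\{dz,\eta\}$ is a basis of $\mathcal{H}$, and a nonzero $\omega=a\,dz+b\,\eta$ vanishes at $Q$ and $Q'$ iff $\det\bigl(\begin{smallmatrix}p_Q(dz)&\overline{p_Q(dz)}\\ p_{Q'}(dz)&\overline{p_{Q'}(dz)}\end{smallmatrix}\bigr)=0$, i.e. iff $\im\big(p_Q(dz)\,\overline{p_{Q'}(dz)}\big)=0$, which is exactly parallelism of the black diagonals. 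I expect the main obstacle to be this real-structure lemma together with the explicit identification $p_Q(dz)=\tfrac12(z(b_+)-z(b_-))$: one must track the orientation signs and the values of $dz_Q$ on the black and white medial edges carefully enough to see that real $\rho_Q$ makes the vertex closedness equations real, and to confirm that the degenerate case of globally parallel black diagonals is genuinely excluded on a torus.
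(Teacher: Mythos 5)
Your proof is correct and follows essentially the route the paper itself sketches (discrete Riemann--Roch applied to the admissible divisors $Q_0$ resp.\ $Q+Q'$, together with the fact that $dz$ is discrete holomorphic on the torus); in particular, your conjugation lemma is exactly the differential-level form of the paper's hint to ``decompose a function into real and imaginary part'': for real $\rho_Q$ and $df|_{\partial F_Q}=p\,dz_Q$ in the canonical charts one checks $\tfrac12\left(df+C(df)\right)=dh$, where $h=\re f$ on $V(\Gamma)$ and $h=i\im f$ on $V(\Gamma^*)$. The one informal point --- excluding that all black diagonals are parallel --- closes more cleanly than via your parenthetical remark about $\Gamma$ degenerating: twice the integrals of $dz$ over the black chains $B\alpha'_1$ and $B\beta'_1$ are sums of black diagonal vectors and equal the lattice periods $1$ and $\tau$, which are not collinear since $\im\tau>0$, so the black diagonals cannot all share a direction.
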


The first part of Corollary~\ref{cor:torus_poles} does not remain true if we consider general discrete Riemann surfaces: 

\begin{proposition}\label{prop:counterexample_cauchykernel}
For any $g\geq 0$, there exists a compact discrete Riemann surface of genus $g$ such that there exists a discrete meromorphic function $f:V(\Lambda)\to\mC$ that has exactly one simple pole.
\end{proposition}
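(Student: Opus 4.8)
The plan is to recast the claim and reduce it to a local construction plus a gluing. By the definitions of this section, a discrete meromorphic $f\colon V(\Lambda)\to\mC$ with exactly one simple pole is nothing but a function that is discrete holomorphic at every quadrilateral except one face $Q_0$, at which $df|_{\partial F_{Q_0}}$ acquires a nonzero $d\bar z_{Q_0}$--part. The first ingredient I would isolate is a rigidity lemma on a quad--disk $D$: if $f$ is discrete holomorphic at all faces of $D$ and vanishes on $\partial D$, then $f\equiv 0$. This follows from a discrete Green identity: since $\star df=-i\,df$ by Lemma~\ref{lem:Hodge_projection} and hence $d\star df=-i\,ddf=0$ by Proposition~\ref{prop:dd0}, discrete Stokes' Theorem~\ref{th:stokes} gives $\langle df,df\rangle=\oint_{\partial D}\bar f\,\star df=0$, and positivity of $\langle\cdot,\cdot\rangle$ forces $df=0$, whence $f\equiv 0$. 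The useful consequence is the contrapositive: any \emph{nonzero} function on $D$ that vanishes on $\partial D$ and is discrete holomorphic at every face except $Q_0$ must have a genuine simple pole at $Q_0$, since otherwise it would be holomorphic everywhere and thus vanish.

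For $g=0$ the statement already follows from discrete Riemann--Roch: applying Theorem~\ref{th:Riemann_Roch} to the admissible divisor $D=Q_0$ and using $\mathcal H=0$ (Corollary~\ref{cor:dimension}), so $i(Q_0)=0$, yields $l(-Q_0)=\deg Q_0-2g+2+i(Q_0)=3$. The biconstant functions span a two-dimensional subspace of $L(-Q_0)$, and any third independent element is non-biconstant, has its only admissible pole at $Q_0$, and cannot be globally holomorphic by discrete Liouville's Theorem~\ref{th:Liouville}; hence it has exactly one simple pole.

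For $g\geq 1$ the plan is a surgery that adds handles away from the pole. Suppose I have produced a quad--disk $D$ carrying a discrete holomorphic function that vanishes on $\partial D$ and has a single simple pole at an interior face $Q_0$ (its existence is the point discussed below). I would then glue to $\partial D$, edge--to--edge, any bipartite quad--decomposition $P$ of a genus--$g$ surface with one boundary circle matching $\partial D$, equip $P$ with an arbitrary discrete complex structure, and extend $f$ by $0$ on $P$. No face of the glued complex straddles the seam, so each face is an original face of $D$ or of $P$; the $D$--faces are holomorphic by construction and the $P$--faces because $f\equiv 0$ there, while the boundary values match. Thus $f$ is well defined and discrete holomorphic at every face except $Q_0$, and an Euler--characteristic count gives genus $0+g=g$, producing the desired compact discrete Riemann surface and function.

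The main obstacle is the construction of the disk $D$. I would take $D$ with four boundary vertices, for which a short count shows the number of faces exceeds the number of interior vertices by one. By the rigidity lemma the full system (holomorphicity at \emph{all} faces with zero boundary data) has only the trivial solution, so it has full column rank and a one--dimensional space of linear relations among its equations. Deleting the holomorphicity equation at $Q_0$ then produces a nonzero solution space --- hence the sought single--pole function --- precisely when this unique relation does not involve the $Q_0$--equation; dually, the relation is carried by the (essentially unique) discrete holomorphic differential $\eta$ attached to $D$, and the condition becomes $\eta|_{\partial F_{Q_0}}=0$. This is a single equation on the discrete complex structure that, consistently with Corollary~\ref{cor:torus_poles}(i), cannot be met by real weights $\rho_Q$. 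I expect to satisfy it with genuinely complex weights, most cleanly by building $D$ symmetric under the $180^\circ$ rotation fixing the center of $Q_0$ with an invariant discrete complex structure: then $\eta$ is an eigenform of the rotation, and when it lands in the invariant eigenspace it is forced to vanish at the fixed face $Q_0$, exactly as invariant holomorphic differentials vanish at a fixed point in the smooth theory. Verifying that $\eta$ occupies the invariant eigenspace for an admissible choice of weights --- equivalently, that the relevant determinant in the $\rho_Q$ vanishes --- is the technical heart of the argument.
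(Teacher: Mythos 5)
Your proposal contains a genuine gap at exactly the point you flag as ``the technical heart'': the existence of the quad-disk $D$ carrying a function that vanishes on $\partial D$ and has a single simple pole at an interior face is never established, and without it the surgery for $g\geq 1$ does not get off the ground. (Your $g=0$ argument via Theorem~\ref{th:Riemann_Roch} is complete and correct --- $l(-Q_0)=3$ with $i(Q_0)=0$, so a non-biconstant element of $L(-Q_0)$ exists and must have its one pole at $Q_0$ by Theorem~\ref{th:Liouville} --- but, as you note, for $g\geq 1$ Riemann--Roch alone only says such a function exists iff \emph{all} discrete holomorphic differentials vanish at $Q_0$, which is exactly what must be engineered.) Moreover, the heuristic guiding your search is wrong: you claim, appealing to Corollary~\ref{cor:torus_poles}(i), that the vanishing condition $\eta|_{\partial F_{Q_0}}=0$ ``cannot be met by real weights.'' Corollary~\ref{cor:torus_poles}(i) concerns a \emph{flat} quadrangulated torus, where $dz$ is globally defined; it does not obstruct general real-weight structures, and in fact the paper's construction works for arbitrary weights with positive real part, real ones included. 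So the symmetry/eigenform analysis and the determinant condition in the $\rho_Q$ that you defer are not only unverified but unnecessary.

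The paper resolves the issue with an explicit local stamp that is precisely a realization of the disk you are looking for ($5$ faces, $4$ interior vertices, $4$ boundary vertices, matching your count $F=I+1$): starting from \emph{any} compact discrete Riemann surface of genus $g$, one quadrilateral is combinatorially replaced by five (Figure~\ref{fig:stamp}), the central one $Q$ getting weight $\varrho_1$ and the four outer ones weight $\varrho_2$, with $\re\varrho_1,\re\varrho_2>0$ otherwise arbitrary. The function $f(b_-)=x=-f(b_+)$, $f(w_+)=i\varrho_2 x=-f(w_-)$, $f\equiv 0$ elsewhere, satisfies the discrete Cauchy--Riemann equation at each outer quadrilateral (each has exactly one inner black and one inner white vertex, and the signs match for every $\varrho_2$), while at $Q$ holomorphicity would force $2i\varrho_2x=-2i\varrho_1x$, i.e.\ $\varrho_2=-\varrho_1$, impossible since both real parts are positive; hence $df$ has a nonzero $d\bar z_Q$-part at $Q$ and nowhere else, giving exactly one simple pole. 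In the language of your rank argument: for this combinatorial choice the one-dimensional space of relations automatically avoids the $Q_0$-equation for \emph{every} admissible choice of weights, so no tuning of the discrete complex structure is needed. Your rigidity lemma and the extension-by-zero gluing are sound (modulo routine checks of bipartiteness and strong regularity along the seam, which the paper's replace-one-quadrilateral surgery handles automatically), but as submitted the proof is incomplete for all $g\geq 1$.
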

\begin{proof}
We start with any compact discrete Riemann surface $(\Sigma',\Lambda',z')$ of genus $g$ and pick one quadrilateral $Q'\in V(\Diamond')$. Now, $Q'$ is combinatorially replaced by the five quadrilaterals of Figure~\ref{fig:stamp}. We define the discrete complex structure of the central quadrilateral $Q$ by the complex number $\varrho_1$ and the discrete complex structure of the four neighboring quadrilaterals $Q_k$ by $\varrho_2$, $\re (\varrho_k)>0$. Clearly, this construction yields a new compact discrete Riemann surface $(\Sigma,\Lambda,z)$ of genus $g$. 

\begin{figure}[htbp]
\begin{center}
\beginpgfgraphicnamed{spider4}
\begin{tikzpicture}
[white/.style={circle,draw=black,fill=white,thin,inner sep=0pt,minimum size=1.2mm},
black/.style={circle,draw=black,fill=black,thin,inner sep=0pt,minimum size=1.2mm},scale=0.7]
\foreach \x in {-2,-1}
		{
		\ifthenelse{\isodd{\x}}{\node[black] (p_\x_1)  at (\x,\x) {}; \node[white] (p_\x_2)  at (\x,-\x) {}; }{\node[white] (p_\x_1)   at (\x,\x) {}; \node[black] (p_\x_2)  at (\x,-\x) {};}
		\draw (p_\x_1) -- (p_\x_2);
		}
\foreach \x in {1,2}
		{
		\ifthenelse{\isodd{\x}}{\node[black] (p_\x_1)  at (\x,\x) {}; \node[white] (p_\x_2) at (\x,-\x) {}; }{\node[white] (p_\x_1)  at (\x,\x) {}; \node[black] (p_\x_2)  at (\x,-\x) {};}
		\draw (p_\x_1) -- (p_\x_2);
		}
\foreach \x in {-2,-1,1,2}
		{\pgfmathparse{int(multiply(\x,-1))};
		\draw (p_\x_1) -- (p_\pgfmathresult_2);
		}
\foreach \x in {1}
		{\pgfmathparse{int(add(-\x,-1))};
		\draw (p_-\x_1) -- (p_\pgfmathresult_1);
		\draw (p_-\x_2) -- (p_\pgfmathresult_2);
		\pgfmathparse{int(add(\x,1))};
		\draw (p_\x_1) -- (p_\pgfmathresult_1);
		\draw (p_\x_2) -- (p_\pgfmathresult_2);
		}
\coordinate[label=center:$Q$] (phi0) at (0,0);
\coordinate[label=center:$Q_3$] (phi1) at (0,1.5);
\coordinate[label=center:$Q_2$] (phi2) at (1.5,0);
\coordinate[label=center:$Q_1$] (phi3) at (0,-1.5);
\coordinate[label=center:$Q_4$] (phi4) at (-1.5,0);
\coordinate[label=center:$w_-$] (phi5) at (0.63,-0.7);
\coordinate[label=center:$w_+$] (phi6) at (-0.61,0.64);
\coordinate[label=center:$b_-$] (phi7) at (-0.56,-0.64);
\coordinate[label=center:$b_+$] (phi8) at (0.7,0.7);

\end{tikzpicture}
\endpgfgraphicnamed
\caption{Replacement of chosen quadrilateral by five new quadrilaterals}
\label{fig:stamp}
\end{center}
\end{figure}
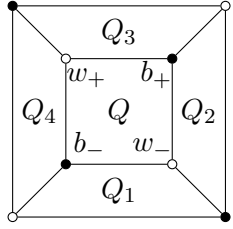

For a complex number $x\neq0$, consider the function $f:V(\Lambda)\to\mathds{C}$ that fulfills $f(b_-)=x=-f(b_+)$, $f(w_+)=i\varrho_2 x=-f(w_-)$, and $f(v)=0$ for all other vertices. Then, $f$ is a discrete meromorphic function that has exactly one simple pole, namely at $Q$.
\end{proof}


\subsection{Discrete Abel-Jacobi maps}\label{sec:Abel}

Due to the fact that black and white periods of discrete holomorphic one-forms do not have to coincide, we cannot define a discrete Abel-Jacobi map on all of $V(\Lambda)$ and $V(\Diamond)$. However, by either restricting to black vertices (and faces) or white vertices (and faces) or considering the universal covering of the compact discrete Riemann surface $(\Sigma,\Lambda,z)$ of genus $g$, we get reasonable discretizations of the Abel-Jacobi map.

\begin{definition}
Let $\omega$ denote the column vector with entries $\omega_k$, $\{\omega_1,\ldots,\omega_g\}$ being the canonical set of discrete holomorphic differentials. The $g\times g$-matrices $\Pi^B$ and $\Pi^W$ with entries $\Pi^B_{jk}:=2\int_{Bb_j}\omega_k$ and $\Pi^W_{jk}:=2\int_{Wb_j}\omega_k$ are called the \textit{black} and \textit{white period pmatrix}, respectively. Let $L$ denote the lattice $L:=\left\{Im+\Pi n | m,n \in \mathds{Z}^g\right\}$, where $I$ is the $(g\times g)$-identity pmatrix. Similarly, the lattices $L^B$ and $L^W$ with $\Pi^B$ and $\Pi^W$ instead of $\Pi$ are defined. Then, the complex tori $\mathcal{J}:=\mathds{C}^g/L$, $\mathcal{J}^B:=\mathds{C}^g/L^B$, and $\mathcal{J}^W:=\mathds{C}^g/L^W$ are the \textit{discrete}, the \textit{black}, and the \textit{white Jacobian variety}, respectively.
\end{definition}
\begin{remark}
In the notation of Section~\ref{sec:period_matrices}, $\Pi^B=\Pi^{B,W}+\Pi^{B,B}$ and $\Pi^W=\Pi^{W,W}+\Pi^{W,B}$.
\end{remark}

\begin{definition}
Let $\tilde{Q},{\tilde{Q}'}\in F(\tilde{\Lambda})$, $v\in V(\tilde{\Gamma})$, $v' \in V(\tilde{\Gamma}^*)$. Let $R$ be an oriented path on $\tilde{\Gamma}$ connecting a black vertex $b\sim\tilde{Q}$ with $v$, and let $d$ be an edge of $\tilde{X}$ parallel to the black diagonal of $\tilde{Q}$ oriented toward $b$. Lifting the discrete differentials of $\omega$ to the universal covering $(\tilde{\Sigma},\tilde{\Lambda},z \circ p)$, \[\tilde{\mathcal{A}}_{\tilde{Q}}(v):=\tilde{\mathcal{A}}^B_{\tilde{Q}}(v):=\int_{\tilde{Q}}^v \omega:=\int_d \omega + \int_{R} \omega.\] Similarly, we define $\tilde{\mathcal{A}}_{\tilde{Q}}(v'):=\tilde{\mathcal{A}}^W_{\tilde{Q}}(v'):=\int_{\tilde{Q}}^{v'} \omega$ by replacing the graph $\tilde{\Gamma}$ by $\tilde{\Gamma}^*$. Furthermore, we define $\tilde{\mathcal{A}}^B_{\tilde{Q}}({\tilde{Q}'}):=\tilde{\mathcal{A}}^B_{\tilde{Q}}(b)-\tilde{\mathcal{A}}^B_{\tilde{Q}'}(b)$ and $\tilde{\mathcal{A}}^W_{\tilde{Q}}({\tilde{Q}'}):=\tilde{\mathcal{A}}^W_{\tilde{Q}}(w)-\tilde{\mathcal{A}}^W_{\tilde{Q}'}(w)$ for a white vertex $w$ incident to $\tilde{Q}$. 
\end{definition}

\begin{remark}
Since all discrete differentials $\omega_k$ are closed, the above definitions do not depend on the choice of paths. Furthermore, $\tilde{\mathcal{A}}^B_{\tilde{Q}}: V(\tilde{\Gamma}) \cup F(\tilde{\Lambda}) \to \mathds{C}^g$ and $\tilde{\mathcal{A}}^W_{\tilde{Q}}: V(\tilde{\Gamma}^*) \cup F(\tilde{\Lambda}) \to \mathds{C}^g$ actually project to well-defined maps $\mathcal{A}^B_Q : V(\Gamma) \cup V(\Diamond) \to \mathcal{J}^B$ and $\mathcal{A}^W_Q : V(\Gamma^*) \cup V(\Diamond) \to \mathcal{J}^W$ for $Q:=p(\tilde{Q})$. These \textit{black} and \textit{white Abel-Jacobi maps} discretize the Abel-Jacobi map at least for divisors that do not include white or black vertices, respectively. Clearly, they do not depend on the base point $Q$ for divisors of degree 0.
\end{remark}

$\tilde{Q}$ can be connected with another $\tilde{Q}'\in F(\tilde{\Lambda})$ in a more symmetric way that does not depend on a choice of either black or white, using the medial graph.

\begin{definition}
Let $\tilde{Q},{\tilde{Q}'}\in F(\tilde{\Lambda})$. Let $x$ be a vertex of the face $F_{\tilde{Q}}\in F(\tilde{X})$ corresponding to $\tilde{Q}$ and $e,e'$ the two oriented edges of $F_{\tilde{Q}}$ pointing to $x$. We lift the discrete differentials of $\omega$ to the universal covering $(\tilde{\Sigma},\tilde{\Lambda},z \circ p)$. Defining $\int_{\tilde{Q}}^x \omega:=\int_e \omega/2 + \int_{e'}\omega/2$ and similarly $\int_{{\tilde{Q}'}}^{x'} \omega$ for a vertex $x'$ of $F_{\tilde{Q}'}$, \[\tilde{\mathcal{A}}_{\tilde{Q}}({\tilde{Q}'}):=\int_{\tilde{Q}}^{{\tilde{Q}'}} \omega:=\int_{\tilde{Q}}^x \omega+\int_x^{x'} \omega-\int_{{\tilde{Q}'}}^{x'} \omega.\]
\end{definition}
\begin{remark}
$\tilde{\mathcal{A}}_{\tilde{Q}}({\tilde{Q}'})$ is well-defined and does not depend on $x,x'$. In Figure~\ref{fig:contours2}, we described how a closed path on the medial graph induces closed paths on the black and white subgraph. Similarly, a ``path'' connecting $\tilde{Q}$ with ${\tilde{Q}'}$ as above induces two other paths connecting both faces, a black path just using edges of $\tilde{\Gamma}$ and a white path just using edges of $\tilde{\Gamma}^*$ (and half of a diagonal each). This construction shows that $2\tilde{\mathcal{A}}_{\tilde{Q}}({\tilde{Q}'})=\tilde{\mathcal{A}}^B_{\tilde{Q}}({\tilde{Q}'})+\tilde{\mathcal{A}}^W_{\tilde{Q}}({\tilde{Q}'})$.

Thus, $\tilde{\mathcal{A}}_{\tilde{Q}}$ defines a \textit{discrete Abel-Jacobi map} on the divisors of the universal covering $(\tilde{\Sigma},\tilde{\Lambda},z \circ p)$ and it does not depend on the choice of base point $\tilde{Q}$ for divisors of degree 0 that contain as many black as white vertices (counted with sign).
\end{remark}

\begin{proposition}\label{prop:Abel_holomorphic}
$\tilde{\mathcal{A}}_{\tilde{Q}}|_{V(\tilde{\Lambda})}$ is discrete holomorphic in each component.
\end{proposition}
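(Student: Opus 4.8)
The plan is to verify the discrete Cauchy--Riemann equation for each component $\tilde{\mathcal{A}}_k$ of $\tilde{\mathcal{A}}_{\tilde{Q}}$ on every quadrilateral of $\tilde{\Lambda}$, where $\tilde{\mathcal{A}}_k$ is the component built from the canonical discrete holomorphic differential $\omega_k$, $k=1,\ldots,g$. First I would observe that $\tilde{\mathcal{A}}_{\tilde{Q}}|_{V(\tilde{\Lambda})}$ is simply $\tilde{\mathcal{A}}^B_{\tilde{Q}}$ on black vertices and $\tilde{\mathcal{A}}^W_{\tilde{Q}}$ on white vertices, with no clash since black vertices receive only black values and white only white. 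Hence it suffices to fix a quadrilateral $\tilde{P}\in F(\tilde{\Lambda})$ with black vertices $b_-,b_+$ and white vertices $w_-,w_+$ in counterclockwise order, compute the two increments $\tilde{\mathcal{A}}_k(b_+)-\tilde{\mathcal{A}}_k(b_-)$ and $\tilde{\mathcal{A}}_k(w_+)-\tilde{\mathcal{A}}_k(w_-)$, and show that each, divided by the corresponding increment of the chart $z_{\tilde{P}}$, yields the same complex number.

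Since every $\omega_k$ is closed, the defining integrals are path-independent, so I would exploit this freedom to choose convenient representatives. Taking the base black vertex in the definition of $\tilde{\mathcal{A}}^B_{\tilde{Q}}$ to be $b_-$ and the connecting path to be the black diagonal of $\tilde{P}$ from $b_-$ to $b_+$, the common half-diagonal contribution cancels in the difference, leaving $\tilde{\mathcal{A}}_k(b_+)-\tilde{\mathcal{A}}_k(b_-)=\int_{b_-b_+}\omega_k$, the integral along the black diagonal. By the definition $\int_R\omega=2\int_{R_X}\omega$ this equals $2\int_e\omega_k$, where $e$ is the edge of $\tilde{X}$ parallel to the black diagonal. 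An entirely parallel computation, using $\tilde{\Gamma}^*$ and the white diagonal, gives $\tilde{\mathcal{A}}_k(w_+)-\tilde{\mathcal{A}}_k(w_-)=2\int_{e^*}\omega_k$ for the edge $e^*$ parallel to the white diagonal.

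Now I would invoke discrete holomorphicity: since $\omega_k$ is a discrete holomorphic differential of type $\Diamond$, it has the representation $\omega_k=p_k\,dz_{\tilde{P}}$ in the chart $z_{\tilde{P}}$ for a single constant $p_k$, so $\int_e\omega_k=p_k z_{\tilde{P}}(e)$ and likewise for $e^*$. Combining this with the medial-graph identity $2z_{\tilde{P}}(e)=z_{\tilde{P}}(b_+)-z_{\tilde{P}}(b_-)$ (and $2z_{\tilde{P}}(e^*)=z_{\tilde{P}}(w_+)-z_{\tilde{P}}(w_-)$), the factor of two from the period convention cancels the factor $\tfrac12$ of the half-diagonal, giving
\[\tilde{\mathcal{A}}_k(b_+)-\tilde{\mathcal{A}}_k(b_-)=p_k\bigl(z_{\tilde{P}}(b_+)-z_{\tilde{P}}(b_-)\bigr),\qquad \tilde{\mathcal{A}}_k(w_+)-\tilde{\mathcal{A}}_k(w_-)=p_k\bigl(z_{\tilde{P}}(w_+)-z_{\tilde{P}}(w_-)\bigr).\]
Dividing, both discrete difference quotients equal $p_k$, which is exactly the discrete Cauchy--Riemann equation for $\tilde{\mathcal{A}}_k$ on $\tilde{P}$. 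As $\tilde{P}$ and $k$ were arbitrary, each component is discrete holomorphic.

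I expect the only genuine subtlety to be bookkeeping rather than anything structural: one must track the factor of two from $\int_R\omega=2\int_{R_X}\omega$ against the factor $\tfrac12$ relating an $\tilde{X}$-edge to a full diagonal, and must check that the orientations of $e$, $e^*$ and of the diagonals $b_-b_+$, $w_-w_+$ are chosen consistently with the counterclockwise order so that no stray sign survives. Once these conventions are pinned down, the entire argument is local and lifts verbatim to the universal cover, since the $\omega_k$ are closed there and the charts $z\circ p$ restrict to the local charts $z_{\tilde{P}}$.
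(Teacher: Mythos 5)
Your proof is correct and is essentially the paper's own argument: the paper likewise fixes a quadrilateral $\tilde{Q}'$, uses that $\omega_k|_{\partial F_{Q'}}=p_k\,dz_{Q'}$ for a discrete holomorphic differential, and reads off that both increments $\tilde{\mathcal{A}}_{\tilde{Q}}(\tilde{b}_+)-\tilde{\mathcal{A}}_{\tilde{Q}}(\tilde{b}_-)$ and $\tilde{\mathcal{A}}_{\tilde{Q}}(\tilde{w}_+)-\tilde{\mathcal{A}}_{\tilde{Q}}(\tilde{w}_-)$ equal $p_k$ times the corresponding chart increments, which is the discrete Cauchy--Riemann equation. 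The only difference is presentational: you spell out the path-independence from closedness and the cancellation of the factor $2$ from $\int_R\omega=2\int_{R_X}\omega$ against the half-diagonal identity $2z(e)=z(v'_+)-z(v'_-)$, which the paper leaves implicit.
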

\begin{proof}
Let $\tilde{Q}' \in F(\tilde{\Lambda})$ and $z_{Q'}$ be a chart of $Q'=p(\tilde{Q}')$. Then, $\omega_k|_{\partial F_{Q'}}= p_kdz_{Q'}$ for some complex numbers $p_k$. If $\tilde{b}_-,\tilde{w}_-,\tilde{b}_+,\tilde{w}_+$ denote the vertices of $Q$ in counterclockwise order, starting with a black vertex, then \begin{align*}\left(\tilde{\mathcal{A}}_{\tilde{Q}}\left(\tilde{b}_+\right)-\tilde{\mathcal{A}}_{\tilde{Q}}\left(\tilde{b}_-\right)\right)_k&=p_k\left(z_{Q'}\left(p\left(\tilde{b}_+\right)\right)-z_{Q'}\left(p\left(\tilde{b}_-\right)\right)\right),\\ \left(\tilde{\mathcal{A}}_{\tilde{Q}}\left(\tilde{w}_+\right)-\tilde{\mathcal{A}}_{\tilde{Q}}\left(\tilde{w}_-\right)\right)_k&=p_k\left(z_{Q'}\left(p\left(\tilde{w}_+\right)\right)-z_{Q'}\left(\left(\tilde{w}_-\right)\right)\right).\end{align*} Thus, the discrete Cauchy-Riemann equation is fulfilled.
\end{proof}
\begin{remark}
In such a chart $z_{\tilde{Q}'}=z_{Q'} \circ p$, $\left(\partial_\Lambda \tilde{\mathcal{A}}_{\tilde{Q}}\right)\left(\tilde{Q}'\right)=p,$ exactly as in the smooth case. In particular, the discrete Abel-Jacobi map is an injection unless there is $Q \in V(\Diamond)$ such that all discrete holomorphic differentials vanish at $Q$. By the discrete Riemann-Roch Theorem~\ref{th:Riemann_Roch}, this would imply that there exists a discrete meromorphic function with exactly one simple pole at $Q$. In contrast to the classical theory, this could happen for any genus $g$ due to Proposition~\ref{prop:counterexample_cauchykernel}.
\end{remark}

\section*{Acknowledgment}

The authors would like to thank the anonymous reviewer for his comments and suggestions, in particular for his idea of Figure~\ref{fig:cover}.

The first author was partially supported by the DFG Collaborative Research Center TRR 109, ``Discretization in Geometry and Dynamics''. The research of the second author was supported by the Deutsche Telekom Stiftung. Some parts of this paper were written at the Institut des Hautes \'Etudes Scientifiques in Bures-sur-Yvette, the Isaac Newton Institute for Mathematical Sciences in Cambridge, and the Erwin Schr\"odinger International Institute for Mathematical Physics in Vienna. The second author thanks the European Post-Doctoral Institute for Mathematical Sciences for the opportunity to stay at these institutes. The stay at the Isaac Newton Institute for Mathematical Sciences was funded through an Engineering and Physical Sciences Research Council Visiting Fellowship, Grant EP/K032208/1.


\bibliographystyle{plain}
\bibliography{Discrete_Riemann_surfaces}

\end{document}